\numberwithin{equation}{section}
\renewcommand{\theequation}{\arabic{section}.\arabic{equation}}
\def\L{\mathcal L}
\def\M{\mathcal M}
\def\R{\Bbb R}
\def\I{\mathcal I}
\def\D{\mathcal D}
\def\P{\mathcal P}
\def\K{\mathcal K}
\def\Q{\mathcal Q}
\def\T{\mathcal T}
\newtheorem{Proposition}{Proposition}[section]
\newtheorem{Theorem}[Proposition]{Theorem}
\newtheorem{Lemma}[Proposition]{Lemma}
\newtheorem{Corollary}[Proposition]{Corollary}
\newtheorem{Remark}[Proposition]{Remark}
\newtheorem{Example}[Proposition]{Example}
\newtheorem{Definition}[Proposition]{Definition}
\title{Strong maximum principle for  generalized solutions to   equations of the Monge-Amp\`ere type}
\thanks{This work was supported by NSFC   12141103.}
\begin{document}

	%
	
	
	\bibliographystyle{plain}
	
	%
	\maketitle

	\baselineskip=15.8pt
	\parskip=3pt

	\centerline { Huaiyu Jian\ \  and \ \ Xushan Tu}
	
	%
	%
	%
	%

	\vskip 15pt
	
	\begin{abstract}		
		
		In this paper, we investigate the strong maximum principle for generalized solutions of Monge-Amp\`ere type equations. We prove that the strong maximum principle holds at points where the function is strictly convex but not necessarily $C^{1,1}$ smooth, and show that it fails at non-strictly convex points. The results we obtain can be applied to various Minkowski type problems in convex geometry by the virtue  of the Gauss image map.
	\end{abstract}
	
	\vskip 15pt
	
	\noindent {\bf AMS Mathematics Subject Classification}:  35B50, 35J96. 

	\vskip 15pt
	
	\noindent {\bf  Keywords}:  Strong maximum principle, Monge-Amp\`ere  equation, Strict convexity, Uniqueness.
	
	\vskip20pt

	\baselineskip=15.8pt
	\parskip=3pt

	%

	\tableofcontents

	\baselineskip=15.8pt
	\parskip=3.0pt

	\newpage
	\section{Introduction}

	The Strong Maximum Principle by Hopf \cite{[HE]} is a fundamental theorem in the study of partial differential equations.  The most general version can be found in Gilbarg and Trudinger's monograph \cite[Theorem 3.5]{[GT]}, which states that any non-constant $C^2$ subsolution $u$ of a uniformly elliptic equation $a^{ij} D_{ij} u +b_iD_iu = 0 $ does not attain its maximum value in the interior.
	
	Naturally, we pose a similar question for the Monge-Amp\`ere equation. Let $u $ and $v$ be two convex solutions of
	\begin{equation}\label{eq:ma measure f}
		\det D^2 u=\det D^2 v =f(x) ,  \quad 0 < \lambda \leq   f(x)  \leq \Lambda.
	\end{equation}
Throughout this paper, we assume that  $\lambda $  and $ \Lambda$ are two positive constants.
	 Suppose that $u-v$ attains a local maximum or minimum, is it true that $u-v$ is a constant?  Note that
	\[  \int_{0}^{1}A^{ij,t}dt  \cdot D_{ij}(u-v)=\det D^2 u -\det D^2 v =0,\]
	where $A^{ij,t}$ are co-factors of $tD^2u+(1-t)D^2v$ for each $t$. When $u$ and $v$ are both strict convex, and $ f \in C_{loc}^{\alpha}$,   Caffarelli  \cite{[C1],[C2]} obtained the Schauder regularity of $u$ and $v$; see \cite{[JW07]} for a simplified proof. In this case, the corresponding linearized operator is uniformly elliptic, and the classical strong maximum principle implies that $u - v$ is a constant. On the other hand, Wang \cite{[W95]} constructed a class of strict convex functions  $u\notin C^{1,1}$, whose $\det D^2 u$ are   bounded and positive. Therefore, the linearized operator is generally not uniformly elliptic, and the strong maximum principle for the generalized solutions of the Monge-Amp\`ere equation remains open.
	
	This paper aims to study the strong maximum principle for generalized convex solutions $u$ (See Definition \ref{def:solution 01}) of Monge-Amp\`ere type equations.  Our focus of study is the set of non-strictly convex points of $u$. We say that {\sl a point  $x_0$ is a strictly convex point of $u$ if
		\begin{equation}\label{eq:def:strict convex}
			u\left( \frac{x+x_0}{2}\right) <  \frac{u(x)+u(x_0)}{2} \text{ for all } x \neq x_0,
		\end{equation}
		and let  $\Sigma_u $ as the set of non-strictly convex points.  }	
	Assuming that $0<\lambda <\det D^2 u <\Lambda$ in $\Omega$,  then $\Sigma_u  $ is a closed set and $u\in C_{loc}^{1,\alpha}(\Omega \setminus \Sigma_u )$. In this case, $\Sigma_u$ is the union of all such convex sets $E $  that $u$ is linear on $E$, all extremal points of $E$ lie on $\partial \Omega$, and $1\leq \dim E < \frac{n}{2}$; see Caffarelli \cite{[C1],[C2],[C3],[C5]}.
	Moreover, if $u$ is the Brenier solution in optimal transportation, i.e., $	\det D^2 u(x)= \frac{f(x)}{g\left( \nabla u(x) \right)} $ for bounded positive functions $f$ and $g$, and both $\Omega$ and $\nabla u(\Omega)$ are convex sets, then $\Sigma_u=\emptyset$; see \cite{[BRE],[C6]}.
	
	 	 Note that the key to proving the classical strong maximum principle for uniformly elliptic equations is Hopf's lemma, which relies on constructing suitable auxiliary functions on the outer balls of the coincidence sets.  For the Monge-Amp\`ere equation, we will replace the outer balls with sections and use a deformation of the original function to construct new auxiliary functions. These sections and functions usually reflect how the corresponding linearized equations change near the strict convex points when the Monge-Amp\`ere measure satisfies the doubling condition.
	 	
	 	 On the other hand, for any fixed line segment $L$ in $\Sigma_u$, we will study the deformation of $u$ along the direction of $L$ and construct convex solutions $v$ with  $\det D^2v= \det D^2 u$ such that $u$ touches $v$ from above on $L$, which also shows the necessity of avoiding non-strictly convex points

	\begin{Theorem}\label{thm:smp nece}
		Suppose that $u$ is a convex function, and $0 < \lambda \leq \det D^2 u \leq \Lambda$ on $\Omega$. For each segment $L$ in $\Sigma_u$ and every $x_0 \in \mathring{L}$, there is a convex function $v \not\equiv u$ defined around $x_0$, touches $u$ from below at $x_0$ and satisfies $\det D^2 v = \det D^2 u$.
	\end{Theorem}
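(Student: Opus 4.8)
The plan is to realize $v$ as the Monge--Amp\`ere solution on a small ball about $x_0$ whose boundary values coincide with those of an explicit Pogorelov-type subsolution, and then trap it between that subsolution and $u$ via the comparison principle. First I would normalize: subtracting from $u$ a supporting affine function along $L$ and applying an affine change of coordinates changes neither $\det D^2u$, nor the set of strict convexity points, nor any conclusion of the theorem (the linear map rescales $\det D^2u$ by a constant, harmless once the same map is applied to the $v$ we build). So I may assume $x_0=0\in\mathring L$, that the maximal convex set $E\subseteq\Sigma_u$ on which $u$ is affine and which contains $L$ (given by Caffarelli's structure theorem, with $d:=\dim E$ and $1\le d<n/2$) lies in $\{x''=0\}$, where $x=(x'',x''')\in\mathbb R^{n-d}\times\mathbb R^{d}$, and that $u\ge 0$ on $\Omega$ with $u\equiv 0$ on $E$.

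Next comes the barrier, which is the crux. Using $\lambda\le\det D^2u\le\Lambda$ together with Aleksandrov's maximum principle and John's lemma (the standard geometry of sections), the sections of $u$ are thin transverse to $E$ at the sharp rate, and this yields a lower bound
\[
u(x)\ \ge\ c_u\,|x''|^{\beta},\qquad \beta:=2-\tfrac{2d}{n},
\]
for $x$ in a fixed neighbourhood of $E$ near $x_0$, with $c_u>0$; note $\beta>1$ precisely because $d<n/2$. With this $\beta$ I set, for constants $c_1>0$ small and $K>0$ large,
\[
\underline v(x)\ :=\ c_1\,|x''|^{\beta}\bigl(1+K|x'''|^{2}\bigr).
\]
The exponent $\beta=2-2d/n$ is exactly the one for which the $|x''|$-homogeneity cancels in $\det D^2\bigl(|x''|^{\beta}g(x''')\bigr)$, and a direct Hessian computation — using $1<\beta<2$ to get convexity of $|x''|^{\beta}$ and positivity of the relevant factor — shows that, once $K$ is large and the radius $\rho\lesssim K^{-1/2}$ is small, $\underline v$ is convex on $\Omega':=B_\rho(x_0)$ with $\det D^2\underline v\ge\Lambda$ there. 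Since $1+K|x'''|^{2}\le 2$ on $\Omega'$, we have $\underline v\le 2c_1|x''|^{\beta}$, so choosing $c_1$ small in terms of $c_u$ gives $\underline v\le u$ on $\partial\Omega'$, with $\underline v<u$ off $E$ and $\underline v\equiv 0\equiv u$ on $E\cap\partial\Omega'$; in particular $\underline v(x_0)=0=u(x_0)$, and $\underline v\not\equiv u$ on $\partial\Omega'$.

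Finally I would let $v$ be the generalized solution of $\det D^2v=\det D^2u=:f$ in $\Omega'$ with $v=\underline v$ on $\partial\Omega'$ (existence and uniqueness are classical for a ball with continuous boundary data). Applying the comparison principle twice gives $\underline v\le v\le u$ on $\Omega'$: from $\det D^2v=f=\det D^2u$ and $v=\underline v\le u$ on $\partial\Omega'$ one gets $v\le u$, while from $\det D^2\underline v\ge\Lambda\ge f=\det D^2v$ and $\underline v=v$ on $\partial\Omega'$ one gets $\underline v\le v$. Evaluating at $x_0$ and using $\underline v(x_0)=0=u(x_0)$ forces $v(x_0)=u(x_0)$, so $v$ is convex, defined around $x_0$, satisfies $\det D^2v=\det D^2u$, touches $u$ from below at $x_0$, and is $\not\equiv u$ because $v=\underline v<u$ somewhere on $\partial\Omega'$; undoing the normalization finishes the proof. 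The main difficulty is the barrier step: one needs a convex competitor that is flat on $L$ near $x_0$, stays below $u$ there, and still has Monge--Amp\`ere measure $\ge\Lambda$ — conflicting demands that can be met only because the transverse growth exponent $\beta=2-2d/n$ lies strictly between $1$ and $2$, i.e.\ only because $\dim E<n/2$; the same computation degenerates when $d\ge n/2$, consistent with the validity of the strong maximum principle at strict convexity points.
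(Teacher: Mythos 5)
Your overall architecture (build a convex competitor that lies below $u$ near $x_0$ and vanishes on the degenerate set, solve the Dirichlet problem $\det D^2v=\det D^2u$ with that competitor as boundary data, then sandwich $v$ between the competitor and $u$ by two applications of the comparison principle) is sound, and your Hessian computation for $\underline v=c_1|x''|^{\beta}\bigl(1+K|x'''|^2\bigr)$ with $\beta=2-2d/n$ is correct: the Schur-complement calculation gives $\det D^2\underline v\approx c\,c_1^nK^d$ on $\{|x'''|\lesssim K^{-1/2}\}$, so the order of choices $c_1$, then $K$, then $\rho$ works. The gap is the barrier-admissibility estimate you take for granted: the pointwise lower bound $u(x)\ge c_u|x''|^{\beta}$ near $x_0$. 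This does not follow from ``Aleksandrov's maximum principle and John's lemma.'' What those tools give (via the estimate $\M u(A)\,|A|\le C\inf_\ell\|u-\ell\|_{L^\infty(A)}^n$ applied to $A=\{u<h\}\cap B_r(x_0)$, together with $\M u\ge\lambda\,|A|$) is $|A|\le Ch^{n/2}$, i.e.\ control of the \emph{volume} of the sublevel sets, hence only of the geometric mean of the $n-d$ transverse extents. A bound of the form $u\ge c_u\,\mathrm{dist}(\cdot,\{x''=0\})^{2-2d/n}$ requires that \emph{every} transverse extent of $\{u<h\}$ be $\lesssim h^{n/(2(n-d))}$, which is a strictly stronger, anisotropy-excluding statement; nothing in the two-sided bound $\lambda\le\det D^2u\le\Lambda$ plus the standard section estimates rules out, say for $n=3$, $d=1$, transverse extents like $h^{0.9}$ and $h^{0.6}$ (product still $\lesssim h^{3/2}$), in which case $u$ grows \emph{slower} than $|x''|^{4/3}$ along one transverse direction and your inequality $\underline v\le u$ on $\partial\Omega'$ fails no matter how small $c_1$ is (the failure occurs at points where both sides are small, so you cannot fix it by compactness, and any attempt to prove the growth bound by a barrier on $B_r(x_0)$ is circular, since near $E\cap\partial B_r(x_0)$ you would need the same unproved bound at a neighboring point of the segment). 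So as written the crux of your proof rests on an unproved, and to my knowledge unknown, sharp growth estimate.

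For contrast, the paper avoids any quantitative growth information by manufacturing the subsolution out of $u$ itself: with $\T x=\tfrac12x+Mx_ne_n$ (squeeze transversally, stretch along the segment) it sets $w=\frac{u+\delta\,u\circ\T}{1+\delta}$, which is automatically convex, satisfies $w\le\frac{2+\delta}{2+2\delta}u\le u$ on $B_{c/M}(x_0)$ because $u(\T x)\le\tfrac12u(x)+\tfrac12u(2Mx_ne_n)=\tfrac12u(x)$ there, vanishes on the segment, and has $\det D^2w\ge\det D^2u$ by the Minkowski determinant inequality once $M$ is large (since $\det D^2(u\circ\T)\ge cM^2\lambda$). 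Solving the Dirichlet problem with boundary data $w$ and comparing twice then gives $w\le v\le u$ with $v=u=0$ on the segment near $x_0$ and $v<u$ on part of the boundary. If you want to salvage your explicit-barrier route, you would have to either prove the directionally uniform growth estimate $u\ge c_u|x''|^{2-2d/n}$ (a nontrivial result in its own right) or replace $\underline v$ by a competitor built from $u$ as in the paper.
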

	
	We now recall the results regarding the size of the non-strictly convex set $\Sigma_u$.  	If $n=2$,   any convex solutions to $  \det D^2u \geq   \lambda >0 $ are always strictly convex; see \cite{[C3]}. When $n \geq 3$, Mooney \cite{[Mo1]} proved that,  {\sl if $0<\lambda \leq \det D^2u \leq \Lambda $, then the  $(n-1)$-dimensional Hausdorff measure $\mathcal{H}^{n-1}(\Sigma_u )=0$, and the complement of the closed set $\Sigma_u$ is connected.} We then prove the sufficiency of avoiding $\Sigma_u$; please also refer to Remark \ref{rem:example explanation} for further understanding.
	\begin{Theorem}\label{thm:smp 0}
		Suppose that $u  \geq v $, both $u$ and $v$ are generalized solutions to \eqref{eq:ma measure f}. If $u \not \equiv v$,  then $   u$  can touch $v$  only  on $\Sigma_u \cap \Sigma_v$.
	\end{Theorem}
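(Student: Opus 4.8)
The plan is to prove the contrapositive: if $x_0\in\Omega$ is a point at which $u$ touches $v$ (so $u\ge v$ on $\Omega$ and $u(x_0)=v(x_0)$) and $x_0\notin\Sigma_u\cap\Sigma_v$, then $u\equiv v$.

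\emph{Step 1: reduction and normalization.} It suffices to treat the case $x_0\notin\Sigma_u$. Indeed, if $x_0\notin\Sigma_v$ but $x_0\in\Sigma_u$, then by the structure of $\Sigma_u$ recalled above, $x_0$ lies in the relative interior of a segment $[a,b]\subseteq\Sigma_u$ (with $a,b\in\Omega$ after shrinking) on which $u$ equals an affine function $\ell$; since $v\le u=\ell$ on $[a,b]$, $v(x_0)=\ell(x_0)$ and $v$ is convex, equality in Jensen's inequality forces $v=\ell$ on $[a,b]$, whence $x_0\in\Sigma_v$, a contradiction. So in every case $x_0\notin\Sigma_u$. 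Choosing $p\in\partial v(x_0)$ we have $p\in\partial u(x_0)$, because $u\ge v$ and $u(x_0)=v(x_0)$; subtracting $x\mapsto u(x_0)+p\cdot(x-x_0)$ from both functions and translating, we may assume $x_0=0$, $u,v\ge0$ near $0$, $u(0)=v(0)=0$, and $0\in\partial u(0)\cap\partial v(0)$.

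\emph{Step 2: strong minimum principle at $0$.} Put $w=u-v\ge0$, so $w(0)=0$. Since $\Sigma_u$ is closed and $0\notin\Sigma_u$, $u$ is strictly convex near $0$; by Caffarelli's theory the sections $S_u(0,t)=\{u<t\}$ are, for small $t$, convex bodies with $\overline{S_u(0,t)}\Subset\Omega$, comparable to ellipsoids, $\mu_u=\det D^2u\,dx$ is doubling with respect to them, and $u\in C^{1,\alpha}_{loc}$ there. Concavity of $\det^{1/n}$ on positive symmetric matrices, together with $\det D^2u=\det D^2v=f$, yields $\L_uw:=a^{ij}(D^2u)\,\partial_{ij}w\le(\det D^2u)^{1/n}-(\det D^2v)^{1/n}=0$, where $a^{ij}(D^2u)=\tfrac1n(\det D^2u)^{\frac1n-1}\operatorname{cof}(D^2u)^{ij}$ is positive definite with $\det(a^{ij})=n^{-n}$; thus $w$ is a nonnegative supersolution of the linearized Monge--Amp\`ere operator $\L_u$. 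This operator is in general \emph{not} uniformly elliptic (Wang's example), but the doubling of $\mu_u$ places $\L_u$ in the scope of the affine-invariant Harnack inequality of Caffarelli--Guti\'errez; its weak form for supersolutions, applied on $S_u(0,r)$, bounds an $L^p$-average of $w$ over $S_u(0,r)$ by $C\inf_{S_u(0,r/2)}w$, which vanishes since $w(0)=0$. Hence $w\equiv0$ on $S_u(0,r)$, i.e. $u\equiv v$ near $0$. (As $u,v$ need not be $C^2$, one runs this on a slightly smaller section using smooth approximations $u_k,v_k$ with the same boundary values and $\det D^2u_k=\det D^2v_k=f_k\to f$, $\lambda\le f_k\le\Lambda$; the comparison principle preserves $u_k\ge v_k$, the Harnack constants are uniform, and one passes to the limit. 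Equivalently, matching the strategy announced in the introduction, one argues à la Hopf: if $w\not\equiv0$ near $0$, pick a section $S_u(z,h)$ with $\overline{S_u(z,h)}\cap\{w=0\}$ a single point $y_0\in\partial S_u(z,h)$, build from a dilation of $u$ a subsolution $\psi$ of $\L_u$ on the section-annulus $S_u(z,h)\setminus S_u(z,\theta h)$ vanishing on $\partial S_u(z,h)$ with $\partial_\nu\psi(y_0)<0$, compare $w\ge\varepsilon\psi$, and note that $u\in C^1$ near $y_0$ together with the interior minimum of $w$ at $y_0$ forces $\nabla v(y_0)=\nabla u(y_0)$, hence $\partial_\nu w(y_0)=0$, contradicting $\partial_\nu w(y_0)\le\varepsilon\partial_\nu\psi(y_0)<0$.)

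\emph{Step 3: propagation.} By Mooney's results, $N:=\Omega\setminus\Sigma_u$ is open, connected, and dense in $\Omega$ (the last since $\mathcal H^{n-1}(\Sigma_u)=0$). The set $\{w=0\}\cap N$ is relatively closed in $N$ by continuity, nonempty since $0\in\{w=0\}\cap N$, and relatively open in $N$ by Step 2: every point of it is a strictly convex point of $u$ at which $u$ touches $v$, hence has a neighborhood on which $w=0$. Connectedness of $N$ forces $\{w=0\}\supseteq N$, and then density of $N$ with continuity of $w$ gives $w\equiv0$ on $\Omega$, i.e. $u\equiv v$, contradicting $u\not\equiv v$. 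This proves the theorem. The main obstacle is Step 2 — the strong minimum principle for $w$ at a point where $u$ is strictly convex but possibly not $C^{1,1}$, where $\L_u$ degenerates: everything rests either on adapting the Caffarelli--Guti\'errez Harnack inequality to supersolutions and to merely generalized $u,v$, or, in the self-contained route, on constructing an explicit Hopf barrier on the sections of $u$, which is exactly where the doubling property of $\mu_u$ and a rescaling/deformation of $u$ enter.
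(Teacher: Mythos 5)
Your Steps 1 and 3 are sound: the reduction to a touching point with $x_0\notin\Sigma_u$ (via the affine-propagation argument on a segment of $\Sigma_u$) and the clopen/connectedness propagation using Mooney's results both work, and the latter is in the same spirit as what the paper needs. The problem is Step 2, which is the entire content of the theorem, and there you have a genuine gap rather than a proof. Your main route treats $w=u-v$ as a nonnegative supersolution of the linearized operator $\L_u$ and invokes a Caffarelli--Guti\'errez weak Harnack inequality. But $u$ and $v$ are only Aleksandrov solutions with $\lambda\le f\le\Lambda$; at a strictly convex point they are $C^{1,\alpha}$ and at best $W^{2,1+\epsilon}$, not $W^{2,n}$ and not $C^{1,1}$ (Wang's examples), so the concavity inequality $a^{ij}(D^2u)D_{ij}w\le0$ holds only pointwise a.e., which is not a supersolution notion to which ABP-type or Harnack-type machinery applies; and a viscosity formulation fails because the coefficients $\operatorname{cof}(D^2u)$ are only defined a.e. and are degenerate. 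Your approximation patch does not close this: to apply the classical CG estimates to $w_k=u_k-v_k$ you need $u_k$ (and $v_k$) to be $C^2$, hence strictly convex, inside the section, but the Dirichlet solutions with boundary data $u|_{\partial S}$, $v|_{\partial S}$ need not be strictly convex in the interior --- Caffarelli's theorem excluding interior affine sets requires pointwise $C^{1,\alpha}$ boundary data with $\alpha>1-\tfrac2n$, which the restriction of a $C^{1,\alpha}$ solution to a section boundary does not supply. The parenthetical ``Hopf barrier'' variant suffers from the same defect, since the comparison $w\ge\varepsilon\psi$ on a section-annulus again presupposes a maximum principle for the degenerate, non-smooth operator $\L_u$. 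In short, the step you yourself flag as ``the main obstacle'' is exactly the open point that the theorem is about, and it is not established by what you wrote.

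For contrast, the paper's proof of this step never linearizes. At the touching point (arranged, via the engulfing property, to be a boundary touching point of an internal section of $v$ contained in the non-coincidence set), it builds the barrier as a genuinely convex function: $w=\frac{v+\delta\Phi+\frac{c_2\delta}{4}x_n}{1+\delta}$ with $\Phi(x)=v\left(\tfrac12x+Mx_ne_n\right)$ a stretched copy of $v$. The Minkowski (arithmetic--geometric) inequality for Monge--Amp\`ere measures gives $\det D^2w\ge(1+cM^{2/n}\delta)f>\det D^2u$ in the Aleksandrov sense, so the comparison principle for generalized solutions (Lemma \ref{lem:comparison principle 01}) --- which, unlike the linearized comparison, is valid for merely convex functions --- yields $u\ge w$ on the set $U=\{v<c_2x_n\}$ after checking the boundary inequality with the quantity $\sigma(s)$; the Hopf-type contradiction is then read off from the gradients at the touching point, where $u$, $v$, $w$ are $C^1$ by strict convexity. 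If you want to salvage your argument, the fix is precisely this substitution: make the barrier itself convex with Monge--Amp\`ere measure strictly exceeding $f$, and compare it with $u$ via the Aleksandrov comparison principle, rather than comparing $u-v$ with anything through $\L_u$.
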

	
	We further consider the strong maximum principle for the following equation
	\begin{equation}\label{monge ampere measure}
		\det D^2 u(x)=f(x) \cdot\frac{ F\left(x, u\right) }{G\left(\nabla u, u^* \circ \nabla u\right)}\text{ in } \Omega,
	\end{equation}
	where $u^*$ is the Legendre transformation of $u$ (see Section 2 for its definition),   $\Omega$ is a connected open set,  and $F$ and $G$ satisfy   
	\begin{equation}\label{eq:polynomial measure 1}
		0<\lambda  \leq F 	 \leq \Lambda, \quad	0< \lambda \leq  G \leq \Lambda, \quad 	F \text{ and } G \text{ are locally Lipschitz.}
	\end{equation}
	We first deal with the non-degenerate case, assuming that
	\begin{equation}\label{eq:polynomial measure 0}
		0<\lambda  \leq f 	 \leq \Lambda.
	\end{equation}
	Note that since convex functions are $C^1$ almost everywhere, the term  $\frac{1}{G(\nabla u, u^* \circ \nabla u)}$ is well-defined as $L_{loc}^{\infty}$ functions.

	\begin{Definition}\label{def:solution 01}
		Assume the assumption \eqref{eq:polynomial measure 1} holds. A convex function $u$ is a generalized (Aleksandrov) solution to \eqref{monge ampere measure}  if
		\[\M u =f(x) \cdot\frac{ F\left(x, u\right) }{G\left(\nabla u, u^* \circ \nabla u\right)}dx.\]
	\end{Definition}
	Whenever we write an equation or inequality involving $\det D^2 u$, we assume it holds in the sense of measure (Aleksandrov) by regarding $\det D^2u$ as $\M u$, the monge-Amp\`ere measure (see Section 2 for its definition), and the terms on both sides as measures. Here, for two given measures $\mu$ and $\nu$, the inequality $\mu \geq \nu$ means that $\mu-\nu$ is a non-negative measure. Similar to Theorem \ref{thm:smp 0}, we have
		\begin{Theorem}\label{thm:smp 1}
		Suppose that $u  \geq v $, both $u$ and $v$ are generalized solutions to \eqref{monge ampere measure}. Assume that  assumptions \eqref{eq:polynomial measure 1} and \eqref{eq:polynomial measure 0} hold. If $u \not \equiv v$,  then $   u$  can  touch $v$  only on $\Sigma_u \cap \Sigma_v$.
	\end{Theorem}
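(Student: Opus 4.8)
The plan is to reduce the assertion to a local statement at a strictly convex touching point and then to run the section--deformation scheme behind Theorem~\ref{thm:smp 0}, carrying the factor $F/G$ along as a vanishing perturbation of the Monge--Amp\`ere density. Suppose, for contradiction, that $u\not\equiv v$ yet $u$ touches $v$ at some $x_0\notin\Sigma_u\cap\Sigma_v$. It suffices to treat the case $x_0\notin\Sigma_u$: if instead $x_0\notin\Sigma_v$, then, after subtracting the supporting plane of $v$ at $x_0$ so that $0\le v\le u$ and $v(x_0)=u(x_0)=0$, strict convexity of $v$ forces $v>0$ on a punctured neighbourhood of $x_0$; but if $x_0\in\Sigma_u$ then by Caffarelli's theory $u$ is affine on a segment through $x_0$, say $u(x_0+se)=as$ near $s=0$, and $as=u(x_0+se)\ge v(x_0+se)>0$ for all small $s\neq0$ is impossible, so $x_0\notin\Sigma_u$ too. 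Assume henceforth $x_0\notin\Sigma_u$. Since $\lambda^{2}/\Lambda\le\det D^2u\le\Lambda^{2}/\lambda$ by \eqref{eq:polynomial measure 1} and \eqref{eq:polynomial measure 0}, Caffarelli's regularity applies: $u$ is strictly convex and $C^{1,\alpha}$ near $x_0$, and the sections $S_t:=S_t(u,x_0)$ are compactly contained in $\Omega$ for small $t$ and shrink to $\{x_0\}$. As $v\le u$, $v(x_0)=u(x_0)$ and $u$ is differentiable at $x_0$, the supporting plane of $u$ at $x_0$ also supports $v$ there; subtracting it and translating, assume $x_0=0$, $u(0)=v(0)=0$, $\nabla u(0)=\nabla v(0)=0$, $0\le v\le u$.

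Next set $h(x):=f(x)\,F(x,u(x))/G(\nabla u(x),u^{*}(\nabla u(x)))$, so $\det D^2u=h$ with $\lambda^{2}/\Lambda\le h\le\Lambda^{2}/\lambda$. On $S_t$ one has $0\le u-v\le t$, while $|\nabla u|\to0$ and $|\nabla v|\to0$ on $S_t$ as $t\to0$ --- for $v$ because a convex function squeezed between $0$ and a $C^{1,\alpha}$ function vanishing to first order at $0$ is locally Lipschitz with constant $\to0$ --- and therefore $u^{*}(\nabla u(x))=\nabla u(x)\cdot x-u(x)\to0$ and $v^{*}(\nabla v(x))=\nabla v(x)\cdot x-v(x)\to0$ on $S_t$. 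Feeding these estimates into the local Lipschitz bounds of \eqref{eq:polynomial measure 1} gives, on $S_t$,
\[\det D^2v=f(x)\,\frac{F(x,v)}{G(\nabla v,v^{*}\!\circ\nabla v)}=\bigl(1+\omega(t)\bigr)\,h(x),\qquad\omega(t)\to0\ \text{as }t\to0,\]
so on every small section of $u$ at $0$ the densities of $\M u$ and $\M v$ are multiplicatively within $1+\omega(t)$ of the common density $h$.

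Finally I would reproduce, on $S_t$ with $t$ small, the deformation argument proving Theorem~\ref{thm:smp 0}: deform the solution along the section to build an auxiliary convex $\psi$ with $\det D^2\psi\ge\det D^2u$ in $S_t$ and $\psi\le u$ on $\partial S_t$, but with $\psi(0)>0=u(0)$ unless $u\equiv v$ on $S_t$; the Aleksandrov comparison principle then yields $\psi\le u$ in $S_t$, hence the contradiction $0<\psi(0)\le u(0)=0$, so $u\equiv v$ on $S_t$. The only new feature relative to Theorem~\ref{thm:smp 0} is that in \eqref{monge ampere measure} the density carries the factor $F/G$ evaluated along the auxiliary function, which differs from the relevant density only by the multiplicative error $1+O(\omega(t))$ found above; this is absorbed by shrinking $t$, exactly as lower-order terms are in the classical Hopf lemma, and \eqref{eq:polynomial measure 1} is precisely what legitimises the absorption. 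Propagating $u\equiv v$ from a neighbourhood of each touching point outside $\Sigma_u\cap\Sigma_v$ via the connectedness argument of Theorem~\ref{thm:smp 0} (recall $\Omega$ is connected) gives $u\equiv v$ on $\Omega$, contradicting $u\not\equiv v$. The main obstacle I anticipate is exactly this deformation step --- verifying that the subsolution construction behind Theorem~\ref{thm:smp 0} is quantitatively stable under a $1+O(\omega(t))$ perturbation of the Monge--Amp\`ere density --- together with the subsidiary gradient and Legendre-transform estimates needed to produce $\omega(t)\to0$ when $v$ is merely convex.
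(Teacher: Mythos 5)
Your preliminary reductions are sound: the localization at a touching point, the argument (via Caffarelli's structure of $\Sigma_u$, cf.\ Lemma \ref{lem:Pogorelov Line}) that the touching point may be assumed outside $\Sigma_u$, and the estimate that on shrinking sections the two densities agree up to a factor $1+\omega(t)$ are all correct and play the role of Lemma \ref{lem:measure constrain double equiv} and of \eqref{eq:smp p-1} in the paper. The genuine gap is that the core of the proof is left as a black box and is described in a way that does not match any workable scheme. The paper does not produce a value contradiction $\psi(x_0)>u(x_0)$ on a section of $u$ centered at the touching point; it sets up a Hopf-type configuration: sections of $v$ based at a point $Y$ \emph{outside} the coincidence set are slid until they touch $\partial\Omega_0$ at a single point, the comparison is run on the region $U=\{v<c_2x_n\}$ lying to one side of the coincidence set, the barrier is the explicit deformation $w=\frac{v+\delta\Phi+\frac{c_2\delta}{4}x_n}{1+\delta}$ with $\Phi(x)=v\left(\tfrac12x+Mx_ne_n\right)$, and the contradiction is a gradient (Hopf) inequality at the boundary touching point. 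On a section $S_t^u(x_0)$ centered at the touching point the coincidence set may occupy part of the section (see the examples in Appendix \ref{chp:a}), and you give no construction of a convex $\psi$ with $\det D^2\psi\ge\det D^2u$ in $S_t$, $\psi\le u$ on $\partial S_t$ and $\psi(x_0)>u(x_0)$; producing such a barrier is exactly what the proof must supply.

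The second, and for this theorem the decisive, gap is the treatment of the solution-dependent right-hand side, which you yourself flag as ``the main obstacle''. Because $G$ is evaluated along the gradient and Legendre transform of the comparison function itself, one cannot invoke the Aleksandrov comparison principle against a fixed density, and the error is not a harmless multiplicative $1+\omega(t)$ to be ``absorbed by shrinking $t$'': in the paper the gain from the deformation is $c\,M^{2/n}\delta$ and the competing error must be shown to be of size $C\delta$, i.e.\ proportional to the perturbation parameter, uniformly as $\delta\to0$. This is achieved by Lemma \ref{lem:legendre t 1}: on the bad set $V=\{u<w\}$ one has $\partial w(V)\subset\partial u(V)$ and $|w^*\circ\nabla w-u^*\circ\nabla u|\le C\delta$, and the conclusion $V=\emptyset$ is then obtained by integrating $G$ over the gradient images $\nabla w(V)\subset\nabla u(V)$ and chaining the resulting inequalities, not by a pointwise comparison. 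None of this mechanism appears in your proposal, so the argument as written does not close; supplying the exterior-section setup and the Legendre-transform/integral comparison is precisely the content of the paper's proof of Theorem \ref{thm:smp 1}.
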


	\begin{Theorem}\label{thm:smp 3}
		Suppose that $u  \geq v $, both $u$ and $v$ are  $C^1$ and strictly convex.  Assume that  assumptions \eqref{eq:polynomial measure 1} and \eqref{eq:polynomial measure 0} hold, and
		\[     \det D^2 u(x) \leq f(x) \cdot \frac{F\left(x, u\right) }{G\left(\nabla u, u^* \circ \nabla u\right)}\text{ in } \Omega \]
		and
		\[  \det D^2 v(x) \geq f(x) \cdot \frac{F\left(x, v\right)}{G\left(\nabla v, v^* \circ \nabla v\right)}\text{ in } \Omega \]
		in the sense of mesure (Aleksandrov).  Then, either $u > v$ or $u\equiv v$.
	\end{Theorem}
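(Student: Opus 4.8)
The plan is to show that the contact set $\{x\in\Omega:u(x)=v(x)\}$ is open; it is closed by continuity of $u-v$, so once openness is known, connectedness of $\Omega$ forces it to be either empty (the case $u>v$) or all of $\Omega$ (the case $u\equiv v$). Fix a contact point $x_0$. Since $u-v\ge 0$ has an interior minimum at $x_0$ and $u,v\in C^1$, we have $\nabla u(x_0)=\nabla v(x_0)$; subtracting the common supporting affine function of $u$ and $v$ at $x_0$ and translating, we may assume $x_0=0$, $u(0)=v(0)=0$ and $\nabla u(0)=\nabla v(0)=0$, so $u\ge v\ge 0$ near $0$ and, by strict convexity of $v$, the sublevel sets $S_h:=\{v<h\}$ are for small $h>0$ bounded convex neighbourhoods of $0$ that shrink to $\{0\}$. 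It then suffices to prove $u\equiv v$ on $S_h$ for some small $h$.

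The analytic heart is that the genuinely nonlinear right-hand side of \eqref{monge ampere measure} is comparable for $u$ and $v$ wherever $u-v$ and $\nabla(u-v)$ are small. Using the Legendre identity $u^*(\nabla u(x))=x\cdot\nabla u(x)-u(x)$ and the same for $v$, assumption \eqref{eq:polynomial measure 1} gives, on a fixed neighbourhood of $0$,
\[
\left|\frac{F(x,u)}{G(\nabla u,u^*\!\circ\!\nabla u)}-\frac{F(x,v)}{G(\nabla v,v^*\!\circ\!\nabla v)}\right|\le C\big(|u-v|+|\nabla u-\nabla v|\big),
\]
with $C$ depending only on $\lambda,\Lambda$ and the Lipschitz constants. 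Combined with \eqref{eq:polynomial measure 0} and the two differential inequalities this yields, in the Aleksandrov sense near $0$,
\[
\M u\le \M v+C\big(|u-v|+|\nabla(u-v)|\big)\,dx,
\]
so, up to a lower-order term that is small where $u-v$ is small, $u$ is a subsolution of the Monge-Amp\`ere measure $\M v\ge\tfrac{\lambda^2}{\Lambda}\,dx$. One concrete device is to freeze this measure: let $w_h$ solve $\M w_h=\M v$ in $S_h$ with $w_h=u$ on $\partial S_h$; Aleksandrov's comparison principle, using $w_h=u\ge v=h$ on $\partial S_h$ on one side and the displayed inequality on the other, traps $v\le w_h\le u$ in $S_h$ up to a controlled error, and evaluating at $0$ pins $w_h$ to touch both $v$ (from above) and $u$ (from below) at the strictly convex point $0$. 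This reduces Theorem~\ref{thm:smp 3} to a strong maximum principle for two convex functions sharing a common Monge-Amp\`ere measure $\ge\tfrac{\lambda^2}{\Lambda}\,dx$ and touching at a strictly convex point — the situation the section-and-auxiliary-function machinery behind Theorems~\ref{thm:smp 0} and~\ref{thm:smp 1} is designed to handle, with the extra error term absorbed because $u-v$ is small near $0$.

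To close that reduced statement I would run Hopf's argument with sections in place of balls. If $\phi:=u-v$ did not vanish on a whole neighbourhood of $0$, there would be a section $S$ (the substitute for Hopf's interior ball) with $S\subseteq\{\phi>0\}$ and a boundary point $y_0\in\partial S$ with $\phi(y_0)=0$; on $S$ one builds an auxiliary convex function by a small deformation of the relevant solution (a slightly dilated or slightly convexified copy, so that its Monge-Amp\`ere measure still dominates the frozen one up to the error term) which lies below $\phi$ on $\partial S$, vanishes at $y_0$, and has strictly negative inner normal derivative at $y_0$; since $\phi$ is $C^1$, nonnegative, and has an interior minimum at $y_0$, this contradicts $\nabla\phi(y_0)=0$. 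Hence $\{\phi=0\}$ is open near $0$, so $u\equiv v$ on $S_h$, and the theorem follows. The main obstacle is exactly the construction of this section-adapted barrier: it must work with only $C^1$ regularity and one-sided Monge-Amp\`ere bounds (so the formal linearization $\int_0^1\!A^{ij,t}dt\cdot D_{ij}\phi$ degenerates and survives only as an Aleksandrov inequality), it must respect the affine-invariant geometry and doubling behaviour of the sections near a strictly convex point, and it must have enough slack to swallow the lower-order perturbation $C(|u-v|+|\nabla(u-v)|)$ produced by the $(x,u)$- and $(\nabla u,u^*\!\circ\!\nabla u)$-dependence in \eqref{monge ampere measure}.
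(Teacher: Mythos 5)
Your overall strategy (reduce to a Hopf-type argument at a contact point, with sections replacing Hopf's interior balls and a barrier built by deforming the solution) is the same strategy the paper uses, but the proposal stops exactly where the proof begins: you state that ``the main obstacle is exactly the construction of this section-adapted barrier'' and leave it unconstructed. That barrier is the heart of the matter. In the paper it is completely explicit: at the touching point (normalized so $u(0)=v(0)=0$, $\nabla u(0)=\nabla v(0)=0$, with a section $\{v<c_1x_n\}$ of $v$ contained in $\{u>v\}$ and touching the coincidence set at $0$), one takes $\Phi(x)=v\bigl(\tfrac12 x+Mx_ne_n\bigr)$, which satisfies $\det D^2\Phi\geq cM^2 f$, and $w=\frac{v+\delta\Phi+\frac{c_2\delta}{4}x_n}{1+\delta}$; the factor $cM^{2/n}\delta$ gained in $\det D^2 w$ is precisely the quantitative slack that absorbs the $C\delta$ errors coming from the Lipschitz dependence on $(x,u)$ and $(\nabla u,u^*\circ\nabla u)$, and the contradiction is read off from $\nabla w(0)\cdot e_n=\frac{c_2\delta}{4}>0$ versus $\nabla u(0)=0$. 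Your proposal never produces this (or any) concrete barrier, so the step ``contradicts $\nabla\phi(y_0)=0$'' is not established.

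Two further steps you assert are genuinely nontrivial and are treated differently in the paper. First, because the right-hand side of \eqref{monge ampere measure} depends on $\nabla u$ and $u^*\circ\nabla u$, the inequality $u\geq w$ on the section cannot be obtained by a plain Aleksandrov comparison of measures (your ``freeze the measure'' function $w_h$ only satisfies the desired two-sided trapping ``up to a controlled error,'' and you do not quantify or close that error); the paper instead proves $\{u<w\}=\emptyset$ by a mass-balance argument over $V=\{u<w\}$, pushing forward by $\nabla u$ and $\nabla w$ and using Lemma \ref{lem:legendre t 1} to compare $u^*$ and $w^*$ on $\partial w(V)$ --- this is the mechanism that makes the gradient-dependent terms harmless, and it is absent from your plan. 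Second, the existence of a section $S\subset\{u>v\}$ compactly contained in $\Omega$ whose boundary touches the coincidence set is exactly the new claim proved in the paper's argument for Theorem \ref{thm:smp 3} (via the supremum $\bar t(\cdot)$ and a compactness/limit argument using that $v$ is $C^1$ and strictly convex; note that here $\M v$ has no upper bound and $\M u$ no lower bound, so the doubling/engulfing machinery is unavailable); in your sketch this is simply asserted. Finally, you cannot quote Theorems \ref{thm:smp 0} or \ref{thm:smp 1} as black boxes here, since those are stated for solutions of \eqref{eq:ma measure f} or \eqref{monge ampere measure}, whereas Theorem \ref{thm:smp 3} concerns a subsolution/supersolution pair, so their proofs must be rerun with the inequalities --- which again requires the explicit barrier and the mass-balance step you have deferred.
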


	
	In Section \ref{chp:4},
	we present the corresponding version of Theorem \ref{thm:smp 3} in Minkowski problems. Using the Gauss image map $\boldsymbol{\alpha}_K(\cdot )$, the Minkowski problems for convex bodies $K \subset \R^{n+1}$ with $0 \in \mathring{K}$, are described by the equations of the type
	\[  \tilde{g}\left(\boldsymbol{\alpha}_K(\theta )\right) \frac{d \boldsymbol{\alpha}_K(\omega)}{d\omega} =\tilde{f}(\theta ) \frac{\tilde{F}(h)}{\tilde{G}(\rho)}\text{ for } \theta  \in \mathbb{S}^n,\]
		where  ${h}$ and $\rho$ denote the support function and radial function of $K$, respectively. The function
		$h$ can naturally be extended to a one-homogeneous function on $\R^{n+1}$, and by restricting it to a hyperplane, we obtain an equation of the form
	\[ g(\nabla u)\det D^2 u(x)=f(x) \cdot\frac{ F\left(x, u\right) }{G\left(\nabla u, u^* \circ \nabla u\right)}\]
	and a parallel version of Theorem \ref{thm:smp 3}, see Theorem \ref{thm:smp Gauss image} in Section \ref{chp:4}.	
	
	 Huang, Lutwak, Yang, and Zhang \cite{[HLY]}  studied the relationship between the $L_p$ Brunn-Minkowski theory and the $L_p$ dual Brunn-Minkowski theory for convex bodies $K \subset \R^{n+1}$ with $0 \in \mathring{K}$.  The  $L_p$ dual Minkowski 	 problem  was reduced to solving  the equation
	 \begin{equation}\label{eq:lp dual Minkowski problem}
	 	g\left(\frac{{\nabla}_{\mathbb{S}^{n}} {h}+{h}(\xi ) \xi  }{\sqrt{|\nabla_{\mathbb{S}^{n}} {h}|^2+{{h}}^2}}\right)
	 	\frac{\det\left({\nabla}_{\mathbb{S}^{n}}^2{h}+{h}\I \right)  }
	 	{\left({{h}}^2 +|{\nabla}_{\mathbb{S}^{n}}   {h} |^2\right)^{\frac{n+1-q}{2}} }  =f(\xi   ) {{h}}^{p-1} \text{ for } \xi \in \mathbb{S}^{n},
	 \end{equation}
 which is commonly expressed as  $	\frac{d \boldsymbol{\alpha}_K^*(\omega)}{d\omega }= \frac{1}{g \circ \boldsymbol{\alpha}_K^*}  \cdot  \frac{  {h^*}^{q}}{  {\rho^*}^{p}   }f(\xi)$ in terms of the dual body $K^*$ of $K$.  A fundamental topic is the existence, uniqueness, and regularity of convex body solutions  to  \eqref{eq:lp dual Minkowski problem}. However, there are almost no uniqueness results except for the case of the $g$ being a positive constant.  When $g$ is a positive constant, $q = n + 1$ and $ p \geq  1$, 
	the uniqueness of the solution was proved by \cite{[CW],[HuLY]} independently under the assumption that  $\mu:=f(x)dx \in \mathrm{NCH}$, i.e., $\mu$ is not concentrating on any hemisphere; but for $p\in (-n-1, 0)$ and any smooth positive functions $f$, at least two solutions were constructed in \cite{[HLW],[JLW]}.  When $g$ is a positive constant and the given measure $\mu$ is discrete, the uniqueness of the solution was proved by \cite{[LYZ]} for the case $p>q$,  and by \cite{[BF]} for the case $p>1, q>0$ and  $\mu \in \mathrm{NCH}$ being discrete. We refer to \cite{[CL],[HZ]}  for smooth solutions to the $L_p$ dual Minkowski problem.
	When $p \geq q$,   $g\equiv 1$, and $f \in C^{\alpha}$ is positive, Huang-Zhao \cite{[HZ]} proved the uniqueness of the smooth solution to the $L_p$ dual Minkowski problem  using the classical strong maximum principle. By Theorem 1.5, we will obtain the following result.
	\begin{Theorem}\label{thm:Lp DMP 0}
		Assume that $0<\lambda \leq f \leq \Lambda$, and $g $ is a positive Lipschitz function. Let $K_1$ and $K_2$ be convex bodies, both solving the $L_p$ dual Minkowski problem \eqref{eq:lp dual Minkowski problem} with $0 \in \mathring{K}_1\cap \mathring{K}_2$. If $p >q$, then $K_1$ equals to $K_2$; if $p=q$, then $K_1$ equals to $K_2$ up to dilation.
	\end{Theorem}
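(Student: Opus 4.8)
The plan is to transplant equation \eqref{eq:lp dual Minkowski problem} from $\mathbb S^n$ to a Euclidean Monge-Amp\`ere equation of the form \eqref{monge ampere measure} on a hyperplane, to reduce the statement to a touching assertion for support functions by a scaling argument, and then to apply Theorem \ref{thm:smp 3} together with a continuation argument on $\mathbb S^n$. Write $h_i=h_{K_i}$ and set $\tau=\max\{s>0:\ sK_2\subseteq K_1\}$, $\sigma=\max\{s>0:\ sK_1\subseteq K_2\}$; both are finite and positive since $0\in\mathring K_1\cap\mathring K_2$, and $\tau\sigma\le1$ with equality precisely when $K_1,K_2$ are dilates. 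For $p=q$ I take $A=K_1$, $B=K_2$, $t=\tau$. For $p>q$, if $K_1\neq K_2$ then they are not dilates — substituting $h_1=c\,h_2$ into \eqref{eq:lp dual Minkowski problem}, the two sides scale like $c^{\,q-1}$ and $c^{\,p-1}$, which forces $c=1$ — so $\tau\sigma<1$ and, after relabeling, $\tau<1$; set $A=K_1$, $B=K_2$, $t=\tau<1$. In all cases $tB\subseteq A$, $\partial(tB)$ meets $\partial A$ at a point with outer unit normal $\xi_0$, and $h_A\ge t\,h_B$ on $\mathbb S^n$ with equality at $\xi_0$.

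Rotating so that $\xi_0=e_{n+1}$, I extend the support functions one-homogeneously to $\R^{n+1}$ and restrict them to $\Pi=\{y_{n+1}=1\}\cong\R^n$, writing $u(x)=\bar h_A(x,1)$ and $v(x)=t\,\bar h_B(x,1)$; these are convex functions defined on all of $\R^n$, $u\ge v$, and $u(0)=v(0)$. Putting $\xi(x)=(x,1)/\sqrt{1+|x|^2}$, and using $\nabla\bar h(x,1)=(\nabla u(x),-u^*(\nabla u(x)))$, $h^2+|\nabla_{\mathbb S^n}h|^2=|\nabla u|^2+(u^*\!\circ\nabla u)^2$, $h(\xi(x))=u(x)/\sqrt{1+|x|^2}$, together with the classical Jacobian $\det D^2u(x)=(1+|x|^2)^{(n+2)/2}\det(\nabla^2_{\mathbb S^n}h+h\I)(\xi(x))$, equation \eqref{eq:lp dual Minkowski problem} for $h_A$ becomes
\[
  \det D^2u(x)=f(x)\,\frac{F(x,u)}{G(\nabla u,u^*\!\circ\nabla u)},
\]
where $F(x,s)=s^{p-1}$, $G(\zeta,s)=g\!\big((\zeta,-s)/\sqrt{|\zeta|^2+s^2}\big)\big/\big(|\zeta|^2+s^2\big)^{(n+1-q)/2}$, and $f(x)=(1+|x|^2)^{(n+3-p)/2}\,\tilde f(\xi(x))$. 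The same computation for the dilated body $tB$ produces an extra homogeneity factor,
\[
  \det D^2v(x)=t^{\,q-p}\,f(x)\,\frac{F(x,v)}{G(\nabla v,v^*\!\circ\nabla v)}.
\]
Since $t\le1$ and $p\ge q$ we have $t^{\,q-p}\ge1$, so on any fixed bounded connected open set $\Omega\ni0$ one gets $\det D^2u=f\,F(x,u)/G$ and $\det D^2v\ge f\,F(x,v)/G$ in the sense of measure, while $0<\lambda\le\tilde f\le\Lambda$, the bounds $0<\lambda\le g\le\Lambda$, the Lipschitz continuity of $g$, and the boundedness of $h$ and $\nabla h$ on $\Omega$ guarantee that \eqref{eq:polynomial measure 1} and \eqref{eq:polynomial measure 0} hold on $\Omega$.

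Before invoking Theorem \ref{thm:smp 3} I must know that $u$ and $v$ are $C^1$ and strictly convex; this is the delicate point, and it is exactly where the absence of a boundary on the sphere is used. The functions $u,v$ are convex on all of $\R^n$, and from the equation $\det D^2u\ge\lambda_0>0$, $\det D^2v\ge\lambda_0>0$ on $\R^n$ while both are locally bounded above. By Caffarelli's description of the non-strictly convex set — any segment on which the function is affine must have all of its extreme points on the boundary of the domain — and because here the domain is all of $\R^n$, we get $\Sigma_u=\Sigma_v=\emptyset$; hence $u,v\in C^{1,\alpha}_{loc}(\R^n)$ and are strictly convex. (Equivalently, $u$ affine along a line in $\R^n$ would, by one-homogeneity, force $\bar h_A$ to be linear along a line inside $\{y_{n+1}=0\}$, contradicting $\bar h_A>0$ off the origin.)

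Now Theorem \ref{thm:smp 3} applied to $u\ge v$ on $\Omega$ gives $u>v$ on $\Omega$ or $u\equiv v$ on $\Omega$, and $u(0)=v(0)$ rules out the first; hence $h_A\equiv t\,h_B$ on a neighborhood of $\xi_0$ in $\mathbb S^n$. Let $\mathcal U=\{\xi\in\mathbb S^n:\ h_A\equiv t\,h_B\text{ on a neighborhood of }\xi\}$: it is open and nonempty, and it is closed, for if $\xi_k\to\xi$ with $\xi_k\in\mathcal U$ then $h_A(\xi)=t\,h_B(\xi)$, and since $h_A\ge t\,h_B$ globally $\xi$ is again a touching point, so rerunning the two preceding steps in a chart centered at $\xi$ yields $h_A\equiv t\,h_B$ near $\xi$. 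By connectedness of $\mathbb S^n$, $\mathcal U=\mathbb S^n$, so $h_A\equiv t\,h_B$ and $K_A=t\,K_B$. If $p=q$ this is exactly the claim that $K_1$ equals $K_2$ up to dilation; if $p>q$, then $K_1=\tau K_2$ with $\tau<1$ would make $K_1,K_2$ dilates, contrary to what was shown above, so $K_1=K_2$. I expect the main obstacles to be the strict-convexity input of the third step and the bookkeeping in the second step that pins down the favourable exponent $t^{\,q-p}\ge1$; once these are in hand the conclusion follows from Theorem \ref{thm:smp 3} and a routine continuation.
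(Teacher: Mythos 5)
Your argument is correct and is, at bottom, the same as the paper's: the paper proves Theorem \ref{thm:Lp DMP 0} via Theorem \ref{lem:Lp dual Minkowski problem}, normalizing by a dilation ($h_3=th_2$, $t\ge1$, touching $h_1$ from above, so that the factor $t^{p-q}\ge1$ makes the dilated body a one-sided comparison object) and then applying the Gauss-image strong maximum principle (Theorem \ref{thm:smp Gauss image}) to the dual bodies $K_1^*$ and $K_3^*$ --- which, unwound, is exactly your reduction, since Theorem \ref{thm:smp Gauss image} applied to $K_i^*$ restricts the one-homogeneous extension of $h_{K_i}$ to a hyperplane and invokes Theorem \ref{thm:smp 3}. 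Your packaging differs only mildly: you dilate inward ($tK_2\subseteq K_1$, $t\le1$, hence $t^{q-p}\ge1$), you pass from \eqref{eq:lp dual Minkowski problem} directly to the form \eqref{monge ampere measure} without the duality/Gauss-image formalism, and you make explicit two points the paper leaves terse, namely the strict convexity of the restricted support functions and the open-closed continuation over $\mathbb{S}^n$; on the first of these, note that Caffarelli's localization with domain $\R^n$ only yields that a contact set would contain a full line, so your one-homogeneity/positivity remark is not an ``equivalent'' aside but the indispensable step --- keep it. Two harmless slips to correct: the Jacobian identity should be $\det D^2u(x)=(1+|x|^2)^{-(n+2)/2}\det\left({\nabla}_{\mathbb{S}^{n}}^2h+h\I\right)$, so your $f(x)$ carries a negative power of $(1+|x|^2)$ (immaterial, since you only use boundedness on a bounded $\Omega$), and the bound $\det D^2u\ge\lambda_0>0$ holds only locally rather than uniformly on $\R^n$, which is all that Caffarelli's localization and the application of Theorem \ref{thm:smp 3} on bounded $\Omega$ require.
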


	Next, we investigate the degenerate case, for which we assume
	\begin{equation}\label{eq:polynomial measure 4}
		\lambda \sum_{i=1}^m   |\P_i(x)|^{\alpha_i} \leq f (x)	 \leq \Lambda  \sum_{i=1}^m      |\P_i(x)|^{\alpha_i},  \
		\P_i \text{ are polynomials},\  \alpha_i \geq 0.
	\end{equation}	
	Note that  $\sum_{i=1}^m   |\P_i(x)|^{\alpha_i}dx$   satisfies the doubling condition (see Definition \ref{def:doubling constant} in Section \ref{chp:2}).  It follows from
	 Caffarelli and Guti\'errez \cite{[CG]} that if $u$ is convex and $\det D^2 u \approx \sum_{i=1}^m   |\P_i(x)|^{\alpha_i} $,  then  $\Sigma_u  $ is closed, and $u\in C_{loc}^1(\Omega \setminus \Sigma_u)$.
In Section \ref{chp:5} we will prove
	
	\begin{Theorem}\label{thm:smp dege 1}
		Suppose that $u  \geq v $, both $u$ and $v$ are generalized solutions to \eqref{monge ampere measure}.   Assume the assumptions \eqref{eq:polynomial measure 1} and \eqref{eq:polynomial measure 4} hold. Then on each connected component $\Omega_i$ of $\Omega \setminus (\Sigma_u \cap \Sigma_v)$, we have either $u\equiv v$ or $u >v$.
	\end{Theorem}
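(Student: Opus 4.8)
The plan is to derive the theorem from a local strong maximum principle applied on each connected component. First note that, by \eqref{eq:polynomial measure 1} and \eqref{eq:polynomial measure 4}, both $\M u$ and $\M v$ are comparable to $\sum_{i=1}^m|\P_i(x)|^{\alpha_i}\,dx$, a measure satisfying the doubling condition (Definition \ref{def:doubling constant}); hence by Caffarelli--Guti\'errez \cite{[CG]} the sets $\Sigma_u,\Sigma_v$ are closed, $u\in C^1_{loc}(\Omega\setminus\Sigma_u)$, $v\in C^1_{loc}(\Omega\setminus\Sigma_v)$, and around each strictly convex point the sections of $u$ (resp.\ $v$) are bounded, normalizable convex sets enjoying the engulfing property together with the Harnack and ABP-type estimates for the associated linearized operator. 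In particular $\Omega\setminus(\Sigma_u\cap\Sigma_v)$ is open. The claim to isolate is: \emph{if $x_0\in\Omega\setminus(\Sigma_u\cap\Sigma_v)$, $u\ge v$ near $x_0$ and $u(x_0)=v(x_0)$, then $u\equiv v$ near $x_0$.} Granting it, fix a component $\Omega_i$; the set $Z_i:=\{x\in\Omega_i:u(x)=v(x)\}$ is closed in $\Omega_i$ by continuity and open in $\Omega_i$ by the claim (every point of $Z_i$ lies in $\Omega\setminus(\Sigma_u\cap\Sigma_v)$), so $Z_i=\emptyset$ or $Z_i=\Omega_i$; since $u\ge v$ this is precisely the dichotomy $u>v$ on $\Omega_i$ or $u\equiv v$ on $\Omega_i$.

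To prove the claim I would proceed as in the proof of Theorem \ref{thm:smp 1}. After possibly replacing $(u,v)$ by the pair of Legendre transforms $(v^*,u^*)$ when $x_0\notin\Sigma_v$ — which reverses $u\ge v$ into $v^*\ge u^*$, transforms the equation into one of the same form, and moves the touching point to $\nabla v(x_0)\notin\Sigma_{v^*}$ — we may assume $x_0\notin\Sigma_u$, so $u$ is $C^1$ and strictly convex on a ball $B:=B_r(x_0)\subset\subset\Omega\setminus\Sigma_u$. Set $w:=u-v\ge0$ and $A^{ij}:=\int_0^1 A^{ij,t}\,dt$, where $A^{ij,t}$ is the cofactor matrix of $tD^2u+(1-t)D^2v$; then $\int_0^1 A^{ij,t}\,dt\cdot D_{ij}w=\M u-\M v$ in the Aleksandrov sense. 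Expanding $\M u$ and $\M v$ through Definition \ref{def:solution 01}, using the local Lipschitz bounds in \eqref{eq:polynomial measure 1}, and substituting the Legendre identity $u^*(\nabla u(x))=x\cdot\nabla u(x)-u(x)$ (so that $u^*\circ\nabla u-v^*\circ\nabla v=x\cdot\nabla w-w$ a.e.), one finds $|\M u-\M v|\le Cf\,(|w|+|\nabla w|)\,dx$ on $B$; thus $w$ solves a linearized Monge-Amp\`ere equation whose lower-order coefficients are controlled by the weight $f\approx\M u$. Suppose $w\not\equiv0$ near $x_0$, and pick $x_1\in B$ with $w(x_1)>0$ and $|x_1-x_0|$ so small that the section $S_{h^*}:=\{y:u(y)<u(x_1)+\nabla u(x_1)\cdot(y-x_1)+h^*\}$, with $h^*:=\inf\{h>0:\overline{S_h}\cap\{w=0\}\ne\emptyset\}$, satisfies $S_{h^*}\subset B$. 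Then $w>0$ on $S_{h^*}$, while $w(x_2)=0$ for some $x_2\in\partial S_{h^*}\subset B$; since $x_2\notin\Sigma_u$, $u$ is differentiable at $x_2$, and since $v\le u$ with equality there, $\nabla v(x_2)=\nabla u(x_2)$, hence $\nabla w(x_2)=0$. A Hopf-type barrier $\psi$ on $S_{h^*}$ — obtained by deforming $u$, with $\psi\le w$ on $\partial S_{h^*}$, $\psi(x_2)=0$, $D_\nu\psi(x_2)>0$ for the interior normal $\nu$ at $x_2$, and $\psi$ a subsolution of the above operator on $S_{h^*}$ (concavity of $\det^{1/n}$ bounds the principal part below by $\M u$, and shrinking $S_{h^*}$ absorbs the zeroth-order term) — then gives, by comparison on the section, $w\ge\psi$ on $S_{h^*}$, so $\liminf_{t\to0^+}w(x_2+t\nu)/t\ge D_\nu\psi(x_2)>0$, contradicting $\nabla w(x_2)=0$. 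Hence $w\equiv0$ near $x_0$.

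The principal difficulty is this Hopf-type step: building the barrier and running the comparison on a section when the linearized operator is only degenerate elliptic, so that Caffarelli's uniform-ellipticity theory does not apply. One must instead operate entirely within the geometry of sections furnished by the doubling condition — John normalization, the engulfing property, Harnack and ABP estimates on sections — which is exactly what hypothesis \eqref{eq:polynomial measure 4} secures and what the lemmas preceding Theorem \ref{thm:smp 1} are designed to package; this is where the bulk of the work lies. Two subsidiary points also need care: tracking the lower-order terms, in particular controlling the Legendre term $x\cdot\nabla w-w$ via the local Lipschitz continuity of $F$ and $G$; and verifying that the reduction to $x_0\notin\Sigma_u$ through Legendre duality keeps the transformed problem within the doubling framework (equivalently, giving the symmetric argument directly in terms of sections of $v$ and confirming that $u$ remains differentiable at the resulting touching point).
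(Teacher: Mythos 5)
Your overall architecture (reduce to a Hopf-type lemma at a touching point where both functions are strictly convex and $C^1$, slide a section until it touches the coincidence set, build a barrier by deforming the solution, control the lower-order terms coming from $F$, $G$ and the Legendre identity, and conclude from a gradient comparison at the touching point) is indeed the skeleton of the paper's argument for Theorems \ref{thm:smp 0}, \ref{thm:smp 1} and \ref{thm:smp dege 35}. But the step you yourself flag as ``where the bulk of the work lies'' is precisely the new content of Theorem \ref{thm:smp dege 1}, and your proposal contains no idea that addresses it. In the nondegenerate case the barrier is $\Phi(x)=v(\tfrac12 x+Mx_ne_n)$, and the crucial inequality $\det D^2\Phi\geq cM^2 f(x)$ uses $f(\T x)\approx f(x)$, which is exactly what fails when $f$ is allowed to vanish as in \eqref{eq:polynomial measure 4}: after the shear, $f(\T_{M}x)$ may be far smaller than $f(x)$ (e.g.\ the shear can move points onto the zero set of a factor $\P_i$), so the deformed function need not be a supersolution with boosted measure, and no amount of ``operating within the geometry of sections'' (engulfing, John normalization, ABP on sections) repairs this, since those tools only exploit the doubling condition and not the algebraic structure of the weight. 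The paper's resolution is the EDT property (Definition \ref{def:EDT condition}) together with Lemma \ref{lem:polynomial Lemma 1}: for polynomial-type weights one can choose finitely many affine maps $\T_{j,M,\tau}$, with coefficients produced by a linear-independence argument for the polynomials $\mathcal{R}_{\alpha,k}(S,z)$, so that $\sum_j|f(\T_{j,M,\tau}x)|\geq a\tau^b|f(x)|$ near the touching point; the barrier is then $\Phi=\sum_j v(\T_{j,M,\tau}x)$, which satisfies $\det D^2\Phi\geq Mf$ and makes $w=\frac{v+\delta\Phi+\frac{c_2\delta}{4}x_n}{1+\delta}$ a strict supersolution. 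Without this (or an equivalent device) your Hopf step cannot be carried out, so the proof is incomplete at its decisive point.

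Two further steps as written would fail or need replacement. First, the reduction ``replace $(u,v)$ by $(v^*,u^*)$ to assume $x_0\notin\Sigma_u$'' does not keep the problem in the admissible class: under the Legendre transform the $x$-dependent degenerate factor $f$ becomes a gradient-dependent factor $\frac{1}{f(\nabla u^*)}$, which is unbounded where $f$ vanishes and destroys both the structure \eqref{monge ampere measure}--\eqref{eq:polynomial measure 4} and the doubling/EDT framework (the paper even remarks that the strong maximum principle for gradient-dependent weights $f(x)/g(\nabla u)$ with merely bounded data is open). The paper avoids this entirely via Lemma \ref{lem:measure constrain double equiv}: since $\M u$ and $\M v$ are mutually comparable and doubling, $\Sigma_u$ and $\Sigma_v$ coincide on the coincidence set, so a touching point off $\Sigma_v$ is automatically off $\Sigma_u$ and both functions are $C^1$ and strictly convex there. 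Second, the identity $\int_0^1A^{ij,t}dt\,D_{ij}(u-v)=\M u-\M v$ ``in the Aleksandrov sense'' is not meaningful for generalized solutions whose distributional Hessians may have singular parts; the paper never linearizes, but instead runs the comparison directly at the level of Monge--Amp\`ere measures, using concavity of $\det^{1/n}$, Lemma \ref{lem:legendre t 1} to control $u^*\circ\nabla u-w^*\circ\nabla w$, and the change-of-variables computation with $\int G$ over gradient images as in the proof of Theorem \ref{thm:smp 1}. Your reduction of the dichotomy on components to the local claim is fine, but the local claim itself is not proved by the proposal.
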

	

	  In Section \ref{chp:6}, we study the connectivity of the set $\Omega \setminus (\Sigma_u \cap \Sigma_v)$ appearing in Theorem \ref{thm:smp dege 1}. Our strategy is to use Mooney's delicate estimation of the sections at non-strictly convex points \cite{[Mo1]} to control the dimension of the set of non-strictly convex points.
	
	  \begin{Definition}\label{def:ZL sets}
	  Supposing that $\det D^2 u \geq \eta $ in $\Omega$, where $\eta \geq 0$ is a continuous function. We define the null set
	  \[\mathcal{Z}_{\eta}= \{ x \in \Omega :\; \eta(x)=0\}.\]
	  Let
	  \[\L_{\eta}=\bigcup_{L \subset \mathcal{Z}_{\eta}}L, \text{ where } L \text{ is a line segment but is non-singleton}, \]
	  and
	  \[ \begin{split}
	  	\tilde{\L}_{\eta} = \bigcup_{E \subset \mathcal{Z}_{\eta}}E,
	  	& \text{ where } E \text{ is a convex set but not a singleton}, \\
	  	&\text{ and all extremal points of $E$ are on the boundary $\partial \Omega$. }
	  \end{split}\]
	  Clearly, $\tilde{\L}_{\eta}  \subset \L_{\eta}$.
	  \end{Definition}
	\begin{Theorem}\label{thm:hausdorff n-1}
		Let $u $ be a convex solution to $\det D^2 u \geq \eta $ in $\Omega$,  where $\eta \geq 0$ is a continuous function.    Then,
		\[  \mathcal{H}^{n-1}(\Sigma_u \setminus \L_{\eta}  )=0 .\]
		Furthermore, if $\M u $   satisfies the doubling condition, then
		\[\mathcal{H}^{n-1}(\Sigma_u \setminus \tilde{\L}_{\eta} )=0.\]
	\end{Theorem}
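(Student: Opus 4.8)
The plan is to write $\Sigma_u$ as a union of the maximal affine pieces of $u$ and to treat these pieces according to their dimension. Recall that $x_0\in\Sigma_u$ exactly when some supporting hyperplane of $u$ at $x_0$ has a non-trivial contact set $E$; this $E$ is a convex set with $\dim E\ge 1$ on which $u$ is affine, so $\Sigma_u=\bigcup_E E$ over all such contact sets. If $n=2$, the statement is immediate: around any point with $\eta>0$ one has $\det D^2u\ge c>0$ on a small ball, so by the strict convexity of solutions of $\det D^2u\ge\lambda>0$ applied to the restriction, no affine piece of $u$ can meet $\{\eta>0\}$; hence every such $E$ lies in $\mathcal{Z}_{\eta}$, giving $\Sigma_u\subset\L_{\eta}$, and, when $\M u$ is doubling, $\Sigma_u\subset\tilde{\L}_{\eta}$ since the Caffarelli--Guti\'errez structure theory then places the extremal points of each $E$ on $\partial\Omega$. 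So assume $n\ge 3$.

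Step 1: contact sets with $k:=\dim E>n/2$ lie in $\mathcal{Z}_{\eta}$. Pick $x_0$ in the relative interior of such an $E$; after an isometry and subtracting the corresponding supporting affine function we may take $x_0=0$, $E\subset\{x_{k+1}=\dots=x_n=0\}$, $u\ge 0$ on $\Omega$ and $u\equiv 0$ on $E$, and fix $\rho>0$ with $\{|x'|\le\rho\}\times\{0\}\subset E$, writing $x=(x',x'')$. For the thin cylinder $R_r:=\{|x'|\le\rho/2\}\times\{|x''|\le r\}$ and any $\xi=(\xi',\xi'')\in\partial u(x)$, $x\in R_r$, testing the subgradient inequality $u((q',0))\ge u(x)+\langle\xi,(q',0)-x\rangle$ against all $|q'|\le 3\rho/4$ (where $u$ vanishes) and using that $u$ is locally Lipschitz gives $|\xi'|\le Cr$; hence $\M u(R_r)=|\partial u(R_r)|\le Cr^{k}$. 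On the other hand, if $\eta(0)>0$ then for small $r$, by continuity, $\M u(R_r)\ge\int_{R_r}\eta\,dx\ge c\,r^{\,n-k}$. Comparing, $c\le Cr^{\,2k-n}\to 0$, a contradiction, so $\eta\equiv 0$ on the relative interior of $E$ and hence, by continuity, on $E$; thus $E\subset\mathcal{Z}_{\eta}$, so $E\subset\L_{\eta}$, and, under the doubling hypothesis, $E\subset\tilde{\L}_{\eta}$ (Caffarelli--Guti\'errez again places the extremal points on $\partial\Omega$).

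Step 2: contact sets with $\dim E\le n/2$. For $n\ge 3$ one has $\dim E\le n/2\le n-2$, so each such $E$ is $\mathcal{H}^{n-1}$-null; those with $E\subset\mathcal{Z}_{\eta}$ are already in $\L_{\eta}$ (and, under doubling, in $\tilde{\L}_{\eta}$). For a piece with $E\not\subset\mathcal{Z}_{\eta}$, continuity of $\eta$ forces its relative interior to meet $\{\eta>0\}$. Cover $\{\eta>0\}$ by countably many balls $B_j$ with $\det D^2u\ge c_j>0$ on $B_j$; Mooney's delicate estimate on the sections at non-strictly convex points yields, on a section $S_j$ adapted to $B_j$, that $\mathcal{H}^{n-1}(\Sigma_u\cap S_j)=0$, and since every remaining $E$ has a relative-interior point in some $B_j$ where it is an affine piece of $u|_{S_j}$, the union of all these pieces is covered by $\bigcup_j(\Sigma_u\cap S_j)$ up to lower-dimensional ``far'' remainders removed by an induction on $\dim E$. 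Combining Steps 1 and 2, $\Sigma_u\setminus\L_{\eta}$ (respectively $\Sigma_u\setminus\tilde{\L}_{\eta}$, under doubling) is contained in a countable union of $\mathcal{H}^{n-1}$-null sets, which proves the theorem.

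The main obstacle is the scaling and covering argument of Step 2: one has to run Mooney's section estimate with only a local one-sided bound $\det D^2u\ge c_j$ at one's disposal (no upper bound, and in the general case no doubling control on the geometry of the sections of $\M u$), patch the resulting estimates on the countably many balls $B_j$ into the nullity of the entire union, and verify that the portions of affine pieces that leave the $B_j$ are genuinely of dimension $\le n/2$ so that the induction terminates. A secondary, more routine point is the measurable bookkeeping needed to exhibit $\Sigma_u$ as the union of contact sets in the first place.
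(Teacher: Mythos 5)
Your Step 2 is where the proof breaks down, and it is precisely the point your last paragraph flags as an ``obstacle'' rather than resolves. Covering $\{\eta>0\}$ by balls $B_j$ and invoking Mooney's theorem on subdomains where $\det D^2u\ge c_j>0$ only controls the part of $\Sigma_u$ lying inside (sections around) those balls. But a point $x_0\in\Sigma_u\setminus\L_{\eta}$ can sit deep inside $\{\eta=0\}$: the definition of $\L_{\eta}$ only guarantees that some segment $L$ through $x_0$ on which $u$ is affine meets $\{\eta>0\}$ at some other point $X$, possibly far from $x_0$. These are exactly your ``far remainders,'' and the proposed ``induction on $\dim E$'' does not remove them: although each individual contact set has dimension $\le n/2$, the union over the (generally uncountable) family of such segments of their far portions can a priori carry positive, indeed nearly full, $\mathcal{H}^{n-1}$-measure --- ruling this out is the entire content of the theorem, so it cannot be dismissed as lower-dimensional or handled by induction on the dimension of single pieces.

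What is needed, and what the paper proves in Lemma \ref{lem:section sigma tilde}, is a transfer of the quantitative information from $X$ back to $x_0$ along the segment. At $X$ the local mass bound gives $|S_{t,p}^{u}(X)|\le Ct^{n/2}$ (Lemma \ref{lem:section n}, via the comparison Lemma \ref{lem:size section 3}), which upgrades to $|S_{t,p}^{w}(X)|\le Ct^{(n+1)/2}$ for $w=u+|x|^2$ because $X$ lies in the interior of the segment (Lemma \ref{lem:section n n+1}); a convexity/cone argument then shows that the sections of $w$ at $x_0$ itself inherit both the volume bound and the degeneration $d_{n-1}(t)/t^{1/2}\to 0$ of the John ellipsoid. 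This places every point of $\Sigma_u\setminus\L_{\eta}$ --- not just those near $\{\eta>0\}$ --- within the scope of Mooney's Lemmas (here Lemmas \ref{lem:mooneys Lemma 3.2} and \ref{lem:mooneys Lemma 3.3}), yielding $\M w\left(B_{r_k}(x_0)\right)>\epsilon^{-1}r_k^{n-1}$ along a sequence $r_k\to0$, and a Vitali covering argument together with $\M w(\Omega)<\infty$ gives $\mathcal{H}^{n-1}(\Sigma_u\setminus\L_{\eta})=0$, with the same scheme for $\tilde{\L}_{\eta}$ under the doubling hypothesis. Without such a propagation-along-the-segment step your argument does not close; your Step 1 (contact sets of dimension $>n/2$ lie in $\mathcal{Z}_{\eta}$) is correct but becomes unnecessary once the pointwise transfer is available, since the paper's proof never decomposes $\Sigma_u$ by the dimension of the contact sets.
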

	
	Combining Theorems \ref{thm:smp dege 1} and \ref{thm:hausdorff n-1}, we have
	
	\begin{Theorem}\label{thm:smp dege 2}
		Suppose that $u  \geq v $, both $u$ and $v$ are generalized solutions to \eqref{monge ampere measure}.   In addition to the assumptions \eqref{eq:polynomial measure 1} and \eqref{eq:polynomial measure 4}, we further assume that $\mathcal{H}^{n-1}(\tilde{\L}_{\eta}  )=0$, where $\eta (x)=\sum_{i=1}^m   |\P_i(x)|^{\alpha_i}$. If  $u \not\equiv v$,  then $   u$  can   touch $v$ only on $\Sigma_u \cap \Sigma_v$.
	\end{Theorem}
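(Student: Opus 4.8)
The plan is to reduce Theorem \ref{thm:smp dege 2} to Theorems \ref{thm:smp dege 1} and \ref{thm:hausdorff n-1} by means of a topological removability argument. First I would record that, under \eqref{eq:polynomial measure 1} and \eqref{eq:polynomial measure 4} together with Definition \ref{def:solution 01}, both $\M u$ and $\M v$ are comparable to $\eta(x)\,dx$ with $\eta(x)=\sum_{i=1}^m|\P_i(x)|^{\alpha_i}$; since $\eta(x)\,dx$ is a doubling measure, the second conclusion of Theorem \ref{thm:hausdorff n-1} applies both to $u$ and to $v$, giving $\mathcal{H}^{n-1}(\Sigma_u\setminus\tilde{\L}_{\eta})=\mathcal{H}^{n-1}(\Sigma_v\setminus\tilde{\L}_{\eta})=0$. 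Combining this with the standing hypothesis $\mathcal{H}^{n-1}(\tilde{\L}_{\eta})=0$ yields $\mathcal{H}^{n-1}(\Sigma_u)=\mathcal{H}^{n-1}(\Sigma_v)=0$, and in particular $\mathcal{H}^{n-1}(\Sigma_u\cap\Sigma_v)=0$. I would also use that $\Sigma_u$ and $\Sigma_v$ are closed (Caffarelli--Guti\'errez \cite{[CG]}, as recalled before Theorem \ref{thm:smp dege 1}), so that $S:=\Sigma_u\cap\Sigma_v$ is a closed, $\mathcal{H}^{n-1}$-null subset of $\Omega$.

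Next I would invoke the elementary fact that such an $S$ cannot disconnect $\Omega$: since $\Omega$ is open and connected and $\mathcal{H}^{n-1}(S)=0$, the set $\Omega\setminus S$ is again connected, and, since $\mathcal{H}^{n-1}(S)=0$ forces $S$ to have empty interior, $\Omega\setminus S$ is moreover dense in $\Omega$. Connectedness can be seen via the Eilenberg (coarea) inequality applied to the orthogonal projection onto a hyperplane: for any fixed direction, $\mathcal{H}^{n-1}$-almost every line parallel to it is disjoint from $S$, which allows one to join any two sufficiently close points of $\Omega\setminus S$ within $\Omega\setminus S$, and the general case follows from the connectedness of $\Omega$. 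This is exactly the property already used by Mooney in the non-degenerate setting.

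The argument then concludes quickly. Applying Theorem \ref{thm:smp dege 1} with $\Omega\setminus S$ playing the role of the single connected component, we obtain the dichotomy: either $u\equiv v$ on $\Omega\setminus S$, or $u>v$ on $\Omega\setminus S$. Suppose, toward a contradiction, that $u$ touches $v$ at some $x_0\in\Omega$ with $x_0\notin S$, i.e.\ $u(x_0)=v(x_0)$; this rules out the alternative $u>v$, so $u\equiv v$ on $\Omega\setminus S$, and by continuity of $u$ and $v$ together with the density of $\Omega\setminus S$ in $\Omega$ we get $u\equiv v$ on all of $\Omega$, contradicting $u\not\equiv v$. Hence $u$ can touch $v$ only on $\Sigma_u\cap\Sigma_v$. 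Since the substantive content --- the componentwise strong maximum principle and the Hausdorff estimate for $\Sigma_u$ obtained from Mooney's section estimates --- is already supplied by the two cited theorems, the only step requiring genuine care is making the non-disconnecting claim precise, and I do not expect a serious obstacle there.
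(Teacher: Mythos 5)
Your proposal is correct and is essentially the argument the paper intends: the paper deduces Theorem \ref{thm:smp dege 2} by "combining Theorems \ref{thm:smp dege 1} and \ref{thm:hausdorff n-1}", i.e.\ exactly your route of getting $\mathcal{H}^{n-1}(\Sigma_u\cap\Sigma_v)=0$ from Theorem \ref{thm:hausdorff n-1} plus the hypothesis on $\tilde{\L}_{\eta}$, noting $\Sigma_u\cap\Sigma_v$ is closed, so that $\Omega\setminus(\Sigma_u\cap\Sigma_v)$ is a single connected, dense component (the Mooney-type non-disconnecting fact), and then invoking the dichotomy of Theorem \ref{thm:smp dege 1} together with continuity and $u\not\equiv v$. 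Your fleshing-out of the removability/connectedness step is the standard ingredient the paper implicitly relies on, and it poses no obstacle.
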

	
	If the zero set of $\eta$ consists of strictly convex hypersurfaces, we have ${\L}_{\eta}=\emptyset$. And in Example \ref{exa:dege} , we will construct convex solutions $u$ and $v$ to
	\[  \  \det D^2 u =\det D^2 v =  \left|x_n -|x''|^2\right|^{ \alpha}  \text{ for } x=\left(x'',x_{n-1},x_n\right) \in  \R^{n-2} \times \R\times \R ,  \]
	such that $u\geq v $, $\Sigma_u=\Sigma_v= \left\{x_n =|x''|^2 \right\}$.  

	Finally, using the same method as Mooney \cite[Theorem 1.2]{[Mo1]}, Theorem \ref{thm:hausdorff n-1} implies the $W^{2,1}$ regularity of convex solutions to  Monge Amp\`ere equations, as derived from  Philippis, Figalli and Savin \cite{[PhiF],[Phi1]}. 	 
	
	\begin{Theorem}\label{thm:w21 estimate}
		Assuming $u$ is a convex solution of
		$\det D^2 u =f  \text{ in } \Omega $
		with $f$ as in \eqref{eq:polynomial measure 4}.  If $\mathcal{H}^{n-1}(\tilde{\L}_{\eta}  )=0$, where $\eta (x)=\sum_{i=1}^m   |\P_i(x)|^{\alpha_i}$, then $u \in W_{loc}^{2,1}(\Omega)$.
	\end{Theorem}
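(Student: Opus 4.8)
The plan is to follow Mooney's route to $W^{2,1}$ regularity \cite{[Mo1]} and show that the distributional Hessian of $u$ --- a matrix of Radon measures, since $u$ is convex --- has neither a jump part nor a Cantor part, using Theorem~\ref{thm:hausdorff n-1} to make the relevant singular set negligible. Because $u$ is convex, $\nabla u \in BV_{loc}(\Omega;\R^n)$, so $D^2 u = \nabla^2 u\, dx + D^j(\nabla u) + D^c(\nabla u)$, where $\nabla^2 u$ is the Aleksandrov Hessian, which for convex functions agrees a.e.\ with the density of the absolutely continuous part of $D(\nabla u)$. Since the absolutely continuous part of a Radon measure lies in $L^1_{loc}$, it suffices to prove $D^j(\nabla u)=0$ and $D^c(\nabla u)=0$.

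First I would isolate two sets. Put $\eta(x)=\sum_{i=1}^m|\P_i(x)|^{\alpha_i}$ and $\mathcal Z_\eta=\{x\in\Omega:\eta(x)=0\}$; we may assume $\eta\not\equiv 0$ (otherwise $f\equiv 0$ and a simpler variant applies). Dropping the indices with $\alpha_i=0$, which only force $\eta$ to be bounded below, we may take $\mathcal Z_\eta=\bigcap_i\{\P_i=0\}$, a proper real algebraic variety (or empty); in particular $\dim\mathcal Z_\eta\le n-1$, so $\mathcal Z_\eta$ is locally finite with respect to $\mathcal H^{n-1}$. On the other hand, $\det D^2 u=f\ge\lambda\eta$ with $\lambda\eta$ continuous and nonnegative, and $\M u=f\,dx$ is doubling since $f\approx\eta$ and $\eta\,dx$ is doubling by \eqref{eq:polynomial measure 4}; hence Theorem~\ref{thm:hausdorff n-1}, applied with the continuous nonnegative function $\lambda\eta$ in place of its $\eta$ (which has the same zero set, hence the same $\tilde{\L}$, as $\eta$), gives through its second assertion $\mathcal H^{n-1}(\Sigma_u\setminus\tilde{\L}_\eta)=0$. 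Combined with the hypothesis $\mathcal H^{n-1}(\tilde{\L}_\eta)=0$ this yields $\mathcal H^{n-1}(\Sigma_u)=0$, and therefore $\Sigma_u\cup\mathcal Z_\eta$ is $\sigma$-finite with respect to $\mathcal H^{n-1}$.

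Next I would show $D^j(\nabla u)$ and $D^c(\nabla u)$ are concentrated on $\Sigma_u\cup\mathcal Z_\eta$. Fix $x_0\in\Omega\setminus(\Sigma_u\cup\mathcal Z_\eta)$. Since $x_0\notin\Sigma_u$, the function $u$ is strictly convex at $x_0$ and $u\in C^1_{loc}$ near $x_0$ by Caffarelli--Guti\'errez \cite{[CG]}; working in a bounded neighborhood of $x_0$, the sections $S_h(x_0)=\{y: u(y)<\ell_{x_0}(y)+h\}$ of the supporting plane $\ell_{x_0}(y)=u(x_0)+\nabla u(x_0)\cdot(y-x_0)$ shrink to $\{x_0\}$ as $h\to 0^+$, so for $h$ small $\overline{S_h(x_0)}$ is a compact subset of $\Omega$ disjoint from the closed set $\mathcal Z_\eta$, on which $f$ is pinched between positive constants. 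After John's normalization of $S_h(x_0)$ the interior $W^{2,1}$ estimate of De~Philippis--Figalli--Savin \cite{[PhiF],[Phi1]} gives $u\in W^{2,1}\!\left(\tfrac12 S_h(x_0)\right)$. Such sections cover $\Omega\setminus(\Sigma_u\cup\mathcal Z_\eta)$, so $D^2 u$ is absolutely continuous there and $D^j(\nabla u),D^c(\nabla u)$ are supported in $\Sigma_u\cup\mathcal Z_\eta$. To conclude: $D^j(\nabla u)$ lives on the jump set $J_{\nabla u}$ with $|D^j(\nabla u)|$ absolutely continuous with respect to $\mathcal H^{n-1}$ there, but $\nabla u$ is continuous on $\Omega\setminus\Sigma_u$ by \cite{[CG]}, so $J_{\nabla u}\subseteq\Sigma_u$ and, since $\mathcal H^{n-1}(\Sigma_u)=0$, $D^j(\nabla u)=0$; and the Cantor part $D^c(\nabla u)$ vanishes on every Borel set that is $\sigma$-finite with respect to $\mathcal H^{n-1}$, so taking that set to be $\Sigma_u\cup\mathcal Z_\eta$ gives $D^c(\nabla u)=0$. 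Hence $D^2 u=\nabla^2 u\,dx\in L^1_{loc}$, i.e.\ $u\in W^{2,1}_{loc}(\Omega)$.

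The main obstacle is the interaction with the degeneracy set $\mathcal Z_\eta$: since $\mathcal H^{n-1}(\mathcal Z_\eta)$ may be positive, it cannot be discarded the way $\Sigma_u$ is, and the De~Philippis--Figalli--Savin estimate genuinely fails where $f$ vanishes. What rescues the argument is, on one hand, that near a strictly convex point one only needs that estimate on sections small enough to stay away from $\mathcal Z_\eta$ --- where the linearized operator is uniformly elliptic --- and, on the other hand, that $\mathcal Z_\eta$, being algebraic of dimension at most $n-1$, is $\mathcal H^{n-1}$-$\sigma$-finite, which is exactly the hypothesis under which the Cantor part of a $BV$ function is annihilated. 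Checking that the sections $S_h(x_0)$ really shrink to $x_0$ and admit a John normalization, together with the cited fine-structure facts for $BV$ functions, is then routine.
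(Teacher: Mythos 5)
Your argument is correct, and its first half (getting $\mathcal{H}^{n-1}(\Sigma_u)=0$ from Theorem \ref{thm:hausdorff n-1}, the doubling of $f\,dx$, and the hypothesis on $\tilde{\L}_{\eta}$) coincides with the paper's. The second half, however, is genuinely different. The paper does not avoid the degeneracy set at all: it invokes the degenerate version of the De Philippis--Figalli--Savin estimate (Theorem \ref{lem:psf Theorem 2}, i.e.\ \cite{[Phi1]}), which gives $W^{2,1+\epsilon}$ on every internal section under the doubling/$\left(\mu_\infty\right)$ condition of \eqref{eq:polynomial measure 4} even where $f$ vanishes --- so your remark that the estimate ``genuinely fails where $f$ vanishes'' is inaccurate, though harmless, since you never rely on it. This yields regularity on all of $\Omega\setminus\Sigma_u$, and the paper then disposes of $\Sigma_u$ by an elementary covering argument: since $u$ is convex and Lipschitz, $|D^2u|\le C(n)\Delta u$ as measures and $\int_{B_r}\Delta u=\int_{\partial B_r}D_\nu u\le C\|u\|_{Lip}r^{n-1}$, so a cover of $\Sigma_u$ with $\sum r_j^{n-1}<\epsilon$ shows $|D^2u|$ puts no mass on $\Sigma_u$. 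You instead use only the uniformly elliptic DPFS estimate on small sections away from $\mathcal{Z}_\eta$, and compensate with the fine structure of $BV$: the jump part of $D(\nabla u)$ lives on $J_{\nabla u}\subset\Sigma_u$ (continuity of $\nabla u$ off $\Sigma_u$ via Caffarelli--Guti\'errez) and dies because $\mathcal{H}^{n-1}(\Sigma_u)=0$, while the Cantor part dies because $\Sigma_u\cup\mathcal{Z}_\eta$ is $\mathcal{H}^{n-1}$-$\sigma$-finite, $\mathcal{Z}_\eta$ being a proper algebraic set. What each approach buys: yours avoids the degenerate $W^{2,1+\epsilon}$ theorem at the price of the Federer--Vol'pert/Cantor-part machinery and the algebraic-geometry input on $\mathcal{Z}_\eta$; the paper's route is more self-contained (divergence theorem plus the Lipschitz bound) and does not need $\mathcal{Z}_\eta$ to be small in any sense, which is why its statement never mentions $\sigma$-finiteness of the zero set. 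Minor cosmetic points in your write-up (the parenthetical about $\eta\equiv 0$, which is in fact excluded by the hypothesis $\mathcal{H}^{n-1}(\tilde{\L}_\eta)=0$, and whether $\tfrac12 S_h(x_0)$ means the half-height section or a dilation) do not affect correctness.
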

	
It is worth noting that Mooney also showed that the optimal regularity for solutions of $\det D^2 u \geq 1$ is that the second derivatives are in $L\log^{\varepsilon}L$ for small $\varepsilon>0$. On the other hand, there exist $C^{1,\alpha}$ strictly convex solutions of $\det D^2 u \leq 1$ that are not even in $W^{2,1}$, as shown in \cite{[PhiT]}.

	This paper is organized as follows: In  Section \ref{chp:2}, we introduce some   notations,   and preliminaries concerning the Monge-Amp\`ere measures.
	In  Section \ref{chp:3}, we prove Theorems \ref{thm:smp 1} and \ref{thm:smp 3}.
	In   Section \ref{chp:4},
we  first study the convex body version corresponding to Theorem \ref{thm:smp 3} via the Gauss image map, namely Theorem \ref{thm:smp Gauss image}, and then prove Theorem \ref{thm:Lp DMP 0}.
	In Section \ref{chp:5}, we first explore a nice property about polynomials (which will be called {\sl EDT property}), then use this property to construct auxiliary functions and prove Theorem \ref{thm:smp dege 1}.
	In Section \ref{chp:6}, motivating by Mooney's estimation of the size of sections at non-strictly convex points, we prove Theorems \ref{thm:hausdorff n-1} and \ref{thm:w21 estimate}. Finally in Appendix \ref{chp:a},  
 we will construct a few examples to show that   how large the coincidence can be, and how close it is to the set $\Sigma_{u}$ and $\Sigma_{v}$.

	\section{Preliminaries}\label{chp:2}

	In this paper,   $U$ and $V$ are bounded, open sets;  $u, v$ and $w$ are convex functions.  We will use $c$ and $ C$ to represent positive universal constants that depend only on $n, \lambda,\Lambda$, the Lipschitz norms of $F$ and $G$, and $b$, the doubling constants of $fdx$ in the Definition \ref{def:doubling constant} below. For any sets $A  ,B\subset \R^n$ and constant $a $, we define $aA=\{ ax:\; x\in A\}$, $A+B=\{ x+y:\; x\in A \text{ and } y \in B\}$. We will regard $x$ as $\{x\}$ when there is no confusion. All the segments and convex sets are usually assumed to be non-singleton, i.e., not a single point set.
	
	We refer to \cite{[CG],[F],[P],[TW1]} for following discussions on Monge-Amp\`ere measures.
	Given a convex function $u$ on $\Omega$, its subgradient at $x_0 \in \Omega$ is
	\[
	\partial u(x_0)=\left\{p \in \mathbb{R}^{n} :\; u(x) \geq u(x_0)+p \cdot(x-x_0) \text{ for all } x \in \Omega\right\},
	\]  and $u(x_0)+p \cdot(x-x_0) $ is a supporting hyperplane function of $u$ at $x_0$ if $p \in \partial u(x_0)$.   The Legendre transformation of $u$ is the convex function
	\[u^*(p) = \sup_{x \in \R^n}\{ x \cdot p -u(x)\} \text{ for } p \in \R^n,\]
	where  we have extended $u$  to be positive infinite outside $\Omega$ for convenience.  $\partial u(E):=\bigcup_{x \in E} \partial u(x)$ is measurable for any open and closed subset $E \subset \subset \Omega$, and so for any Borel set $E \subset \subset \Omega$. The  Monge-Amp\`ere measure $\M u$ is defined by $\M u(E)=|\partial u(E)|$ for each Borel set $E \subset \Omega$, $u$ is called the generalized (Aleksandrov) solution to $\det D^2 u = f $ if $\M u= fdx$.

	As introduced in \cite{[C4]}, let $\kappa>0$, a bounded convex set $E\subset \R^{n}$ is $\kappa$-balanced about point  $x$ if
	\[t(x-E)\subset E-x  \text{ for all } t \in [0,  c\kappa].  \]
	And $E$ is balanced about $x$ if $\kappa $ is universal.
	
	\begin{Lemma} [\cite{[John1]}, John's Lemma]
		Suppose  $\Omega \subset \R^n$ is a bounded convex set and $\mathring{\Omega} \neq \emptyset$. Then there is an ellipsoid  $E$ (called the John ellipsoid, the ellipsoid of the maximal volume contained in $\Omega$) such that
		\[ E  \subset  \Omega  \subset C(n) \left(E-x_E\right)+x_E,\]
		where $ x_E$ is the mass center of $E$ and $C(n)$ is a positive constant only depending on $n$.
	\end{Lemma}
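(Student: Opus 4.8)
The plan is to prove the statement with the sharp constant $C(n)=n$; only existence of a maximal-volume inscribed ellipsoid and the two-sided inclusion are needed (uniqueness, which makes ``the'' John ellipsoid well defined, follows from strict concavity of $A\mapsto(\det A)^{1/n}$ on positive definite matrices, but will not be used).

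\textbf{Existence and normalization.} I would first reduce to $\Omega$ compact by passing to $\bar\Omega$ (a maximal ellipsoid of $\bar\Omega$ yields the stated conclusion, after shrinking $E$ slightly toward $x_E$ if one insists on $E\subset\Omega$). Writing an ellipsoid as $c+A(B_1)$ with $A$ symmetric positive definite and volume $\omega_n\det A$, $\omega_n:=|B_1|$, the set of ellipsoids contained in $\Omega$ is nonempty since $\mathring\Omega\neq\emptyset$, and $V_0:=\sup\det A$ over this set is finite and positive. Along a maximizing sequence the centers stay in the bounded set $\Omega$, $\|A\|\le\operatorname{diam}\Omega$, and $\det A\to V_0>0$ keeps the eigenvalues of $A$ bounded below by a positive constant; passing to a subsequence gives a limit ellipsoid $E=x_E+A(B_1)\subseteq\Omega$ (using that $\Omega$ is closed) of volume $\omega_n V_0$. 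Since invertible affine maps send ellipsoids to ellipsoids, rescale all volumes by the same factor, and are compatible with the inclusion $\Omega\subseteq C(E-x_E)+x_E$, I may apply one to arrange $E=B_1$, $x_E=0$; the claim then becomes $\Omega\subseteq B_n$ (trivial for $n=1$, so assume $n\ge2$).

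\textbf{Main step.} Suppose toward a contradiction that $p\in\Omega$ with $|p|=d>n$; after a rotation $p=de_1$, and by convexity the ``ice-cream cone'' $K:=\operatorname{conv}(B_1\cup\{p\})$ lies in $\Omega$. In coordinates $(x_1,x')$ with $r=|x'|$, the ray from $p$ tangent to $\partial B_1$ meets it at $x_1=1/d$, and $K=\{r\le R(x_1)\}$ where $R(x_1)=\sqrt{1-x_1^2}$ for $-1\le x_1\le 1/d$ and $R(x_1)=(d-x_1)/\sqrt{d^2-1}$ for $1/d\le x_1\le d$. I will exhibit a spheroid inside $K$ of volume $>|B_1|$, contradicting maximality. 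Fix rates $\beta_1\in\big(\tfrac{1}{d-1},\tfrac{1}{n-1}\big)$ and $\mu_1\in\big(1,\tfrac{(d^2-1)\beta_1-1}{d}\big)$; both intervals are nonempty, the first precisely because $d>n$ and the second because $\beta_1>\tfrac{1}{d-1}$. For small $s>0$ set $a=1+s$, $\mu=\mu_1 s$, $b=\sqrt{1-2\beta_1 s}$, and
\[
E_s=\Big\{(x_1,x'):\ \frac{(x_1-\mu)^2}{a^2}+\frac{r^2}{b^2}\le 1\Big\},
\]
i.e. stretch $B_1$ along $e_1$, shrink it transversally, and push the center forward. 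Its profile is $r=\rho_s(x_1)=b\sqrt{1-(x_1-\mu)^2/a^2}$ on $[\mu-a,\mu+a]$, with $\mu-a>-1$ and $\mu+a<d$ for small $s$, so $E_s\subseteq K$ is equivalent to $\rho_s\le R$ on that interval.

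\textbf{Checking $E_s\subseteq K$ and concluding.} At $s=0$ the spheroid is $B_1$, whose profile equals $R$ along the whole arc $x_1\le 1/d$, while on $x_1\ge 1/d$ one has the identity $R(x_1)^2-(1-x_1^2)=(dx_1-1)^2/(d^2-1)$, vanishing only at the seam $x_1=1/d$. Hence the inclusion must be decided at first order in $s$. Expanding, on $\{x_1\le 1/d\}$ the condition $\rho_s\le R$ at order $s$ reduces to
\[
q(x_1):=(1+\beta_1)x_1^2+\mu_1 x_1-\beta_1\le 0\quad\text{on }[-1,\tfrac{1}{d}],
\]
which holds since $q$ is an upward parabola with $q(-1)=1-\mu_1<0$ and $q(\tfrac{1}{d})=\dfrac{d\mu_1+1-(d^2-1)\beta_1}{d^2}<0$ by the choice of $\mu_1$; on $\{x_1\ge 1/d\}$ the identity above localizes the binding constraint to the seam, where the first-order condition is again exactly $\mu_1<\tfrac{(d^2-1)\beta_1-1}{d}$. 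As these first-order gaps are uniformly positive on the fixed compact interval $[-1,d]$ and the $O(s^2)$ remainder is uniform there, $E_s\subseteq K\subseteq\Omega$ for $s$ small. But
\[
|E_s|=\omega_n(1+s)(1-2\beta_1 s)^{(n-1)/2}=\omega_n\big(1+(1-(n-1)\beta_1)s+O(s^2)\big)>\omega_n=|E|
\]
for small $s>0$, because $\beta_1<\tfrac{1}{n-1}$, contradicting maximality of $E$. Therefore $\Omega\subseteq B_n$, and undoing the normalization gives $E\subseteq\Omega\subseteq n(E-x_E)+x_E$.

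\textbf{Expected obstacle.} The construction lives entirely at first order in $s$: the unit ball already saturates both obstructions to zeroth order — the spherical cap along its entire arc $x_1\le 1/d$, the tangent cone at the single seam point $x_1=1/d$ — so maximality can only be broken through the $O(s)$ terms. The crux is to choose the lateral shrink $\beta_1$ and the forward shift $\mu_1$ of the center so that the first-order gap stays nonnegative in \emph{both} the cap inequality and the cone inequality (near the seam), while the first-order volume gain $1-(n-1)\beta_1$ is strictly positive; this is possible precisely when $\big(\tfrac{1}{d-1},\tfrac{1}{n-1}\big)$ is nonempty, i.e. exactly when $d>n$, which is how the sharp constant emerges. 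I expect the careful first-order bookkeeping near the poles and the seam (and the uniform control of the $O(s^2)$ error) to be the only genuine technical work; compactness, the affine reduction, and the elementary geometry of $K$ are routine.
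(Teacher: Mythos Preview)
The paper does not prove this lemma at all; it is stated with a citation to John's original 1948 paper and then used as a black box. Your proposal, by contrast, supplies a full proof with the sharp constant $C(n)=n$, following the classical variational argument: normalize the maximal inscribed ellipsoid to $B_1$, assume a point $p=de_1\in\Omega$ with $d>n$, and perturb $B_1$ into a spheroid $E_s$ inside the ice-cream cone $\operatorname{conv}(B_1\cup\{p\})$ with strictly larger volume. The first-order bookkeeping you carry out (the quadratic $q(x_1)$ on the cap, the seam analysis at $x_1=1/d$, and the nonemptiness of the parameter windows for $\beta_1$ and $\mu_1$ exactly when $d>n$) is correct and is precisely how the sharp constant emerges. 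The one place where a reader might want an extra line is the cone region away from the seam: there $q(x_1)$ can become positive (e.g.\ $q(1)=1+\mu_1>0$), so the first-order term alone does not help, but the zeroth-order gap $(dx_1-1)^2/(d^2-1)$ is bounded below by a positive constant on any interval $[1/d+\delta,\,\mu+a]$ and dominates the $O(s)$ correction; you allude to this localization but could make it explicit. Otherwise the argument is complete and considerably more informative than what the paper provides.
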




	We recall the comparison principle for generalized solutions. See \cite{[Ba1]}, \cite{[QT]}.
	
	\begin{Lemma}[Comparison Principle]\label{lem:comparison principle 01}
		Suppose that $u,v \in C(\overline{U})$, satisfying
		\[
		u \geq v \text{ on } \partial U  \text{ and }   \M u(E) \leq  \M v(E) < \infty \text{ for all Borel set }  E \subset U.
		\]
		Then $
		u(x) \geq v(x)   \text{ in } U.
		$
	\end{Lemma}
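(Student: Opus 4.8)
We argue by contradiction along the classical lines of the cited references, the two ingredients being a \emph{sliding} estimate for subgradients and a small quadratic perturbation that promotes the non-strict hypothesis $\M u\le\M v$ to a strict inequality.

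\emph{Step 1: a sliding lemma.} I first record the following. If $w_1,w_2\in C(\overline W)$ are convex on a bounded open set $W$ with $w_1\ge w_2$ on $\partial W$, and $G:=\{x\in W:\ w_1(x)<w_2(x)\}$, then $\partial w_2(G)\subseteq\partial w_1(G)$, and hence $\M w_2(G)\le\M w_1(G)$. Indeed, since $w_1-w_2$ is continuous and $w_1\ge w_2$ on $\partial W$, one checks that $w_1=w_2$ on $\partial G$ while $w_1\ge w_2$ on $W\setminus G$. Fix $x_0\in G$ and $p\in\partial w_2(x_0)$, and let $a(x):=w_2(x_0)+p\cdot(x-x_0)$, so that $a\le w_2$ on $W$. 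Put $m:=\inf_{\overline G}(w_1-a)$. On $\partial G$ we have $w_1-a=w_2-a\ge0$, whereas $w_1(x_0)-a(x_0)=w_1(x_0)-w_2(x_0)<0$; hence $m<0$, and (the compact set $\overline G$ being in $\overline W$) the infimum is attained at some $x_1$ which must lie in the interior $G$. Then $w_1\ge a+m$ on $\overline G$ with equality at $x_1$, and on $W\setminus G$ we have $w_1\ge w_2\ge a>a+m$; thus the affine function $a+m$, whose gradient is $p$, touches $w_1$ from below at $x_1\in G$, so $p\in\partial w_1(x_1)$. Taking Lebesgue measures gives the claim.

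\emph{Step 2: perturbation and conclusion.} Fix $y$ and $d$ so that $|x-y|<d$ for all $x\in\overline U$, and for $\epsilon>0$ set $v_\epsilon(x):=v(x)+\epsilon\bigl(|x-y|^2-d^2\bigr)$, so that $v_\epsilon<v$ on $\overline U$; in particular $v_\epsilon<v\le u$ on $\partial U$. Adding a convex quadratic strictly raises the Monge-Amp\`ere mass: $\M v_\epsilon\ge\M v+(2\epsilon)^n\,dx$. For smooth convex $v$ this is the elementary inequality $\det(D^2v+2\epsilon I)\ge\det D^2v+(2\epsilon)^n$, valid for every nonnegative symmetric matrix; the general case follows by mollifying $v$ and using the weak continuity of $w\mapsto\M w$ under local uniform convergence. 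Consequently $\M v_\epsilon\ge\M u+(2\epsilon)^n\,dx$. Suppose now $G_\epsilon:=\{u<v_\epsilon\}\neq\emptyset$. Since $u>v_\epsilon$ \emph{strictly} on $\partial U$, the set $\overline{G_\epsilon}$ is compactly contained in $U$ and $u=v_\epsilon$ on $\partial G_\epsilon$; applying Step~1 with $w_1=u$, $w_2=v_\epsilon$ yields $\M v_\epsilon(G_\epsilon)\le\M u(G_\epsilon)$. But $G_\epsilon$ is open and nonempty, so $|G_\epsilon|>0$ and, using $\M u(G_\epsilon)\le\M v(G_\epsilon)<\infty$, we get $\M v_\epsilon(G_\epsilon)\ge\M u(G_\epsilon)+(2\epsilon)^n|G_\epsilon|>\M u(G_\epsilon)$, a contradiction. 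Hence $u\ge v_\epsilon$ in $U$ for every $\epsilon>0$, and letting $\epsilon\to0$ (so $v_\epsilon\to v$ uniformly on $\overline U$) gives $u\ge v$ in $U$.

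\emph{Main obstacle.} The sliding lemma by itself only forces $\M u(G)=\M v(G)$ on $G=\{u<v\}$, which is not yet a contradiction; the crux is to convert the non-strict assumption $\M u\le\M v$ into something strict, and this is precisely what the quadratic perturbation accomplishes via the determinant inequality and approximation. The only other subtlety — that $\overline{\{u<v\}}$ might a priori reach $\partial U$ — is harmless because $u=v$ there, and in any case it disappears after the perturbation, since then $u>v_\epsilon$ holds strictly on $\partial U$ and $\overline{G_\epsilon}\subset\subset U$.
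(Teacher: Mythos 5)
Your proof is correct: the normal-mapping inclusion on the set $\{w_1<w_2\}$ obtained by sliding supporting hyperplanes, followed by the quadratic perturbation $v_\epsilon=v+\epsilon(|x-y|^2-d^2)$ and the determinant inequality $\det(A+2\epsilon I)\ge\det A+(2\epsilon)^n$, is exactly the standard argument for this comparison principle. The paper itself gives no proof of Lemma \ref{lem:comparison principle 01} (it is recalled from \cite{[Ba1]} and \cite{[QT]}), and your argument is essentially the one found in those references and in Guti\'errez's book, so there is nothing further to reconcile.
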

	
		%
	
	\begin{Definition}[\cite{[P]}, Sections]\label{def:sections}
		Let $u$ be a convex function on $U$. The section of $u$ at based point $x_0 \in U$ with height $t$  for subgradient $p \in \partial u(x_0)$ is
		\[ S_{t,p}^u(x_0) = \left\{ x \in {U}  :\; u(x) < u(x_0)+p\cdot (x-x_0)+t  \right\}.\]
		$S_{t,p}^u(x_0)$ is an  internal section provided that $S_{t,p}^u(x_0) \subset \subset U$. If $u$ is $C^1$ at $x_0$, then $\partial u(x_0) = \{ \nabla u(x_0)\}$, and we will write the section as $S_{t}^u(x_0)$ .
	\end{Definition}

	The following is the well-known doubling condition for  Monge-Amp\`ere measures.
	\begin{Definition}[\cite{[CG]}, Doubling measure for a given function]\label{def:doubling measure}
		Given a Borel measure $\mu$ on $U$ and a convex function on $U$.  Suppose that there are constants $k >0, 0<\alpha<1$ satisfying
		\[
		\mu \left(S_{t,p}^u(x)\right) \leq k \mu\left( \alpha  \left(S_{t,p}^u(x)-x\right)+x\right) \text{ for any } S_{t,p}^u(x) \subset \subset U.
		\]
		Then, we say that $\mu$ satisfies the doubling condition with respect to $u$ on  $U$,  and  $\kappa,\alpha$ are doubling constants of $\mu$ for $u$.
	\end{Definition}

	\begin{Lemma}[\cite{[CG]}, Engulfing property]\label{def:engulfing condition}
		Suppose that $\M  u $ satisfies the doubling condition with respect to $u$ on $U$.  Then, every internal section $S_{t,p}^u(x)$ is $\kappa$-balanced about its based point $x$, where $\kappa>0$ depends on the doubling constants of $\M  u $ for $u$. Moreover, if $y \in S_{\kappa t,p}^u(x)$ and   $S_{t,p}^u(x) \subset \subset U$, then $S_{\kappa^2t,p}^u(x)\subset S_{\kappa t,q}^u(y)$ for every $q \in \partial u(y)$.
	\end{Lemma}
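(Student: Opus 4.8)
This is the engulfing property of Caffarelli and Guti\'errez \cite{[CG]}, and the plan is to reproduce their three-stage argument: normalize each section to a round convex set; use the Aleksandrov estimates together with the doubling condition to show that the base point sits at a uniformly positive distance from the boundary of its own section (this is the $\kappa$-balanced property); and then derive the engulfing inclusion from the $\kappa$-balanced property by comparing supporting planes at $x$ and at $y$.

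For the normalization, fix an internal section $S:=S_{t,p}^{u}(x)$ and observe that the Monge-Amp\`ere measure, the doubling condition, and the relation ``$s(x-E)\subset E-x$'' are all covariant under affine maps, which moreover send sections to sections with base points preserved. Thus, applying John's Lemma to the convex set $S$ and renormalizing $u$ so that $\M u$ is a fixed multiple of the push-forward measure, one may assume $B_1\subset S\subset B_{C(n)}$. Writing $w:=u-(u(x)+p\cdot(\cdot-x))-t$, so that $w<0$ in $S$, $w=0$ on $\partial S$, $\M w=\M u$, and $w$ attains its minimum $-t$ at the interior point $x$ (because $p\in\partial u(x)$), the two analytic inputs I would use are Aleksandrov's maximum principle,
\[
t^{n}=|w(x)|^{n}\le C(n)\,\mathrm{dist}(x,\partial S)\,\M u(S),
\]
and the cone bound $\M u(S)\ge c(n)\,t^{n}$, obtained by comparing $w$ from above with the cone over $S$ with vertex $(x,-t)$: this cone lies above $w$ by convexity of $w$ along rays through $x$, hence $\M w(S)$ dominates its Monge-Amp\`ere mass, which at $x$ equals $t^{n}|(S-x)^{\circ}|\ge c(n)t^{n}$ since $S\subset B_{C(n)}$. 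The doubling condition enters through the observation that $\alpha(S-x)+x=\alpha S+(1-\alpha)x$ is a convex-combination subset of $S$ and is contained in the section $S_{\alpha t,p}^{u}(x)$, so that $\M u(S)\le k\,\M u(\alpha(S-x)+x)\le k\,\M u(S_{\alpha t,p}^{u}(x))$. Iterating this along the nested family $S_{\alpha^{j}t,p}^{u}(x)$ and feeding in the two Aleksandrov estimates at each scale yields a lower bound $\mathrm{dist}(x,\partial S)\ge c$ with $c>0$ depending only on $n$ and the doubling constants; since $S\subset B_{C(n)}$, every $z\in S$ then satisfies $x+s(x-z)\in S$ once $s\le c/(2C(n))=:c\kappa$, which says precisely that $S$ is $\kappa$-balanced about $x$.

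For the engulfing inclusion I would write $\overline w:=u-(u(x)+p\cdot(\cdot-x))\ge 0$, so $\overline w(x)=0$ and $S_{s,p}^{u}(x)=\{\overline w<s\}$. Given $y\in S_{\kappa t,p}^{u}(x)$ and $q\in\partial u(y)$, the vector $q-p$ lies in $\partial\overline w(y)$, so the supporting affine function $L:=\overline w(y)+(q-p)\cdot(\cdot-y)$ satisfies $L\le\overline w$, $0\le L(y)<\kappa t$, and $L(x)\le\overline w(x)=0$; moreover the $\kappa$-balanced property, applied to $S$ and to the internal section $S_{\kappa t,p}^{u}(x)$, gives an interior gradient bound for $\overline w$ at $y$ and hence uniform control of $|L|$ on $S$ in units of $t$. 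For $z\in S_{\kappa^{2}t,p}^{u}(x)\subset S$ one then tracks how low $L$ can sit at $z$ by playing these facts off one another together with $\overline w(z)<\kappa^{2}t$; Caffarelli and Guti\'errez's calibration of the two heights $\kappa^{2}t$ and $\kappa t$ is exactly what makes this close, yielding $\overline w(z)-L(z)<\kappa t$, i.e. $u(z)<u(y)+q\cdot(z-y)+\kappa t$, i.e. $z\in S_{\kappa t,q}^{u}(y)$; thus $S_{\kappa^{2}t,p}^{u}(x)\subset S_{\kappa t,q}^{u}(y)$.

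The hard part, and the place where all the care is needed, is the coupling of the Aleksandrov estimates with the doubling condition in the middle stage. The maximum principle and the cone bound together control only the ratio $t^{n}/\M u(S)$, and $\M u$ can be highly concentrated (as for $u(x)=|x|$); it is the doubling inequality, applied to the geometric sequence of dilated sections and combined with the cone bound at every scale, that actually forces the base point away from $\partial S$ in normalized coordinates. Performing that iteration with uniform control of all constants, and then re-using the output as the single balance/engulfing constant $\kappa$ so that $\kappa^{2}<\kappa$ and the inequalities in the last stage close up, is the delicate bookkeeping; once the $\kappa$-balanced property and its accompanying interior gradient bound are in hand, the supporting-plane manipulation is routine.
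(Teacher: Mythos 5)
This lemma is quoted from Caffarelli--Guti\'errez; the paper gives no proof of it, so the only fair comparison is with the argument in \cite{[CG]}, and your high-level plan (John normalization, Aleksandrov estimate plus cone comparison, doubling, then a supporting-plane argument for engulfing) is indeed that argument's skeleton. The problem is that the step you yourself flag as ``the hard part'' is not merely delicate bookkeeping: as described, it does not close. Your doubling condition is taken about the base point $x$, and you propose to combine $\M u(S_{t})\le k\,\M u(S_{\alpha t})$, iterated along $S_{\alpha^{j}t,p}^{u}(x)$, with the Aleksandrov bound $t^{n}\le C\operatorname{dist}(x,\partial S)\,\M u(\bar S)$ and the cone bound $\M u(\bar S_{s})\ge c\,s^{n}$. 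But iterating the doubling inequality only gives $\M u(S_{t})\le k^{j}\M u(S_{\alpha^{j}t})$, and the cone estimate bounds $\M u(S_{\alpha^{j}t})$ from \emph{below}, not above, so no upper bound $\M u(\bar S_{t})\le Ct^{n}$ (which is what Aleksandrov needs to force $\operatorname{dist}(x,\partial S)\ge c$) comes out of this loop. If instead one tries to bound $\M u$ of the dilated set $\alpha(S-x)+x$ by a gradient estimate, the dilation about the \emph{base point} does not move away from $\partial S$ precisely in the scenario to be excluded (when $x$ is close to $\partial S$): one only gets $\operatorname{dist}(z,\partial S)\ge(1-\alpha)\operatorname{dist}(x,\partial S)$ for $z$ in the dilated set, hence $\M u\lesssim (t/d)^{n}$ with $d=\operatorname{dist}(x,\partial S)$, and feeding this back into Aleksandrov yields only the vacuous inequality $d^{n-1}\le C$, not a lower bound on $d$.

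The missing ingredient is the converse estimate $\M u(E)\,|E|\le C\inf_{\ell}\|u-\ell\|_{L^{\infty}(E)}^{n}$, i.e.\ \cite[Lemma 1.1]{[CG]}, quoted in this paper as Lemma \ref{lem:size section 2}; its proof dilates about the \emph{center of mass} of the normalized section (a point at universal distance from the boundary, unlike $x$), or equivalently uses the equivalence of the two forms of the doubling condition. With that lemma in hand, $\M u(\bar S_{t})\le Ct^{n}$ and the $\kappa$-balanced property follow in two lines from Lemma \ref{lem:size section abp}; without it, your stage two is an assertion. A similar remark applies to the engulfing half: writing $\overline w=u-u(x)-p\cdot(\cdot-x)$, the supporting-plane computation reduces to showing $(q-p)\cdot(y-z)<\kappa(1-\kappa)t$ for $y\in S_{\kappa t,p}^{u}(x)$, $z\in S_{\kappa^{2}t,p}^{u}(x)$, and this requires a quantitative statement that $S_{\kappa t,p}^{u}(x)$ lies strictly inside $S_{t,p}^{u}(x)$ as seen from each of its points (so that one can step from $y$ in the direction $y-z$ by a definite factor while staying in $S_{t,p}^{u}(x)$); that again uses the doubling condition through section-size estimates, not just the balancedness of the single section $S_{t,p}^{u}(x)$, and in your write-up it is replaced by an appeal to ``calibration''. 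So the route is the right one, but the two quantitative pillars it rests on are exactly the ones left unproved.
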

	
	The following  Lemmas \ref{lem:Pogorelov Line} and \ref{lem:Pogorelov Line 1} below can be found in \cite{[C1],[C2],[C3]}.
	
	\begin{Lemma} \label{lem:Pogorelov Line}
		Suppose that $\M  u $ satisfies the doubling condition with respect to $u$ on $U$. Then, for every internal section $S_{t,p}^u(x)  $, we have
		$u \in C_{loc}^{1,\alpha} \left(S_{t,p}^u(x)\right),$
		where $\alpha>0$ depends only on $n$ and the doubling constants of $\M u$ for $u$. Thus,   $\Sigma_u$ is closed,  and $u$ is  $C^1$ and strictly convex on $U\setminus \Sigma_u$. Moreover, $\Sigma_u$  is the union of
 the all  such convex sets $E $  that $u$ is linear on $E$, all extremal points of $E$ are on $\partial U$, and $1\leq \dim E < \frac{n}{2}$. Here and below,  a point $x $ is called the extremal point of the convex set $E$ if
  $x \in \partial E$ and $H \cap E =\{x\}$ for some hyperplane $H$.  When $n = 3$ or 4, such  $E$ are line segments.
	\end{Lemma}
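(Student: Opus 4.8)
This statement repackages, for doubling Monge-Amp\`ere measures, Caffarelli's strict convexity and interior $C^{1,\alpha}$ theory \cite{[C1],[C2],[C3]} (the doubling extension being \cite{[CG]}). The plan is to deduce it from three inputs, in order: (a) Caffarelli's localization property of contact sets; (b) the affine-invariant $C^{1,\alpha}$ estimate on internal sections; (c) a section-volume estimate fixing the dimension of the flat pieces of $u$. Throughout, $\ell,\ell'$ denote affine functions, and I use the elementary observation that $x_0\notin\Sigma_u$ if and only if every supporting plane of $u$ at $x_0$ touches $u$ only at $x_0$.

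First I would invoke the localization property in its doubling form: if $\ell$ supports $u$ at a point of $U$ and the contact set $W:=\{x\in U:\ u(x)=\ell(x)\}$ is not a single point, then every extreme point of $W$ lies on $\partial U$. Granting this, $u$ is strictly convex on every internal section $S=S^u_{t,p}(x_0)$. Indeed, write $\ell'$ for the plane defining $S$, so that $u-\ell'\ge0$ on $U$ and $S=\{u-\ell'<t\}$; if some $y\in S$ had a supporting plane $\ell_y$ with non-singleton contact set $W_y$, then $u-\ell'$ would be affine on $W_y$ (since $u=\ell_y$ there), hence attain its minimum over $W_y$ at an extreme point $w_*$, which lies on $\partial U$ by the localization property, yet $(u-\ell')(w_*)\le(u-\ell')(y)<t$; so $S$ would contain points arbitrarily close to $w_*\in\partial U$, contradicting $S\subset\subset U$. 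Since, moreover, any $x_0\notin\Sigma_u$ is the base point of an internal section -- the plane $\ell'(x)=u(x_0)+p\cdot(x-x_0)$, $p\in\partial u(x_0)$, has $\{u=\ell'\}=\{x_0\}$, so $S^u_{t,p}(x_0)\subset\subset U$ for small $t$ -- and that section meets $\Sigma_u$ nowhere, $U\setminus\Sigma_u$ is open, i.e.\ $\Sigma_u$ is closed, and (together with step (b) below) $u$ is $C^1$ and strictly convex on $U\setminus\Sigma_u$. Finally, $x_0\in\Sigma_u$ iff $u$ is affine on some nondegenerate segment from $x_0$ iff some supporting plane $\ell$ at $x_0$ has $\{u=\ell\}\supsetneq\{x_0\}$; by the localization property that contact set is an admissible convex set $E$ -- one on which $u$ is affine with all extreme points on $\partial U$ -- containing $x_0$, and conversely every such $E$ lies in $\Sigma_u$, so $\Sigma_u=\bigcup_E E$.

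Next I would prove the $C^{1,\alpha}$ estimate on an internal section $S=S^u_{t,p}(x)$. On $S$, $u$ is strictly convex by step (a), and $\M u$ is doubling with respect to $u$; normalizing a subsection by the affine map from John's Lemma produces a solution on a domain comparable to a ball whose Monge-Amp\`ere measure is still doubling with the same constants, and iterating this normalization -- using the engulfing property (Lemma \ref{def:engulfing condition}) to compare nested sections about nearby base points -- forces geometric decay of the oscillation of the supporting planes of $u$, whence $u\in C^{1,\alpha}_{loc}(S)$ with $\alpha=\alpha(n,\text{doubling constants})$; this is the estimate of \cite{[CG]} (and of \cite{[C2]} when $\lambda\le\det D^2u\le\Lambda$). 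Since every point of $U\setminus\Sigma_u$ lies in an internal section by step (a), the $C^1$ and strict convexity claims off $\Sigma_u$ follow.

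Finally I would prove the dimension bound for the admissible sets $E$. Let $u$ be affine on such an $E$ and put $k=\dim E$; enlarging $E$ to the full contact set only increases $k$, so assume $E=\{u=\ell\}$, and after an affine normalization $\ell\equiv0$, $u\ge0$, $u\equiv0$ on $E\subset\R^k\times\{0\}$. Passing to the Legendre transform $u^*$ -- again convex with a doubling Monge-Amp\`ere measure -- one has $\partial u^*(0)=E$, so $u^*$ has a $k$-dimensional corner at the origin: $u^*(p)\ge\sup_{q\in E}q\cdot p$ grows at least linearly in the $k$ directions spanned by $E$ but with vanishing first-order part in the complementary $n-k$ directions. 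Hence a height-$s$ section of $u^*$ based at $0$ is, up to affine equivalence, product-like, with half-width $\approx s$ in those $k$ directions and much larger half-width in the rest; comparing its volume with the standard section-volume estimate in the doubling setting shows such a section can carry a genuine doubling Monge-Amp\`ere measure only when $2k<n$. This is Caffarelli's section estimate \cite{[C1]} (see also \cite{[C3]}). Thus $1\le\dim E<n/2$; in particular $\dim E=1$, so the $E$ are line segments, when $n=3$ or $4$. The hard part throughout is really two things -- the localization property used in step (a) and, above all, the dimension estimate $2\dim E<n$ of step (c), each resting on the delicate section-geometry and gradient-image bookkeeping indicated above; the remaining assertions (the structure and closedness of $\Sigma_u$, and the $C^1$ regularity off $\Sigma_u$) then follow as shown.
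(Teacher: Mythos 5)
The paper never proves this lemma: it is quoted as known, with the attribution that Lemmas \ref{lem:Pogorelov Line} and \ref{lem:Pogorelov Line 1} ``can be found in \cite{[C1],[C2],[C3]}'' (the doubling versions being Caffarelli--Guti\'errez \cite{[CG]}, and the dimension estimate going back to Caffarelli's note \cite{[C5]}). Your proposal is therefore not at odds with the paper's treatment: steps (a) and (b) are faithful sketches of the localization property and of the \cite{[CG]} interior $C^{1,\alpha}$ estimate, and the glue you add in step (a) (the affine function $u-\ell'$ attains its minimum over the contact set at an extreme point, which lies on $\partial U$, so the segment toward it would enter $S\subset\subset U$ arbitrarily close to $\partial U$) is correct up to routine closure/continuity care. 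The one place where your added reasoning does not stand on its own is step (c): under the bare doubling hypothesis of the lemma you cannot assert that the Legendre transform $u^*$ again has a doubling Monge--Amp\`ere measure --- that duality is available when $\lambda\le\det D^2u\le\Lambda$, but is unjustified for a general doubling $\M u$ --- and the width/volume comparison you sketch, even granted, only yields $\dim E\le n/2$; the strict inequality $2\dim E<n$ requires Caffarelli's finer argument. Since you ultimately defer this step to the literature, exactly as the paper does, the proposal is acceptable as a citation-level account, but the Legendre-transform paragraph should not be presented as a proof; also the reference for the dimension estimate is \cite{[C5]} rather than \cite{[C1]}.
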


	\begin{Lemma}\label{lem:Pogorelov Line 1}
		Suppose that $u$ is convex in $U$ and $x_0\in U$, then
		\[x_0 \notin  \Sigma_u
		\Leftrightarrow \bigcap_{t> 0}S_{t, p}^u(x_0)= \{x_0\}, \; \forall \; p\in \partial u(x_0).\]
		Furthermore, if $\M u $  satisfies the doubling condition on $U$, the strict convexity of $u$ at $x_0$ is equivalent to the existence of an internal section at $x_0$.
	\end{Lemma}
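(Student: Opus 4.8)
The plan is to deduce the first equivalence from an explicit description of $\bigcap_{t>0}S_{t,p}^u(x_0)$ together with elementary convexity, and then to obtain the ``furthermore'' part from this first equivalence combined with the $C^{1,\alpha}$-regularity and the structure of $\Sigma_u$ furnished by Lemma \ref{lem:Pogorelov Line}. For the first equivalence I would fix $p\in\partial u(x_0)$ and write $\ell_p(x)=u(x_0)+p\cdot(x-x_0)$ for the corresponding supporting hyperplane, so that $u\ge\ell_p$ on $U$. From Definition \ref{def:sections}, a point $x\in U$ lies in $S_{t,p}^u(x_0)$ for \emph{every} $t>0$ iff $u(x)\le\ell_p(x)$, i.e. iff $u(x)=\ell_p(x)$; hence
\[\bigcap_{t>0}S_{t,p}^u(x_0)=K_p:=\{\,x\in U:\ u(x)=\ell_p(x)\,\},\]
a convex subset of $U$ containing $x_0$. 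I would then check that ``$K_p=\{x_0\}$ for every $p\in\partial u(x_0)$'' is equivalent to ``$u$ is affine on no non-degenerate segment of $U$ having $x_0$ as an endpoint'', which by the midpoint definition of $\Sigma_u$ and convexity is precisely $x_0\notin\Sigma_u$: one direction is immediate since $x_1\in K_p\setminus\{x_0\}$ gives $[x_0,x_1]\subset K_p$ by convexity of $K_p$; for the converse, if $u$ is affine on $[x_0,x_1]\subset U$ I would take $q\in\partial u(m)$ at the midpoint $m$, test $u\ge u(m)+q\cdot(\cdot-m)$ at $x_0$ and at $x_1$, and add — the averaging relation $u(m)=\tfrac12(u(x_0)+u(x_1))$ forces equality at both endpoints, so $q\in\partial u(x_0)$ and $x_1\in K_q$.

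For the implication ``internal section at $x_0\Rightarrow x_0\notin\Sigma_u$'': assuming $S:=S_{t,p}^u(x_0)\subset\subset U$ is internal, Lemma \ref{lem:Pogorelov Line} gives $u\in C^{1,\alpha}_{loc}(S)$, so $u$ is differentiable at $x_0$ with $p=\nabla u(x_0)$. If $x_0\in\Sigma_u$, the structure statement of Lemma \ref{lem:Pogorelov Line} produces a convex set $E\ni x_0$ on which $u$ is affine and all of whose extremal points lie on $\partial U$, so $\overline E$ meets $\partial U$ and $E$ contains points $z$ as close to $\partial U$ as we please. For such a $z$, affinity of $u$ along $[x_0,z]\subset E$ makes the directional derivative of $u$ at $x_0$ along $z-x_0$ equal to $\tfrac{u(z)-u(x_0)}{|z-x_0|}$, while differentiability at $x_0$ makes it equal to $p\cdot\tfrac{z-x_0}{|z-x_0|}$; comparing the two gives $u(z)=\ell_p(z)$, hence $u\equiv\ell_p$ on all of $[x_0,z]$, so $[x_0,z]\subset S_{t,p}^u(x_0)=S$. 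Then $\overline S$ meets $\partial U$, contradicting $S\subset\subset U$; therefore $x_0\notin\Sigma_u$.

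For the reverse implication ``$x_0\notin\Sigma_u\Rightarrow$ internal section at $x_0$'': I would fix $p\in\partial u(x_0)$, extend $u$ to $\overline U$ by lower semicontinuity (so $u\ge\ell_p$ persists on $\overline U$), and consider the sublevel sets $C_t:=\{x\in\overline U:\ u(x)\le\ell_p(x)+t\}$, which are compact, decreasing in $t$, nonempty, and satisfy $\overline{S_{t,p}^u(x_0)}\subset C_t$ and $\bigcap_{t>0}C_t=\{x\in\overline U:\ u(x)=\ell_p(x)\}$. Arguing as in the first step — a point $z\neq x_0$ of this intersection would, via convexity along $[x_0,z]$, make $u$ affine on the part of $[x_0,z]$ lying in $U$, contradicting $x_0\notin\Sigma_u$ — this intersection reduces to $\{x_0\}\subset\subset U$, so by the finite intersection property applied to the decreasing compacts $C_t\setminus U$, some $C_t$ is contained in $U$; then $\overline{S_{t,p}^u(x_0)}\subset U$ and $S_{t,p}^u(x_0)$ is an internal section. (This implication in fact uses nothing beyond convexity; the doubling hypothesis enters only in the previous step, through Lemma \ref{lem:Pogorelov Line}.)

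The hard part is the implication ``internal section $\Rightarrow$ strict convexity''. Knowing only that $u$ is affine along \emph{some} segment through $x_0$ does not by itself rule out a small internal section, and the tempting shortcut ``an affine piece carries positive Monge-Amp\`ere mass'' is false, since a low-dimensional face of $u$ has zero Lebesgue measure and hence zero Monge-Amp\`ere mass. What makes the argument go through is that, by Lemma \ref{lem:Pogorelov Line}, the maximal affine face through a point of $\Sigma_u$ must reach $\partial U$, while the differentiability of $u$ inside a section pins that face onto the single supporting hyperplane $\ell_p$, hence into the section itself — which is precisely what prevents the section from being internal. A minor, purely technical issue is the careful use of the lower-semicontinuous boundary extension in the reverse implication.
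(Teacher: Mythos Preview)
Your proposal is correct. The paper does not supply its own proof of this lemma; it is stated as a known fact, with the line ``The following Lemmas~\ref{lem:Pogorelov Line} and~\ref{lem:Pogorelov Line 1} below can be found in \cite{[C1],[C2],[C3]}'' pointing to Caffarelli's work. Your argument is a clean, self-contained derivation from the definitions and from Lemma~\ref{lem:Pogorelov Line}: the first equivalence comes from identifying $\bigcap_{t>0}S_{t,p}^u(x_0)$ with the contact set $K_p=\{u=\ell_p\}$ and relating ``$K_p=\{x_0\}$ for all $p$'' to the midpoint definition of $\Sigma_u$; the implication ``internal section $\Rightarrow$ strict convexity'' uses both conclusions of Lemma~\ref{lem:Pogorelov Line} (the $C^{1}$-regularity inside a section, and the fact that the affine face through a point of $\Sigma_u$ reaches $\partial U$) in exactly the way Caffarelli's theory intends; and the reverse implication is a pure compactness argument. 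One small caveat: your compactness step in that reverse implication (``convexity along $[x_0,z]$'') tacitly uses that the segment $[x_0,z]$ lies in the domain of $u$, i.e.\ that $U$ is convex or that $u$ extends convexly to a convex neighbourhood of $\overline U$ --- this is the standard standing assumption in the Monge--Amp\`ere setting and is implicit throughout the paper, but it is worth flagging. Your closing observation that the doubling hypothesis enters only via Lemma~\ref{lem:Pogorelov Line}, and only in the ``internal section $\Rightarrow$ strict convexity'' direction, is also accurate.
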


	
	\begin{Definition}[\cite{[CG]}, Doubling condition]\label{def:doubling constant}
		Let  $\mu$ be a Borel measure on $U$ and $c_n$  be a small universal constant that depends only on $n$. Suppose that there is a small constant $b>0$  such that for every convex set $E\subset U$, we have
		\[
		\mu \left(c_n(E-x_E)\right) \geq b\mu (E-x_E),
		\]
		where $x_E$ denotes the mass center of $E$. Then we say $\mu$ satisfies the doubling condition and call $b$ the doubling constant of $\mu$.
	\end{Definition}

	To check the doubling condition for a given convex function, following \cite[Lemma 2.1]{[CG]},   we only need to check it for all ellipsoids. See \cite[Lemma 2.3]{[JW2]} for its proof.

	\begin{Remark}
		Note that both $ F$ and $G $  satisfy \eqref{eq:polynomial measure 1} in this paper. For a solution $u$ to \eqref{monge ampere measure}, we always regard the doubling condition for $fdx$ as the doubling condition for $\M u$.
	\end{Remark}

	Polynomials satisfy the doubling condition. In fact, they satisfy a stronger condition, the so-called {\sl condition $\left(\mu_{\infty}\right)$}.
	
	\begin{Lemma}[\cite{[Phi1]}, Condition $\left(\mu_{\infty}\right)$]\label{def:condition mu}
		Suppose that    $f$ satisfies $\lambda|\P(x)|^{\alpha}  \leq f(x) \leq \Lambda|\P(x)|^{\alpha}$ 
for some constant $\alpha \geq0 $  and a polynomial $\P(x)$. Let $\mu =fdx$. Then there are constants $\beta \geq 1$ and $\gamma>0$, depending only on $\lambda,\Lambda,n,\P,\alpha$, such that for all convex sets $E$,
		\[
		\frac{\mu(V)}{\mu(E)} \geq \gamma\left(\frac{|V|}{|E|}\right)^\beta   \text{ holds for all open set } V \subset E.
		\]
	\end{Lemma}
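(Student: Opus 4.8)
\textbf{Proof proposal for Lemma \ref{def:condition mu} (Condition $(\mu_\infty)$ for $f \approx |\P|^\alpha$).}

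The plan is to reduce the estimate for a general convex set $E$ to the case of ellipsoids via John's Lemma, and then to exploit the homogeneity of polynomials under affine normalization. First I would fix the convex set $E$ and let $\mathcal{E}$ be its John ellipsoid, so that $\mathcal{E} \subset E \subset C(n)(\mathcal{E}-x_{\mathcal{E}}) + x_{\mathcal{E}}$. Choosing an affine map $T$ that sends $\mathcal{E}-x_{\mathcal{E}}$ to the unit ball $B_1$, and writing $\widetilde{\P}(y) = \P(T^{-1}y + x_{\mathcal{E}})$, the question becomes: for $\Omega_0$ a convex set with $B_1 \subset \Omega_0 \subset B_{C(n)}$, and any open $V_0 \subset \Omega_0$, does
\[
\frac{\int_{V_0} |\widetilde{\P}|^\alpha \, dy}{\int_{\Omega_0} |\widetilde{\P}|^\alpha \, dy} \geq \gamma \left( \frac{|V_0|}{|\Omega_0|}\right)^\beta
\]
hold with $\gamma, \beta$ independent of $T$ and $E$? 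Because $|\Omega_0| \approx_n 1$ and $|V_0| \leq |\Omega_0|$, and because $V_0 \subset B_{C(n)}$, it suffices to prove the cleaner statement: there exist $\beta \geq 1$, $\gamma > 0$ depending only on $n, \alpha$, and the degree $d$ of $\P$ (but \emph{not} on the coefficients of $\widetilde{\P}$) such that for every polynomial $Q$ of degree $\leq d$ and every measurable $V_0 \subset B_{C(n)}$,
\[
\int_{V_0} |Q|^\alpha \, dy \;\geq\; \gamma\, |V_0|^\beta \int_{B_1} |Q|^\alpha \, dy.
\]
The reduction to degree-$d$ polynomials with \emph{no coefficient bounds} is the crucial point: $T$ can be arbitrarily eccentric, so $\widetilde{\P}$ ranges over all polynomials of degree $\leq d$.

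The heart of the matter is therefore a \emph{dimension-free small-ball / reverse-Hölder} estimate for polynomials, which I would establish by a compactness–normalization argument. Normalize $Q$ so that $\int_{B_1} |Q|^\alpha = 1$ (rescale $Q$ by a positive constant; this changes nothing since the inequality is homogeneous of degree $\alpha$ in $Q$ on both sides). The space of polynomials of degree $\leq d$ in $n$ variables is finite-dimensional, and the set $\{Q : \deg Q \leq d,\ \int_{B_1}|Q|^\alpha = 1\}$ is compact (here one uses that on a finite-dimensional space of polynomials the functional $Q \mapsto \int_{B_1}|Q|^\alpha$ is a positive continuous function vanishing only at $Q\equiv 0$, hence equivalent to any norm). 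One then wants: $\inf_{\|Q\|=1} \int_{V_0} |Q|^\alpha \geq c(|V_0|)$ where $c(s) > 0$ for $s > 0$. The key structural input is the classical fact (Remez-type / Łojasiewicz-type inequality) that for a polynomial $Q$ with $\int_{B_1}|Q|^\alpha = 1$, the sublevel sets $\{|Q| < \epsilon\}$ have measure controlled: $|\{y \in B_{C(n)} : |Q(y)| < \epsilon\}| \leq C(n,d,\alpha)\, \epsilon^{\theta}$ for some $\theta = \theta(n,d,\alpha) > 0$. Granting this, if $|V_0| = s$ then $V_0$ cannot be entirely contained in $\{|Q| < \epsilon\}$ once $C(n,d,\alpha)\epsilon^\theta < s$, i.e. for $\epsilon \approx (s/C)^{1/\theta}$ the set $V_0 \setminus \{|Q| < \epsilon\}$ has measure $\geq s/2$, whence $\int_{V_0}|Q|^\alpha \geq (s/2)\,\epsilon^\alpha \gtrsim s \cdot s^{\alpha/\theta} = s^{1+\alpha/\theta}$. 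This gives the claim with $\beta = 1 + \alpha/\theta$ and $\gamma$ depending on $n, d, \alpha$ (when $\alpha = 0$ the statement is trivial with $\beta = 1$, $\gamma$ a ratio of volumes). Tracking the constant $\beta$ back through the John-ellipsoid reduction, $\beta$ stays the same and $\gamma$ only picks up dimensional volume factors.

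The main obstacle is proving the sublevel-set bound $|\{|Q| < \epsilon\} \cap B_{C(n)}| \lesssim \epsilon^\theta$ \emph{uniformly over all normalized polynomials of bounded degree}, since the zero set of $Q$ can be geometrically wild and the normalization $\int_{B_1}|Q|^\alpha = 1$ only controls $Q$ on the unit ball, not on the larger ball $B_{C(n)}$. To handle the passage from $B_1$ to $B_{C(n)}$ one uses that $\|Q\|_{L^\infty(B_{C(n)})} \leq C(n,d,C(n)) \|Q\|_{L^\infty(B_1)} \leq C'(n,d,\alpha)$ by a Bernstein–Markov / norm-equivalence inequality for polynomials (all norms on a fixed finite-dimensional space are comparable, and dilation by $C(n)$ multiplies coefficients by bounded factors). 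For the sublevel-set measure estimate itself I would slice along lines: restricting $Q$ to a line gives a one-variable polynomial of degree $\leq d$ whose leading behavior is controlled after normalization, and for one-variable polynomials the bound $|\{t \in I : |q(t)| < \epsilon\}| \leq C(d) (\epsilon / \|q\|_{L^\infty(I)})^{1/d} |I|$ is elementary (it follows from the fact that $q - c$ has at most $d$ roots, or from the Remez inequality); integrating over the family of parallel lines via Fubini yields the $n$-dimensional bound with $\theta = 1/d$. Assembling these pieces — John reduction, degree/coefficient-free normalization, Remez on lines, Fubini — completes the proof; alternatively one may simply cite De Philippis–Figalli (\cite{[Phi1]}) where this is Proposition-level, but the argument above is self-contained.
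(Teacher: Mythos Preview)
The paper does not give its own proof of this lemma; it is stated with a citation to \cite{[Phi1]} (De Philippis--Figalli--Savin) and used as a black box. Your proposal therefore goes well beyond what the paper does, supplying a self-contained argument where the authors simply quote the literature.

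Your argument is essentially correct and follows the standard route: John normalization to reduce to a fixed ball, homogeneity to normalize the polynomial, and a Remez-type sublevel estimate uniform over degree-$d$ polynomials. One point deserves a little more care: in the slicing step you write that restricting $Q$ to a line gives a one-variable polynomial ``whose leading behavior is controlled after normalization,'' but on many individual lines the restriction of $Q$ can be identically zero or have arbitrarily small sup-norm, so the one-dimensional Remez bound $|\{|q|<\epsilon\}|\le C(d)(\epsilon/\|q\|_\infty)^{1/d}|I|$ is not directly usable line-by-line. The cleanest fix is either to invoke the $n$-dimensional Remez/Brudnyi--Ganzburg inequality directly (which gives $|\{x\in B_R:|Q(x)|<\epsilon\}|\le C(n,d)R^n(\epsilon/\|Q\|_{L^\infty(B_R)})^{1/d}$ in one stroke), or to slice in a carefully chosen direction in which the top-degree part of $Q$ does not vanish identically, so that on every parallel line the restricted polynomial has degree exactly $d$ with leading coefficient bounded below; then Fubini goes through. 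Either way the exponent $\theta=1/d$ and the conclusion $\beta=1+\alpha d$ are correct, and the rest of your reduction is clean.
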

	
	The following Lemmas are well known for convex functions.
	\begin{Lemma}[\cite{[F],[G], [TW1]}, Aleksandrov-Bakelman-Pucci Maximum Principle]\label{lem:size section abp}
		If $S_{t,p}^u(x_0) $ is an internal section, then
		\[ | \M u (S_{t,p}^u(x_0))| \cdot |S_{t,p}^u(x_0)| \geq c_n t^{n}, \]
		where positive constant $c_n $ only depends only on $n$.
	\end{Lemma}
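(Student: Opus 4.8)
\emph{Approach.} I would prove the inequality by reducing to a \emph{normalized} section via an affine change of variables and then running the Aleksandrov maximum principle --- the subgradient-ball argument --- on that normalized section. The mechanism is that the product $\M u(S)\cdot|S|$ and the height $t$ are both invariant under affine maps, so nothing is lost in normalizing, while on the normalized section the two factors $\M u(S)$ and $|S|$ can be estimated separately. Concretely, set $S=S^{u}_{t,p}(x_0)$ and subtract the supporting plane $\ell(x)=u(x_0)+p\cdot(x-x_0)$: this changes neither $\M u$ nor the set $S$ nor the number $t$, so I may assume $u\ge 0$, $u(x_0)=0$, $S=\{u<t\}\subset\subset U$, and (by continuity of $u$ on $\overline S$, using that the section is internal) $u\equiv t$ on $\partial S$. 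Next, for an invertible affine map $T(x)=Ax+b$ and $v:=u\circ T^{-1}$ one checks that $\partial v(TE)=(A^{T})^{-1}\partial u(E)$ for every Borel $E$, whence $\M v(TE)\,|TE|=\M u(E)\,|E|$, while $TS=\{v<t\}$ is again a section of height $t$. Thus $\M u(S)\,|S|$ and $t$ are affine invariants of the pair, and John's Lemma lets me assume from now on that $B_1\subset S\subset B_{C_n}$ with $C_n$ the John constant; in particular $|S|\ge\omega_n$ and $\operatorname{diam}S\le 2C_n$.

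\emph{Aleksandrov estimate on the normalized section.} I would fix $q\in\R^n$ with $|q|<t/(2C_n)$ and let $x_1\in\overline S$ minimize the convex function $g(x):=u(x)-q\cdot x$ over $\overline S$. If $x_1\in\partial S$, then $g(x_1)=t-q\cdot x_1$ while $g(x_0)=-q\cdot x_0$, and minimality would force $t\le q\cdot(x_1-x_0)\le|q|\operatorname{diam}S<t$, a contradiction; hence $x_1$ is an interior minimizer, so $0\in\partial g(x_1)$, i.e. $q\in\partial u(x_1)\subset\partial u(S)$. Therefore $B_{t/(2C_n)}(0)\subset\partial u(S)$, which gives $\M u(S)=|\partial u(S)|\ge\omega_n\bigl(t/(2C_n)\bigr)^{n}$.

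\emph{Conclusion.} Multiplying this by $|S|\ge\omega_n$ yields $\M u(S)\,|S|\ge c_n\,t^{n}$ with $c_n=\omega_n^{2}(2C_n)^{-n}$ in the normalized picture, and by the affine invariance established above the same inequality holds for the original section, which is the claim.

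\emph{Main obstacle.} The only delicate point is the passage from a diameter-type bound to the volume-type bound in the statement: the bare minimization argument on its own only delivers $\M u(S)\,(\operatorname{diam}S)^{n}\gtrsim t^{n}$, which is far too weak for thin sections. What closes this gap is precisely the affine invariance of the product $\M u(S)\,|S|$ together with John's normalization, which simultaneously controls $\operatorname{diam}S$ from above and $|S|$ from below. So the ``hard part'' here is conceptual --- one must normalize before estimating --- rather than any nontrivial computation.
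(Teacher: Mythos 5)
Your proof is correct, and it is essentially the standard argument behind this classical lemma (which the paper itself does not prove but only cites from the references \cite{[F],[G],[TW1]}): subtract the supporting plane, use the affine invariance of $\M u(S)\cdot|S|$ and of the height $t$ together with John's lemma to normalize $B_1\subset S\subset B_{C_n}$, and then run the Aleksandrov subgradient argument to show $B_{t/(2C_n)}\subset\partial u(S)$. The one point worth stating explicitly is the (easy, convexity-based) fact that the interior minimizer of $g=u-q\cdot x$ over $\overline S$ yields $q\in\partial u(x_1)$ as a subgradient relative to the whole domain, not just relative to $S$, so that it genuinely contributes to $\M u(S)=|\partial u(S)|$.
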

	
	The inverse inequality holds  if $\M u$ satisfies  the doubling condition.
	\begin{Lemma}[\cite{[CG]}, Lemma 1.1]\label{lem:size section 2}
		Suppose that $u \in C(\overline{U})$ and
		$\M u$ satisfies the doubling condition, then for any convex set $E \subset U$,
		\[  |\M u (E) |\cdot |E| \leq C \inf_{\ell \text{ is linear }} ||u-\ell ||_{L^{\infty} (E)}^n, \]
		where the infimum is taken for all  linear function $\ell$  and $C$ depends only on $n$ and the doubling constant.
	\end{Lemma}

	The following Lemma \ref{lem:size section 3} is a  modification of \cite[Lemma 2.2]{[Mo1]}.
	\begin{Lemma} \label{lem:size section 3}
		Suppose that $u \in C(\overline{U})$, $\M u \geq \mu$, and $\mu$ satisfies the doubling condition. Then  the
conclusion of \ref{lem:size section 2} still holds true,
		where the $C$ depends only on $n$ and the doubling constant of $\mu$.
	\end{Lemma}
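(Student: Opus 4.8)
The plan is to reduce the statement --- which should be read as
\[
|\mu(E)|\cdot|E|\ \le\ C\inf_{\ell\text{ linear}}\|u-\ell\|_{L^{\infty}(E)}^{n},\qquad C=C(n,b),
\]
for every convex $E\subset U$ (the lower bound $\mu$ replacing $\M u$ on the left) --- to Lemma~\ref{lem:size section 2}, by trading the possibly non-doubling measure $\M u$ for $\mu$ through the comparison principle. We may assume $|E|>0$ (otherwise the claim is trivial), and it suffices to treat $E\subset\subset U$: the general case follows by exhausting $E$ from inside by convex sets $E_k\subset\subset U$ with $E_k\uparrow E$, applying the internal case to each $E_k$ and letting $k\to\infty$ (here $\mu(E_k)\uparrow\mu(E)$, $|E_k|\to|E|$, and $\inf_{\ell\text{ linear}}\|u-\ell\|_{L^{\infty}(E_k)}\le\inf_{\ell\text{ linear}}\|u-\ell\|_{L^{\infty}(E)}=:M$, which in passing forces $\mu(E)<\infty$). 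So assume $E\subset\subset U$; then $\mu(E)\le\M u(E)=|\partial u(E)|<\infty$. Fixing a linear $\ell_0$ with $\|u-\ell_0\|_{L^{\infty}(E)}=M$ and replacing $u$ by $u-\ell_0$ --- which changes neither $\M u$ nor $M$ --- I may assume $|u|\le M$ on $\overline{E}$.

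Next I introduce the auxiliary function: let $v\in C(\overline{E})$ be the Aleksandrov solution of $\M v=\mu$ in $E$ with $v=u$ on $\partial E$, which exists because $\overline{E}$ is a bounded convex body and $\mu(E)<\infty$. Because $v=u$ on $\partial E$ and $\M v=\mu\le\M u$ as measures on $E$, the comparison principle (Lemma~\ref{lem:comparison principle 01}) --- which asserts that among two convex functions with the same boundary values the one with the smaller Monge-Amp\`ere measure dominates --- yields $v\ge u$ on $\overline{E}$, hence $v\ge u\ge -M$. On the other hand $v$ is convex on the convex set $E$, so it attains its maximum on $\partial E$, whence $v\le\max_{\partial E}v=\max_{\partial E}u\le M$ on $\overline{E}$. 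Therefore $\|v\|_{L^{\infty}(E)}\le M$.

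Finally, note that, unlike $\M u$, the measure $\M v=\mu$ (restricted to $E$) is an honest doubling measure: the restriction of a measure satisfying Definition~\ref{def:doubling constant} to a convex subset again satisfies it with the same constant $b$, since dilating a convex set about its mass center by the factor $c_n$ keeps it inside that set. Hence Lemma~\ref{lem:size section 2} applies to $v$ on $E$ and gives
\[
|\mu(E)|\cdot|E|=|\M v(E)|\cdot|E|\le C\inf_{\ell\text{ linear}}\|v-\ell\|_{L^{\infty}(E)}^{n}\le C\|v\|_{L^{\infty}(E)}^{n}\le C M^{n},
\]
with $C=C(n,b)$, which is the assertion. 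I do not expect a genuine obstacle here: the whole content is the idea of replacing $\M u$ by $\mu$ --- forced on us because $\M u$ itself need not be doubling (Wang's strictly convex, non-$C^{1,1}$ examples), so Lemma~\ref{lem:size section 2} cannot be invoked for $u$ directly --- together with the observation that the comparison principle and the convex maximum principle squeeze the auxiliary solution $v$ between $u$ and $M$, so that passing to $v$ costs nothing in the oscillation. The remaining points (solvability of the Dirichlet problem for $v$, finiteness of $\mu(E)$, the boundary exhaustion) are routine; this is essentially the argument of \cite{[Mo1]}.
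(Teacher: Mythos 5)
Your proposal is correct and follows essentially the same route as the paper: solve the Dirichlet problem $\M w=\mu$ in $E$ with boundary data $u$, use the comparison principle together with the fact that a convex function attains its maximum on $\partial E$ to bound the oscillation of $w$ by that of $u$, and then apply Lemma \ref{lem:size section 2} to $w$. The extra bookkeeping you add (exhaustion of $E$, normalization by a near-optimal linear function, restriction of the doubling measure) only makes explicit what the paper leaves implicit.
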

	\begin{proof}
		Let $E$ be a convex set and $\ell$ be a linear function. Let $w$ be a convex solution to the Dirichlet problem
		\[ \det D^2 w =\mu \text{ in } E \text{ and } w = u \text{ on } \partial E.\]
		By comparison principle, $w-\ell \geq u-\ell $ in E.  Note that $w$ is convex, and
		\[  \sup\{w(x)-\ell(x) :\; x\in E\}\leq \sup\{w(x)-\ell(x) :\; x\in \partial E\}=\sup\{u(x)-\ell(x) :\; x\in \partial E\}. \]
		Therefore,
		\[||w-\ell||_{L^{\infty} (E)} \leq ||u-\ell||_{L^{\infty} (E)} .\]
		The proof now follows from Lemma \ref{lem:size section 2}.
	\end{proof}

	%
	%
	%

	\section{The Strong Maximum Principle for Monge-Amp\`ere equations}\label{chp:3}
	
		 We recall the existence of solutions to the Dirichlet problem of Monge-AmpA\`ere equations, the original version is given by Aleksandrov, see \cite{[F],[G]}.
	\begin{Lemma}[Aleksandrov]
		Suppose $\Omega \subset \R^n$ is an open bounded convex set, $\mu$ is a finite Borel measure on $\Omega$, and $g \in C(\overline{\Omega})$ is convex. Then there exists a unique convex solution $u \in C(\overline{\Omega})$ of
		\[	\operatorname{det} D^2 u=\mu \text{ in } \Omega, \quad u =g \text{ on } \partial \Omega.\]
	\end{Lemma}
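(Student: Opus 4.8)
The plan is to obtain uniqueness directly from the Comparison Principle and existence by reducing, via a weak-$*$ approximation, to the case of a measure supported at finitely many points. Uniqueness is immediate: if $u_1,u_2\in C(\overline{\Omega})$ both solve the problem, then $\M u_1=\M u_2=\mu$ and $u_1=u_2=g$ on $\partial\Omega$, so Lemma \ref{lem:comparison principle 01}, applied twice with the roles of the two functions interchanged, yields $u_1\le u_2$ and $u_2\le u_1$, hence $u_1\equiv u_2$.

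For existence I would first solve the problem when $\mu=\sum_{i=1}^{m}a_i\delta_{p_i}$ with $a_i>0$ and $p_i\in\Omega$, by Perron's method. Consider the family of all convex $w\in C(\overline{\Omega})$ with $w\le g$ on $\partial\Omega$ and $\M w\ge\mu$ in $\Omega$; it is nonempty --- a member is obtained by adding to an affine function lying below $g$ on $\partial\Omega$ a sum of conical singularities of sufficient strength over the points $p_i$ --- and one sets $u:=\sup w$ over this family. Standard arguments show that $u$ is convex, continuous up to $\partial\Omega$ with $u=g$ there, and belongs to the family; the substantive point is that $u$ is also a supersolution, $\M u\le\mu$, whence $\M u=\mu$. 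This is checked by a local replacement argument: on a small internal section of $u$ one compares $u$, via Lemma \ref{lem:comparison principle 01}, with the solution of the corresponding Dirichlet subproblem --- elementary because $\mu$ is atomic --- and uses the maximality of $u$ to reach a contradiction unless $u$ already solves the equation there. This atomic case is the classical core of the theorem, due to Aleksandrov; see \cite{[F],[G]}.

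For a general finite Borel measure $\mu$ on $\Omega$, choose atomic measures $\mu_k\rightharpoonup\mu$ weakly-$*$ with $\mu_k(\Omega)\le\mu(\Omega)$ (partition $\Omega$ into small Borel pieces and place the mass of each piece at an interior point), and let $u_k\in C(\overline{\Omega})$ solve $\M u_k=\mu_k$, $u_k=g$ on $\partial\Omega$ by the atomic case. Let $\bar u$ solve $\M\bar u=0$, $\bar u=g$ on $\partial\Omega$. Since $\M\bar u=0\le\mu_k=\M u_k$ and $\bar u=u_k=g$ on $\partial\Omega$, Lemma \ref{lem:comparison principle 01} gives the uniform upper bound $u_k\le\bar u$. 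For a uniform lower bound --- and, crucially, a uniform modulus of continuity up to $\partial\Omega$ --- I would invoke an Aleksandrov--Bakelman--Pucci--type estimate (Lemma \ref{lem:size section abp} and its boundary analogue): it controls how far below its supporting affine functions a convex function with total Monge--Amp\`ere mass $\mu_k(\Omega)\le\mu(\Omega)$ can dip, in terms of the distance to $\partial\Omega$, so that the convexity and continuity of $g$ give $\liminf_{x\to x_0}u_k(x)\ge g(x_0)$ uniformly in $k$ at each $x_0\in\partial\Omega$. Convex functions that are uniformly bounded are uniformly Lipschitz on compact subsets of $\Omega$; combined with the boundary modulus of continuity, this makes $\{u_k\}$ equicontinuous on $\overline{\Omega}$, so a subsequence converges uniformly on $\overline{\Omega}$ to a convex $u\in C(\overline{\Omega})$ with $u=g$ on $\partial\Omega$. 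Finally, by the weak continuity of the Monge--Amp\`ere operator under local uniform convergence, $\M u_k\rightharpoonup\M u$; since also $\mu_k\rightharpoonup\mu$, we conclude $\M u=\mu$, so $u$ is the desired solution.

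I expect the main obstacle to be the atomic base case: constructing a convex function with a prescribed finite atomic Monge--Amp\`ere measure and prescribed (merely convex, not affine) boundary values, and verifying via the Comparison Principle that it solves the equation exactly rather than being only a sub- or supersolution. Everything else --- the weak-$*$ approximation, the comparison and ABP estimates, the equicontinuity of convex functions, and the weak continuity of the operator --- is routine once that case is granted.
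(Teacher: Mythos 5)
The paper does not actually prove this lemma: it is quoted as a classical result of Aleksandrov with a pointer to \cite{[F],[G]}, and your outline is precisely the standard argument from those references (uniqueness by two applications of Lemma \ref{lem:comparison principle 01}; existence by Perron's method for atomic measures, then weak-$*$ approximation, uniform barriers, equicontinuity, and weak continuity of $\M$ under locally uniform convergence). So in substance you are reconstructing the proof the paper is citing, and the skeleton is sound. Two caveats are worth flagging. First, the textbook versions in \cite{[F],[G]} assume $\Omega$ \emph{strictly} convex with $g\in C(\partial\Omega)$ arbitrary, whereas the statement here has $\Omega$ merely convex but $g$ convex on $\overline{\Omega}$; the place where this matters is exactly the step you wave through as ``standard arguments show that $u$ is \dots continuous up to $\partial\Omega$ with $u=g$ there'' in the atomic Perron step, and again in the limit. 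On a non-strictly convex domain one cannot tilt supporting hyperplanes of $\partial\Omega$ to build boundary barriers; instead one must use supporting affine functions of the convex extension $g$ together with Aleksandrov's maximum principle ($|u(x)|^n\le C_n\,\mathrm{diam}(\Omega)^{n-1}\,\mathrm{dist}(x,\partial\Omega)\,\M u(\Omega)$ for convex $u$ vanishing on $\partial\Omega$), which is the correct form of the ``ABP-type'' estimate you invoke — note it is not Lemma \ref{lem:size section abp}, which concerns sections. You do use the supporting-affine-function idea in the approximation step, but it is needed already in the base case. Second, your uniform upper bound invokes the solution $\bar u$ of $\M\bar u=0$, $\bar u=g$ on $\partial\Omega$, which is itself an instance of the lemma being proved; this small circularity is easily removed by using instead the convex envelope $\bar u(x)=\sup\{\ell(x):\ell \text{ affine},\ \ell\le g \text{ on }\partial\Omega\}$, since every admissible $u_k$ (being convex with $u_k\le g$ on $\partial\Omega$) lies below it, and $\bar u=g$ on $\partial\Omega$ because $g$ is convex on $\overline{\Omega}$.
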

	
	\begin{proof}[Proof of Theorem \ref{thm:smp nece}]
	We write points $x\in \R^{n}= \R^{n-1} \times \R$ as $x=(x',x_n) $.	Without loss of generality, we assume that  $x_0=0 \in \Sigma_u$, $u \geq 0$,  $ u(t e_n) =0 $ when $|t|$ is small.
	Let $M>0$ be a large constant, and consider the function
		\[	   w :=\frac{  u(x)+ \delta u\left(\frac{1}{2} x+   M x_n  e_n  \right) }{1+\delta}. \]
		Then we have
		\[ \det D^2 w \geq  \det D^2 u  \text{ and } w \leq  \frac{2+\delta}{2+2\delta}u \leq u \text{ in } B_{c/M}(0).\]
		By lemma 3.1, let $v$ be the solution to
		\[ \det D^2 v = \det D^2 u  \text{ in } B_{c/M}(0), \quad v=w \text{ on }  B_{c/M}(0).\]
		Note that $\det D^2 w \geq\det D^2 v =\det D^2 u$, the comparison principle implies that
		\[   w \leq v \leq u  \text{ in } B_{c/M}(0). \]
		Hence, $u$ touches $v$ from above on $x'=0$ around $0$. Clearly, we have $v \not \equiv u$.
	\end{proof}
	
	\begin{Remark}\label{rem:example explanation}
		 In the proof of Theorem \ref{thm:smp nece}, let $E:=\left\{x:\;u(x)=v(x)\right\}$ denote the coincidence set, then we have $\partial E \subset \left\{u(x)=0\right\} \cap B_{c/M}(0)$. Assuming that Theorem \ref{thm:smp 0} is true, we have  $E \subset \Sigma_{u} $, which implies that $E \subset \left\{u(x)=0\right\} $.
	\end{Remark}
	
	\begin{Lemma}\label{lem:measure constrain double equiv}
		Suppose that $u$ and $v$ are convex functions on $\Omega$, $u \geq v$, $ \M u \leq  C \M v $, and both $\M u$ and $\M v$ satisfy the doubling condition. Then
		\[  \left(\Sigma_u \cap \Omega_0\right) = \left(\Sigma_v \cap \Omega_0\right),
		\]
		where $\Omega_0= \{ x \in \Omega :\; u(x)=v(x)\} $ is the coincidence set.
	\end{Lemma}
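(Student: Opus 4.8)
The plan is to reduce the asserted identity to the pointwise statement that, for every $x_0\in\Omega_0$, the function $u$ is strictly convex at $x_0$ if and only if $v$ is (equivalently $x_0\in\Sigma_u\iff x_0\in\Sigma_v$). Fix such an $x_0$. Since strict convexity at the interior point $x_0$, membership in $\Sigma_u$ and $\Sigma_v$, and the doubling hypotheses are all local near $x_0$, I would first shrink $\Omega$ to a small ball $B\subset\subset\Omega$ centred at $x_0$, so that $u,v\in C(\overline B)$ and all the preliminary lemmas apply on $B$ without boundary technicalities. I also record at the outset that $u\ge v$ and $u(x_0)=v(x_0)$ force $\partial v(x_0)\subseteq\partial u(x_0)$.

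For the easy implication, assume $v$ is strictly convex at $x_0$. By Lemma \ref{lem:Pogorelov Line 1} (using that $\M v$ is doubling), $v$ admits an internal section $S_{\tau,q}^v(x_0)\subset\subset B$ with $q\in\partial v(x_0)\subseteq\partial u(x_0)$. Since $v\le u$ and $u(x_0)=v(x_0)$, every $x$ with $u(x)<u(x_0)+q\cdot(x-x_0)+\tau$ also satisfies $v(x)<v(x_0)+q\cdot(x-x_0)+\tau$, so $S_{\tau,q}^u(x_0)\subseteq S_{\tau,q}^v(x_0)\subset\subset B$; thus $u$ has an internal section at $x_0$, and Lemma \ref{lem:Pogorelov Line 1} gives that $u$ is strictly convex at $x_0$. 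This half uses neither the measure inequality nor anything beyond Lemma \ref{lem:Pogorelov Line 1}.

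For the converse, assume $u$ is strictly convex at $x_0$. By Lemmas \ref{lem:Pogorelov Line 1} and \ref{lem:Pogorelov Line}, $u$ has an internal section $S_{t_0,p}^u(x_0)\subset\subset B$ and is $C^1$ at $x_0$; hence $\partial u(x_0)=\{p\}$, so $\partial v(x_0)=\{p\}$ as well, and $u,v$ share the supporting plane $\ell(x):=u(x_0)+p\cdot(x-x_0)$, with $\ell\le v\le u$. For $0<r\le t_0$ set $S_r:=S_{r,p}^u(x_0)$, an internal section of $u$, and $T_r:=S_{r,p}^v(x_0)=\{x\in B:v(x)<\ell(x)+r\}$, a convex set containing $S_r$. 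I would then combine three estimates: (i) the Aleksandrov estimate, Lemma \ref{lem:size section abp}, gives $\M u(S_r)\,|S_r|\ge c_n r^n$; (ii) $\M u\le C\,\M v$ together with $S_r\subseteq T_r$ gives $\M u(S_r)\le C\,\M v(S_r)\le C\,\M v(T_r)$; (iii) the size estimate for $v$, Lemma \ref{lem:size section 2} (using doubling of $\M v$), gives $\M v(T_r)\,|T_r|\le C\,\|v-\ell\|_{L^\infty(T_r)}^n\le C r^n$, since $0\le v-\ell<r$ on $T_r$. Chaining (i), (ii), (iii) produces the purely volumetric bound $|T_r|\le C'|S_r|$ with $C'$ universal.

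The final, geometric, step is the one I expect to be the main obstacle: upgrading $|T_r|\le C'|S_r|$ to a diameter bound on $T_r$. By the engulfing property, Lemma \ref{def:engulfing condition}, $S_r$ is $\kappa$-balanced about $x_0$, hence (by John's Lemma) comparable, up to a universal factor, to an ellipsoid $\mathcal E_r$ centred at $x_0$. Since $T_r$ is convex, contains $\mathcal E_r$, and satisfies $|T_r|\le C'|S_r|\lesssim|\mathcal E_r|$, a cone-volume argument --- a convex body containing a centred ellipsoid $\mathcal E_r$ and a point at distance $D$ in a direction $e$ has volume $\gtrsim D\,|\mathcal E_r|/a_e$, where $a_e$ is the semi-axis of $\mathcal E_r$ along $e$ --- forces $T_r$ into a universal dilate of $\mathcal E_r$ about $x_0$, so $\operatorname{diam}(T_r)\le C''\operatorname{diam}(S_r)$. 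Because the nested sections $S_r$ shrink to $\{x_0\}$ as $r\to 0$, we get $\operatorname{diam}(S_r)\to 0$, hence $\operatorname{diam}(T_r)\to 0$, and since $x_0\in T_r$ it follows that $T_r\subset\subset B$ for all small $r$. Thus $v$ has an internal section at $x_0$, and Lemma \ref{lem:Pogorelov Line 1} gives that $v$ is strictly convex at $x_0$, completing the pointwise equivalence. The points that need care are the localization to $B$ (that it preserves the doubling conditions, the internal sections, and the conclusion), the convex-geometry lemma passing from volume comparability plus balancedness to diameter control, and tracking the several universal constants --- the Aleksandrov constant, the doubling constants of $\M u$ and $\M v$, the engulfing/John constant, and the given $C$ --- into the single $C'$.
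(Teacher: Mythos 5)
Your proposal is correct and follows essentially the same route as the paper: the same subgradient inclusion $\partial v(x_0)\subset\partial u(x_0)$ and section inclusion for the easy direction, and for the converse the same chain combining the Aleksandrov estimate for $u$'s section, the doubling size estimate (Lemma \ref{lem:size section 2}/\ref{lem:size section 3}) for $v$'s section, and $\M u\le C\M v$ to get volume comparability, concluding via Lemma \ref{lem:Pogorelov Line 1}. The only difference is expository: you spell out the balancedness/cone-volume argument that upgrades $|S_{t}^v|\le C|S_{t}^u|$ to containment in a universal dilate (which the paper asserts in one line) and finish by producing an internal section for $v$ rather than by shrinking the intersection of its sections to $\{x_0\}$ — two equivalent uses of the same lemma.
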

	\begin{proof}
		Suppose $x_0 \in  \Omega_0$. Since $u \geq v$ and $u(x_0)=v(x_0)$,  we have $\partial v(x_0) \subset \partial u(x_0)$,
		and  for any subgradient $p \in  \partial v(x_0)$ and any $ t>0$, we have
		\[ S_{t,p}^u(x_0) \subset S_{t,p}^v(x_0) . \]
		
		If $x_0 \notin \Sigma_v $, then $v$ is $C^1$ and strictly convex at $x_0$. If $t>0$ is small, then
		\[S_{t,\nabla v(x_0)}^u(x_0) \subset S_{t,\nabla v(x_0)}^v(x_0) \subset\subset \Omega  . \]
		Recalling Lemma \ref{lem:Pogorelov Line 1},  we find that $x_0 \notin \Sigma_u$.

		If $x_0 \notin \Sigma_u$, then $u$ is  $C^1$ and strictly convex at $x_0$. By noting that $\emptyset \neq  \partial v(x_0) \subset \partial u(x_0) = \{ \nabla u(x_0)\}$, we have $\partial v(x_0) =\partial u(x_0) =\{ \nabla v(x_0)\}=\{ \nabla u(x_0)\}$. By choosing $t>0$ sufficiently small such that $S_{t}^u(x_0)$ is an internal section, we can apply Lemmas \ref{lem:size section abp} and \ref{lem:size section 2} to obtain that
		\[ c| \M v (S_{t}^v(x_0))| \cdot |S_{t}^v(x_0)|
		\leq t^{n}
		\leq  C| \M u (S_{t}^u(x_0))| \cdot |S_{t}^u(x_0)|
		\]
		Note that
		\[| \M u \left(S_{t}^u(x_0)\right)| \leq | \M u \left(S_{t}^v(x_0)\right)| \leq C| \M v \left(S_{t}^v(x_0)\right)|.\]
		Thus,
		\[ |S_{t}^v(x_0) | \leq C |S_{t}^u(x_0) |.  \]
		Since internal sections are convex sets, we conclude that
		\[  S_{t}^u(x_0) \subset S_{t}^v(x_0) \subset C S_{t}^u(x_0),\]
		where $C$ is independent of $t$. Recalling Lemma \ref{lem:Pogorelov Line 1} again,  we have
		\[ x_0 \notin \Sigma_u
		\Rightarrow \bigcap_{t> 0}S_{t}^u(x_0)= \{x_0\} \Rightarrow \bigcap_{t> 0} S_{t}^v(x_0)= \{x_0\}
		\Rightarrow x_0 \notin  \Sigma_v.\]
	\end{proof}

	\begin{proof}[Proof of Theorem \ref{thm:smp 0}]
		Let $\Omega_0= \{x \in \Omega:\;  u=v\}$ denote the coincidence set, it suffices to show that $\left(\partial \Omega_0 \cap \Omega \right)  \subset \Sigma_{u}\cap \Sigma_v  $. 
Otherwise, without loss of generality, we assume   that $ \left(\partial \Omega_0 \cap \Omega \right)  \not \subset \Sigma_v$. Then we can take a point $X \in \partial \Omega_0 \cap \Omega\setminus \Sigma_v $.
According to Lemma \ref{lem:measure constrain double equiv}, we can always assume that both $u$ and $v$ are $C^{1}$ and strictly convex near $X$ in the following discussion.
		By the engulfing property of sections, we have
		\[S_{ct_0}^{v}(X) \subset S_{t_0}^{v}(Y) \subset S_{Ct_0}^{v}(X) \subset \subset\Omega \]
		provided that $Y \in  S_{c^2t_0}^{v}(X)$ and  $t_0< t(X,u)$ is small enough. We now choose $Y \in  S_{c^2t_0}^{v}(X) \setminus  \Omega_0$ and $ \bar t=  \sup\{ t :\; S_t^{v}(Y) \subset \subset \Omega \setminus \Omega_0\}$.  Since $X\in S_{t_0}^{v}(Y)\cap \partial \Omega_0$, we have $\bar{t} < t_0 $ and $S_{\bar t}^{v}(Y)$ is an internal section.
		%
		%
		
		Hence, without loss of generality,  we now assume that $\partial S_{\bar t}^{v}(Y)$ touches $  \Omega_0$ at point $0$,
		\[ u(0)=v(0)=0,\; \nabla u(0)=\nabla v(0)=0,\;  u,v \geq 0,  \text{ and } \nabla v(Y) = c_1e_n \text{ for } c_1 > 0.\]
		Thus, $S_{\bar t}^{v}(Y) =\{ x:\; v(x ) \leq c_1 x_n\}$.   Let
		\[  U=\{ x:\; v(x) < c_2 x_n\},\]
		where $c_2 \in (0,c_1)$ is a small constant to be fixed later. We have
		\[ U\subset S_{\bar t}^{v}(Y)  \subset \{ x_n > 0\} \text{ and } \partial U \cap  \partial S_{\bar t}^{v}(Y)  \subset    \{ v(x) = 0\}= \{0\},\]
		and $\partial U$ is represented by a  $C^1$ and strictly convex function around   $0$. Define
		\[	\sigma(s ) :=\inf\left\{ u(x) -v(x)  :\; x\in \partial U,   x_n \geq  s   \right\} \text{ for } s>0,\]
		$\sigma(s )$ is positive and satisfies $\lim_{s\to 0} \sigma(s)=0$.
		
		Let $M$ be a large constant, we consider the function
		\[\Phi (x) = v\left( \T x  \right),  \text{ where }\T x = \frac{1}{2} x+   M x_n  e_n   .\]
		Clearly,
		\begin{equation}\label{eq:det u geq w U}
			\det  D^2 \Phi(x) \geq
			c(n) {M}^2 \det D^2 v(\T x) =c(n) {M}^2   f(\T x) \geq c(n,\lambda,\Lambda) {M}^2 f(x).
		\end{equation}
		By the $C^1$ regularity of $v$ at $0$, we have
		\[  \begin{split}
			\Phi (x)
			& =  v\left(\frac{1}{2} x+   M x_n e_n  \right)\\
			&\leq  \frac{1}{2} v(x)+\frac{1}{2} v\left(2M x_n e_n \right)\\
			& =  \frac{1}{2} v(x) + o(|x_n|)  \text{ on } \partial U\cap \{    x_n \leq  s_0 \}
		\end{split}
		\]
		and
		\[u(x)  \geq  \frac{1}{2}u(x)+\frac{c_2}{2} x_n   \geq   \Phi(x)+\frac{c_2}{4} x_n  \text{ on } \partial U\cap \{    x_n \leq  s_0 \}\]
		provided that $s_0=s_0(c_2) >0 $ is small enough.

		%
		
		We now take  $\delta \in (0,  \sigma(s_0))$ and consider the convex function
		\[ w = \frac{ v+ \delta \Phi+ \frac{c_2\delta}{4}x_n}{1+\delta}. \]
	 By the definition of $\sigma$ we have
		\[ 		u \geq   w   \text{ on } \partial  U. \]
	Note that  $u,v$ and $w$ are all $C^1$ at $0$. We claim that if $c_2>0$ is small, then
		\begin{equation}\label{eq:u geq w U}
			u \geq   w   \text{ in }  \overline    U.
		\end{equation}
		This will imply that $\left(\nabla  u(0)- \nabla w(0)\right) \cdot e_n \geq 0$. Recalling that $\nabla u(0)= 0$ and $\nabla w(0)= \frac{c_2\delta}{4} e_n$, we obtain $c_2<0$,   contradicting  the fact $c_2>0$, and thus 
Theorems 1.2 is proven.

Recalling the equation
		\[\det D^2 u \equiv \det D^2 v=f.\]	
		By taking a larger $M$, the arithmetic-geometric inequality now gives
		\[\begin{split}
			\det D^2w   \geq \left[\frac{\left( \det D^2 v\right)^{1/n} +\delta \left( \det D^2  \Phi\right)^{1/n}}{1+\delta} \right]^n
			\geq (1+c{M}^{2/n} \delta) f >\det D^2 u
		\end{split}\]
		in the sense of measure (Aleksandrov).  Hene \eqref{eq:u geq w U} follows from the comparison principle.

	\end{proof}
	\begin{Lemma}\label{lem:legendre t 1}
	Suppose that $u$ and $ w $ are strictly convex functions on $\Omega$, and  $V \subset \subset \Omega$. Assume that
	\[  u \leq w \leq u+\delta \text{ in } V \text{ and } u=w \text{ on } \partial V \]
	for some $\delta >0$. Then
	\[ \partial w (V) \subset  \partial u(V).\]
	Moreover,
	\[   u^* -\delta \leq w^* \leq u^* \text{ on } \partial w(V),  \]
	where $u^*$ and $w^*$ are the Legendre transformations of $u$ and $w$, respectively.
\end{Lemma}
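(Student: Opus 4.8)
The plan is to reduce both assertions to the standard duality fact that $p\in\partial\phi(z)$ if and only if Young's equality $\phi(z)+\phi^*(p)=z\cdot p$ holds, combined with an elementary convex minimization that, for each $p\in\partial w(V)$, produces a point of $V$ at which $u$ admits $p$ as a subgradient.

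First I would prove the inclusion $\partial w(V)\subset\partial u(V)$. Fix $p\in\partial w(V)$ and choose $x_0\in V$ with $p\in\partial w(x_0)$, and set $g(x):=u(x)-p\cdot x$, a convex function on $\Omega$. The key observation is that $g$ dominates $g(x_0)$ on $\partial V$: indeed $p\in\partial w(x_0)$ gives $w(x)-p\cdot x\ge w(x_0)-p\cdot x_0$ for all $x\in\Omega$, and on $\partial V$ the hypotheses $u=w$ and $u(x_0)\le w(x_0)$ yield $g(x)=w(x)-p\cdot x\ge w(x_0)-p\cdot x_0\ge g(x_0)$. Since $\overline V\subset\Omega$ is compact, $g$ attains its minimum over $\overline V$; because its interior value $g(x_0)$ does not exceed any boundary value, the minimum is attained at some $y\in V$. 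Finally, as $V$ is open and $g$ is convex on all of $\Omega$, an interior minimizer of $g$ over $\overline V$ is automatically a global minimizer of $g$ over $\Omega$ (move from $y$ toward an arbitrary point of $\Omega$ along a segment and use convexity for a small parameter). Hence $0\in\partial g(y)$, i.e.\ $u(x)\ge u(y)+p\cdot(x-y)$ on $\Omega$, so $p\in\partial u(y)\subset\partial u(V)$.

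Next I would derive the two-sided bound on the Legendre transforms using the same $p,x_0,y$. By Young's equality, $w^*(p)=x_0\cdot p-w(x_0)$ and $u^*(p)=y\cdot p-u(y)$. Since $x_0\in V$, the inequality $u(x_0)\le w(x_0)$ gives $w^*(p)=x_0\cdot p-w(x_0)\le x_0\cdot p-u(x_0)\le\sup_{x}\{x\cdot p-u(x)\}=u^*(p)$. Since $y\in V$, the inequality $u(y)\ge w(y)-\delta$ gives $u^*(p)=y\cdot p-u(y)\le y\cdot p-w(y)+\delta\le w^*(p)+\delta$. Together these give $u^*-\delta\le w^*\le u^*$ on $\partial w(V)$.

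I do not expect a serious obstacle here. The only step needing care is the passage in the first part from a minimizer of $g$ on $\overline V$ to a global supporting plane of $u$ on $\Omega$: this is where the openness of $V$ and the convention that $\partial u(\cdot)$ refers to planes supporting $u$ throughout $\Omega$ enter, and where one must check that the minimizer really lies in the interior rather than on $\partial V$. The strict convexity of $u$ and $w$ is not actually needed for the present statement (it becomes relevant in the later applications, where differentiability of $u^*$ and $w^*$ is used), so I would invoke it only for consistency with the ambient hypotheses.
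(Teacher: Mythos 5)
Your proposal is correct and takes essentially the same route as the paper: the paper's step of sliding the supporting hyperplane $\ell$ of $w$ at $x_0$ downward until it touches $u$ at some $X\in V$ is exactly your minimization of $g(x)=u(x)-p\cdot x$ over $\overline V$, and the bounds $u^*-\delta\le w^*\le u^*$ come from the same Young's-equality computation at the two touching points. The only difference is that you spell out why the contact point lies in the interior of $V$ (and note strict convexity is not needed), details the paper leaves implicit.
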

\begin{proof} 
	Assuming that $x_0 \in V$ and $p \in \partial w(x_0)$, we can move the supporting hyperplane function
	$\ell(x)$ of $w$ at $x_0$ downward until it touches $u$ at some point $X \in V$. Then, $p \in \partial u(X)  \subset \partial u(V)$. Moreover,
	\[ u(X)\leq \ell(X):= (X- x_0 ) \cdot p- w(x_0) \leq w(X) \leq u(X)+\delta. \]
	Recalling the well-known fact
	\[ u^*(p)= X\cdot p-u(X) \text{ and } w^*(p)= x_0 \cdot p- w(x) ,\]
	we obtain
	\[  u^*(p) -\delta \leq w^*(p) \leq u^*(p) .  \]
\end{proof}

\begin{proof}[Proof of Theorem \ref{thm:smp 1}]
		 We follow the same proof technique as Theorem \ref{thm:smp 0}, and only need to verify \eqref{eq:det u geq w U}, that is to say the set 
		\[  V :=\{ x \in U:\;  u(x)<w(x)\} \]
is empty. We have in   $V$,
		 \begin{equation}\label{eq:smp p-1}
			v(x) \leq u(x) \leq w(x)  \leq v(x)+C\delta \leq u(x)+C\delta   .
		\end{equation}
		Applying Lemma \ref{lem:legendre t 1}, we now have
		\[\partial w( V ) \subset \partial u( V ),\]
		and
		\[  |w^* \circ \nabla w -u^* \circ \nabla u  | \leq C \delta \text{ in }   V.\]
		If  $c_2$ is small enough, then
		\[	|\nabla w(x) -\nabla v(x)|\leq \delta {M}^{2}\cdot o(|x|) +c_2\delta \leq  C\delta \text{ in }    V \]
		and
		\[	\begin{split}
			\left|w^*\circ\nabla w(x) -v^*\circ\nabla v(x)\right|
			& = \left| x\cdot \nabla w(x)-w(x)- \left( x\cdot \nabla v(x)-v(x)\right) \right|\leq C \delta.
		\end{split}\]
		
		Therefore, we have
		\[
		\det D^2 v(x)
		=  f(x) \frac{F\left(x,v\right)  }{G\left( \nabla v, v^*\circ \nabla v \right)}  \geq (1-C\delta)f(x) \frac{F\left(x,u\right)  }{G\left( \nabla w, w^*\circ \nabla w \right)} \]
		and
		\[ \begin{split}
			(1+\delta)\left( \det D^2 w(x)\right)^{1/n}
			& \geq \left( \det D^2 v(x)\right)^{1/n} +\delta \left( \det D^2  \Phi(x)\right)^{1/n}\\
			& \geq \left((1-C\delta) f(x) \frac{F\left(x,u\right)  }{G\left( \nabla w, w^*\circ \nabla w \right)}\right)^{1/n} +cM^{2/n}\delta f^{1/n} (x)   \\
			& \geq  \left(1+c{M}^{2/n} \delta\right)\left(f(x) \frac{F\left(x,u\right)  }{G\left( \nabla w, w^*\circ \nabla w \right)} \right)^{1/n}
		\end{split}\]
		in the sense of measure (Aleksandrov). This is
		\begin{equation}\label{eq:theorem rem 2}
			G\left( \nabla w, w^*\circ \nabla w \right)\det D^2 w  \geq (1+c{M}^{2/n} \delta)f(x)F\left(x,u\right) .
		\end{equation}
		
		Hence, using the related results from \eqref{eq:smp p-1}-\eqref{eq:theorem rem 2} we have
		\begingroup
		\allowdisplaybreaks  \begin{align*}
			\int_{\nabla u( V )}G\left(p, u^*\right)dp
			& =	\int_{ V }G\left(p\circ \nabla u, u^*\circ \nabla u\right) \det D^2 u \cdot dx  \\
			& =	\int_{ V }f(x)F\left(x,u\right)dx  \\
			& \leq  (1-c{M}^{2/n} \delta)\int_{V}G\left(\nabla w,w^*\circ \nabla w\right)\det D^2 w \cdot dx \\
			& = (1-c{M}^{2/n} \delta) \int_{\nabla w( V )}G\left(p,w^* \right)dp \\
			& \leq (1-c{M}^{2/n} \delta)(1+C\delta) \int_{\nabla w( V )}G\left(p,u^*\right)dp \\
			& \leq (1-c{M}^{2/n} \delta) \int_{\nabla w( V )}G\left(p,u^*\right)dp \\
			& \leq (1-c{M}^{2/n} \delta) \int_{\nabla u( V )}G\left(p,u^*\right)dp < \infty.
		\end{align*}
		\endgroup
		Thus,  $ V =\emptyset$, and \eqref{eq:u geq w U} is proved.
	\end{proof}

	\begin{proof}[Proof of Theorem \ref{thm:smp 3}]
		Let $\Omega_0= \{x \in \Omega:\;  u=v\}$ denote the coincidence set.  Assume on the contradiction that $\Omega_0 \neq \emptyset$ and $  \Omega_0  \subsetneq \Omega$, then there is a point $X \in \partial \Omega_0 \cap \Omega $.
		Define
		\[\bar t(x)=  \sup\{ t :\; S_t^{v}(x) \subset \subset \Omega \setminus \Omega_0\}  \text{ for each point } x \in \Omega \setminus \Omega_0.\]
		
		We claim that there exists a point  $Y\in \Omega \setminus \Omega_0 $ such that the section $S_{\bar{t}(Y)}^v(Y) \subset \subset \Omega$.  Once we find such a point $Y$, we then assume that $\partial S_{\bar t(Y)}^{v}(Y)$ touches $  \Omega_0$ at point $0 \in \Omega$, and the rest of the proof is the same as that of Theorems \ref{thm:smp 1} and \ref{thm:smp 3}.

		We choose points $X_{i} \in \Omega \setminus \Omega_0$ such that $\lim_{i \to \infty} X_{i}=X$, and let $E_i= S_{\bar{t}(X_{i})}^v(X_{i})$. If our claim is false, then we would have $\partial E_i \cap \partial \Omega \neq \emptyset$. Then,
		\[\lim_{i \to \infty} \operatorname{diam}(E_i) \geq \lim_{i \to \infty} \operatorname{dist}(X_i,\partial\Omega)=\operatorname{dist}(X,\partial\Omega)>0. \]
		By taking a subsequence, we may assume that $ t_0=\lim_{i \to \infty} \bar{t}(X_{i})\in [0,\infty)$.	Since $v$ is $C^1$ and strictly convex, $E_i \to S_{t_0}^v(X)$  locally in $\Omega$ in the Hausdorff sense as $i \to \infty$.
		Therefore, $ \operatorname{diam}(S_{t_0}^v(X)) > 0 $, which implies
		\[t_0 >0 \text{ and then }\operatorname{dist}(X,\partial S_{t_0}^v(X)) >0.\]
		Thus, $X \in E_i$ provided that $i$ is large enough, which is a contradiction to the fact that $ E_i   = S_{\bar{t}(X_{i})}^v(X_{i})  \subset \Omega \setminus \Omega_0$.
	\end{proof}
	
	\begin{Remark}
		If we replace the Lipschitz property of $F(x,r)$  with respect to $r$ by the non-increasing property with respect to $r$, then Theorems \ref{thm:smp 1} and \ref{thm:smp 3} still hold.
	\end{Remark}
	
	\begin{Remark}
		Using the Legendre transform, one can also obtain a strong maximum principle for $C^1$ strictly convex solutions of
		\[	\det D^2 u(x)=\frac{1}{g\left(\nabla u(x)\right)}\cdot\frac{ F\left(x, u\right) }{G\left(\nabla u, u^* \circ \nabla u\right)}.\]
		 Nevertheless,    whether or not a strong maximum principle holds for solutions of $\det D^2 u(x)= \frac{f(x)}{g\left( \nabla u(x) \right)} $, where  $f$ and $g$ are only bounded and positive,  remains 
an open problem.
	\end{Remark}

	\section{Applications to the Minkowski Problem}\label{chp:4}

	We first introduce some preliminaries in convex geometry, which can be found in   \cite{[P], [S]}. We also refer to \cite{[BLY], [HLY]} for  discussions on the Gauss image problem and the $L_p$ dual Minkowski problem, respectively.
	
	 We use $\K^{n+1}$ to  denote the set of convex bodies $K^{n+1}$ (compact, convex subsets with nonempty interior $\mathring{K}^{n+1}$) in $\R^{n+1}$,   $\K_{(o)}^{n+1}$ to denote  the set of convex bodies $K^{n+1}$ such that $0 \in \mathring{K}^{n+1}$, and   $\mathbb{S}^{n}$ to denote the unit sphere in $\R^{n+1}$. Let $K \in \mathcal{K}_{(o)}^{n+1}$ be a convex body. The {\sl radial function $\rho_K(t\theta ) $ of $K$ } is a homogeneous function of degree $-1$ on $\R^{n+1}$, given by
	\[  \rho_K(t\theta ) = t^{-1}\max \{ s \in \R :\; s\theta  \in K \} \text{ for } \theta  \in  \mathbb{S}^{n} \text{ and } t> 0.\]
	The {\sl normal cone of $K$ at $x \in \partial K$ } is the set-valued map
	\[N_K ( x)=\left\{\xi \in  \mathbb{S}^{n} :\; (y-x) \cdot \xi  \leq 0 \text { for all } y \in K\right\}.\]
	The {\sl Gauss image map } is
	\[\boldsymbol{\alpha}_K(A)=\bigcup_{x \in r_{K}(A)} N_K ( x)\subset  \mathbb{S}^{n} \text{ for } A \subset  \mathbb{S}^{n},\]
where $ r_{K}(A)=\{  \rho_K(\theta )\theta:\; \theta \in A\},$ and
	  $\boldsymbol{\alpha}_K \left(\left\{\theta \right\}\right)$ will be  abbreviated as $\boldsymbol{\alpha}_K (\theta )$.
	By \cite[Lemma 3.1]{[BLY]}, $\boldsymbol{\alpha}_K$ maps closed sets into closed sets.
	
	The support function of $K$ is a one-homogeneous function on $\R^{n+1}$  given by
	\[	
	h_K(t\xi )=t\sup _{x \in K}\langle x, \xi  \rangle \text{ for }  \xi  \in \mathbb{S}^{n} \text{ and } t\geq 0.
	\]
	If we further assume that $ \partial K$ is $C^1$ and strictly convex, then  $\boldsymbol{\alpha}_K$ is one-to-one. By writing $\xi =\boldsymbol{\alpha}_K  ( \theta )$, we have
	\[\xi =\frac{\rho_K(\theta ) \theta -{\nabla}_{\mathbb{S}^{n}} \rho_K(\theta )}{\sqrt{\rho_K^2(\theta )+|\nabla_{\mathbb{S}^{n}} \rho_K(\theta )|^2}}, \quad \theta =\frac{{\nabla}_{\mathbb{S}^{n}} h_K(\xi )+h_K(\xi ) \xi }{\sqrt{|{\nabla}_{\mathbb{S}^{n}} h_K(\xi )|^2+h_K^2(\xi )}}  \]
	and
	\[ h_K(\xi )=\rho_K(\theta )\langle \theta , \xi \rangle, \quad \rho_K(\theta )=|{\nabla}_{\mathbb{S}^{n}}  h_K(\xi )+h_K(\xi ) \xi| , \]
	where  $\nabla_{\mathbb{S}^{n}} $  denotes the covariant derivative with respect to the standard Riemannian metric on $\mathbb{S}^n$.

	Notice that both $\rho_K$ and $h_K$ are representation functions of $K$ and are uniquely determined by $K$. The Gauss image map has duality.  In fact, let
	\[K^*=\left\{y \in \R^{n+1} :\; x \cdot y \leq 1 \text{ for all }   x \in K\right\}\]
	be the polar body of $K$. Then,
	\[ h_K (\xi )= \frac{1}{\rho_{K^*}(\xi )} \text{ and } \rho_K (\theta )= \frac{1}{h_{K^*}(\theta )}. \]
	It was shown in \cite{[HLY]} that the Gauss image map $\boldsymbol{\alpha}_{K^*}  $ of ${K^*} $ is the inverse Gauss image map $\boldsymbol{\alpha}_{K}^*$ of $K$, where
	\[\boldsymbol{\alpha}_{K}^*(\xi )=\{ \theta \in \mathbb{S}^n:\; \xi  \in \boldsymbol{\alpha}_{K}(\theta ) \}.  \]
	For simplicity, we write  $\rho_{K}^* , h_{K}^*, \boldsymbol{\alpha}_{K}^* $ for $ \rho_{K^*}, h_{K^*},\boldsymbol{\alpha}_{K^*} $, and omit the sub-index $K$ or $K^*$ when   no confusion happens.
	
	Formally, at  $\rho(\theta)\cdot \theta\in \partial K $, the Gauss curvature of $\partial K $ is,
	\[\K= \frac{1}{\det\left({\nabla}_{\mathbb{S}^{n}}^2h(\xi )+h(\xi ) \I\right)} \text{ where } \xi=\alpha_K(\theta),\]
	and the Jacobian of the Gauss image map is
	\[ \frac{d \boldsymbol{\alpha}_K(\omega)}{d\omega }= \frac{ \rho^{n+1}}{   h } \K = \frac{\left(h^2 +|{\nabla}_{\mathbb{S}^{n}}  h |^2\right)^{\frac{n+1}{2}}}{   h\det\left({\nabla}_{\mathbb{S}^{n}}^2h +h \I\right)},  \]
	where $d\omega$  is the standard volume form on $\mathbb{S}^n$.
	Similarly,  the Gauss curvature of $\partial K^*$ is
	 \[
	\K^*= \frac{1}{\det\left({\nabla}_{\mathbb{S}^{n}}^2h^*(\theta )+h^*(\theta ) \I\right)},\]
	 and the Jacobian of the inverse Gauss image map is
	\[\frac{d \boldsymbol{\alpha}_K^*(\omega)}{d\omega }= \frac{d \boldsymbol{\alpha}_{K^*}(\omega)}{d\omega }= \frac{ (\rho^{*})^{n+1}}{   h^* } \K^*  =\frac{\left({h^{*}}^2 +|{\nabla}_{\mathbb{S}^{n}}   h^{*} |^2\right)^{\frac{n+1}{2}} }
	{  h^{*}\det\left({\nabla}_{\mathbb{S}^{n}}^2h^{*} +h^{*}  \I\right)}  .\]
	At the corresponding points of the Gauss image map, we have
	\[ \frac{ \rho^{n+1}}{   h } \K  \cdot  \frac{ (\rho^{*})^{n+1}}{   h^* } \K^* =
	\frac{d \boldsymbol{\alpha}_K(\omega )}{d\omega  } \cdot \frac{d \boldsymbol{\alpha}_K^*(\omega)}{d\omega } =1, \]
	which implies that
	\begin{equation}\label{eq:gauss image equation 1}
		\frac{d \boldsymbol{\alpha}_K{^*}(\omega)}{d\omega }= \frac{h\det\left({\nabla}_{\mathbb{S}^{n}}^2h +h \I\right)}{ \left(h^2 +|{\nabla}_{\mathbb{S}^{n}}  h |^2\right)^{\frac{n+1}{2}}   }
		\text{ and }
		\frac{d \boldsymbol{\alpha}_K(\omega)}{d\omega }=\frac{h^{*}\det\left({\nabla}_{\mathbb{S}^{n}}^2h^{*} +h^{*}  \I\right)}
		{\left({h^{*}}^2 +|{\nabla}_{\mathbb{S}^{n}}   h^{*} |^2\right)^{\frac{n+1}{2}}   }  .
	\end{equation}

		 B{\"o}r{\"o}czky, Lutwak,  Yang,  Zhang and Zhao \cite{[BLY]} studied the Gauss image problem. They gave the necessary and sufficient conditions on submeasures $\nu$ and $\mu $ on $ \mathbb{S}^{n}$ such that there exists a convex body $K \in \mathcal{K}_{(o)}^{n+1}$  satisfying $\nu \left(\boldsymbol{\alpha}_K(\cdot )\right) = \mu$, where $\boldsymbol{\alpha}_K(\cdot )$.  Suppose now $\nu$ and $\mu$ on $\mathbb{S}^n$ are absolutely continuous Borel measures with densities $g$ and $f$, respectively, the  Gauss image problem is
	reduced to the following Monge-Amp\`ere equation
	\[    g\left(\frac{{\nabla}_{\mathbb{S}^{n}}h^*+h^*(\theta ) \theta }{\sqrt{|\nabla_{\mathbb{S}^{n}} h^*|^2+{h^*}^2}}\right)
	\frac{h^*\det\left({\nabla}_{\mathbb{S}^{n}}^2h^*+h^*\I \right)  }
	{\left( {h^*}^2 +|{\nabla}_{\mathbb{S}^{n}} h^* |^2\right)^{\frac{n+1}{2}} }  =f(\theta  )  \text{ for } \theta \in \mathbb{S}^{n} ,\]
	or equivalently
	\begin{equation}\label{eq:gauss image map}
		 g\left(\boldsymbol{\alpha}_K(\theta )\right) \frac{d \boldsymbol{\alpha}_K(\omega)}{d\omega} =f(\theta ) \text{ for } \theta  \in \mathbb{S}^n
	\end{equation}
	in the sense of measure (Aleksandrov).
	They demonstrated the comparison principle \cite[Lemma 3.8]{[BLY]} for the Gauss image problem and proved the uniqueness of the convex body up to dilation. We also refer to \cite{[CWX]} for smooth solutions to the Gauss image problem.
	
	 Here and below, we assume that the boundary of the convex body is $C^1$ and strictly convex for convenience. Clearly, the Gauss image problem \eqref{eq:gauss image map} is equivalent to
	\[
	\int_{{\alpha}_K(E)} g d \omega = \int_E f d\omega \text{ for each Borel } E \subset \mathbb{S}^n.
	\]
	From now on,  whenever we write an equation or inequality involving the Jacobian of the Gauss image map or the inverse Gauss image map, we always assume it holds in the sense of measure (Aleksandrov) by regarding the terms on both sides as Borel measures.

	\begin{Theorem}[The strong maximum principle for the Gauss image problem]\label{thm:smp Gauss image}
		Suppose that  $K_1, K_2 \in \K_{(o)}^{n+1}$,   $K_1 \subset K_2 $,  their boundaries are  $C^1$ and strictly convex, and
		\[
		\frac{d \boldsymbol{\alpha}_{K_1}(\omega)}{d\omega } \leq   \tilde{f}(\theta )  \cdot \frac{\tilde{F}\left(\theta , \rho_{K_1}   \right)}{\tilde{G}\left(\boldsymbol{\alpha}_{K_1} ,  h_{K_1}  \circ \boldsymbol{\alpha}_{K_1} \right)}  \text{ in }\hat U\subset S^n
		\]
		and
		\[
		\frac{d \boldsymbol{\alpha}_{K_2}(\omega)}{d\omega } \geq    \tilde{f}(\theta ) \cdot \frac{\tilde{F}\left(\theta , \rho_{K_2}   \right)}{	\tilde{G}\left(\boldsymbol{\alpha}_{K_2} ,  h_{K_2} \circ\boldsymbol{\alpha}_{K_2}  \right) }  \text{ in }\hat U\subset S^n
		\]
		  in the sense of measure (Aleksandrov). Assume that $0<\lambda \leq \tilde{f} \leq \Lambda$, both $\tilde{F}$ and $\tilde{G}$ are positive and Lipschitz. If
		$\partial K_2$ touches $\partial K_1$ at a point $\rho_{K_2}(\theta _0)\theta _0$ with  $\theta _0 \in U$,  then $\rho_{K_2}  \equiv \rho_{K_1}$ on $\hat U$.
	\end{Theorem}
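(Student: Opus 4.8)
The plan is to transplant the two hypothesized Gauss image inequalities onto an affine hyperplane by flattening the support function of the \emph{polar} bodies $K_i^{*}$, recast them as Monge--Amp\`ere inequalities of the type handled by Theorem \ref{thm:smp 3}, and then conclude by that theorem together with a connectedness argument. First I reduce to a statement near $\theta_0$. Since $P_0:=\rho_{K_2}(\theta_0)\theta_0\in\partial K_1\cap\partial K_2$ and $K_1\subset K_2$, every supporting hyperplane of $K_2$ at $P_0$ also supports $K_1$ there; as $\partial K_1,\partial K_2$ are $C^1$ this forces $\rho_{K_1}(\theta_0)=\rho_{K_2}(\theta_0)$, $\boldsymbol{\alpha}_{K_1}(\theta_0)=\boldsymbol{\alpha}_{K_2}(\theta_0)=:\xi_0$ and $h_{K_1}(\xi_0)=h_{K_2}(\xi_0)$. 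Dualizing, $P_0^{*}:=\rho_{K_1^{*}}(\xi_0)\xi_0=\rho_{K_2^{*}}(\xi_0)\xi_0$ lies on $\partial K_1^{*}\cap\partial K_2^{*}$ with common outer normal direction $\theta_0$; moreover $K_2^{*}\subset K_1^{*}$, hence $h_{K_2^{*}}\le h_{K_1^{*}}$, and the polar of a $C^1$ strictly convex body is again $C^1$ and strictly convex.

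Next I flatten the dual support function. Choosing coordinates on $\R^{n+1}$ with $\theta_0=e_{n+1}$, I use the gnomonic chart $z\mapsto\theta(z)=(z,1)/\sqrt{1+|z|^2}$ from $\R^{n}$ onto the hemisphere $\{\theta_{n+1}>0\}$, and put $v_i(z):=h_{K_i^{*}}\big((z,1)\big)=\sqrt{1+|z|^2}\,h_{K_i^{*}}(\theta(z))$. By the above, each $v_i$ is convex, $C^1$ and strictly convex, $v_1\ge v_2$, and $v_1$ touches $v_2$ from above at $z=0$ because $Q_1(0)=Q_2(0)=P_0^{*}$, where $Q_i(z):=\big(\nabla v_i(z),-v_i^{*}(\nabla v_i(z))\big)\in\partial K_i^{*}$ is the unique point maximizing $\langle\,\cdot\,,(z,1)\rangle$ over $K_i^{*}$. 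Using one-homogeneity of $h_{K_i^{*}}$, the identity $|Q_i(z)|^2=|\nabla_{\mathbb{S}^{n}}h_{K_i^{*}}(\theta(z))|^2+h_{K_i^{*}}(\theta(z))^2$ (the Euclidean-gradient decomposition of a $1$-homogeneous function), the duality formulas of Section \ref{chp:4}, and the flattening identity $\det\!\big(\nabla_{\mathbb{S}^{n}}^{2}h_{K_i^{*}}+h_{K_i^{*}}\I\big)(\theta(z))=(1+|z|^2)^{(n+2)/2}\det D^2v_i(z)$, one computes $\rho_{K_i}(\theta(z))=\sqrt{1+|z|^2}/v_i(z)$, $\boldsymbol{\alpha}_{K_i}(\theta(z))=Q_i(z)/|Q_i(z)|$, $h_{K_i}\!\big(\boldsymbol{\alpha}_{K_i}(\theta(z))\big)=1/|Q_i(z)|$, and, by \eqref{eq:gauss image equation 1},
\[
\frac{d\boldsymbol{\alpha}_{K_i}(\omega)}{d\omega}(\theta(z))=\frac{v_i(z)\,(1+|z|^2)^{(n+1)/2}\,\det D^2v_i(z)}{\big(|\nabla v_i|^2+(v_i^{*}\!\circ\nabla v_i)^2\big)^{(n+1)/2}}.
\]
Substituting these into the two hypotheses and transplanting by the chart (all conversion factors being continuous and positive) turns them, in the sense of measure, into
\[
\det D^2v_1\le f(z)\,\frac{F(z,v_1)}{G(\nabla v_1,v_1^{*}\!\circ\nabla v_1)},\qquad
\det D^2v_2\ge f(z)\,\frac{F(z,v_2)}{G(\nabla v_2,v_2^{*}\!\circ\nabla v_2)},
\]
where $f(z)=\tilde f(\theta(z))$, $F(z,r)=\big(r(1+|z|^2)^{(n+1)/2}\big)^{-1}\tilde F\big(\theta(z),\sqrt{1+|z|^2}/r\big)$ and $G(p,s)=(|p|^2+s^2)^{-(n+1)/2}\,\tilde G\big((p,-s)/\sqrt{|p|^2+s^2},\,1/\sqrt{|p|^2+s^2}\big)$. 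The point of flattening the \emph{dual} support function is precisely that $\tilde f(\theta)$ then depends on $z$ alone (not on $\nabla v_i$), so it may play the role of the merely bounded factor $f$ of \eqref{eq:polynomial measure 0}, whereas $F,G$ --- being $\tilde F,\tilde G$ composed with smooth maps and multiplied by smooth positive factors --- are positive and locally Lipschitz on the compact region in play, hence (after a harmless cutoff) satisfy \eqref{eq:polynomial measure 1}, with $0<\lambda\le f\le\Lambda$.

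Finally, on a small convex neighborhood $W$ of $z=0$ the pair $u:=v_1\ge v_2=:v$ satisfies the hypotheses of Theorem \ref{thm:smp 3}, so on $W$ either $v_1>v_2$ or $v_1\equiv v_2$; the first alternative contradicts $v_1(0)=v_2(0)$, so $h_{K_1^{*}}\equiv h_{K_2^{*}}$, equivalently $\rho_{K_1}\equiv\rho_{K_2}$, on a neighborhood of $\theta_0$ in $\mathbb{S}^{n}$. Then $A:=\{\theta\in\hat U:\rho_{K_1}(\theta)=\rho_{K_2}(\theta)\}$ is closed in $\hat U$ by continuity, nonempty by the above, and open (whenever $\theta_1\in A$ the point $\rho_{K_1}(\theta_1)\theta_1$ lies on $\partial K_1\cap\partial K_2$ and the argument applies verbatim at $\theta_1$), hence $A$ contains the connected component of $\theta_0$ in $\hat U$, which is the claim. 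I expect the main obstacle to be the change of variables of the preceding paragraph: one must check that, after flattening $h_{K_i^{*}}$, \emph{every} quantity entering the Gauss image equation --- $\rho_{K_i}$, $\boldsymbol{\alpha}_{K_i}$, $h_{K_i}\!\circ\boldsymbol{\alpha}_{K_i}$, and the Jacobian density --- becomes a function of $z,v_i,\nabla v_i,v_i^{*}\!\circ\nabla v_i$ with exactly the dependence pattern and regularity demanded by \eqref{eq:polynomial measure 1}--\eqref{eq:polynomial measure 0}, in particular that $\tilde f$ falls into the $z$-slot rather than the gradient-slot, and one must record the flattening identity for $\det(\nabla_{\mathbb{S}^{n}}^{2}h^{*}+h^{*}\I)$; once this is in place, Theorem \ref{thm:smp 3} and the clopen argument conclude.
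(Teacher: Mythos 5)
Your proposal is correct and follows essentially the same route as the paper: pass to the polar bodies via $\rho_{K_i}=1/h_{K_i^*}$, restrict the one-homogeneous extension of $h_{K_i^*}$ to the hyperplane $\{x_{n+1}=1\}$, rewrite the Gauss-image inequalities (using \eqref{eq:gauss image equation 1} and the duality identities) as Monge--Amp\`ere inequalities of the form covered by \eqref{eq:polynomial measure 1}--\eqref{eq:polynomial measure 0} with $\tilde f$ landing in the $x$-slot, and apply Theorem \ref{thm:smp 3}. Your explicit flattening identities and the final clopen/connectedness patching (needed when $\hat U$ is not contained in a single hemisphere) are a slightly more careful write-up of the step the paper compresses into ``follows directly from Theorem \ref{thm:smp 3}.''
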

	\begin{proof}
		Note that
		\[\rho_{K_1} (\theta )= \frac{1}{h_{K_1^*}(\theta )} \text{ and }
		\rho_{K_2} (\theta )= \frac{1}{h_{K_2^*}(\theta )}. \]
		Since $K_1 \subset K_2 $, we have
		$h_{K_2^*}(\theta) \leq   h_{K_1^*} (\theta )$. The assumption that	$K_2$ touches $K_1$ at  $\rho_{K_2}(\theta _0)\theta _0$ implies that  $h_{K_2^*}(\theta _0)$ = $h_{K_1^*}(\theta _0)$.

		We refer to \cite{[P]} for discussions on one-homogeneous convex functions. See  \cite[Lemma 7.3]{[BF]}, \cite[Page 26]{[HZ]}, or \cite[Page 11]{[JW13]}   for detailed calculations.
		We write points in $\R^{n+1}$ as $(x,x_{n+1})$ and let $H=\{ x_{n+1}=1\}$ be a hyperplane in $\R^{n+1}$.
		Let $K \in \mathcal{K}_{(o)}^{n+1}$ and suppose that  $\partial K$ is $C^1$ and strictly convex. The support function of $K^*$ is
		\[h^{*}(x,x_{n+1})=r\cdot ( h^{*}\circ \theta )  ,\]
		where
		\[  r(x,x_{n+1})= \sqrt{ |x|^2+|x_{n+1}|^2} \text{ and }\theta  (x,x_{n+1})=  \frac{\left(x,x_{n+1}\right)}{\sqrt{ |x|^2+|x_{n+1}|^2}} .\]
		The restriction of $ h^{*}$ on $H$ is the positive, strictly convex function
		\[  u(x) :=r\cdot \left(h^{*}\circ \theta _H(x)\right)=\frac{\sqrt{ |x|^2+1} }{\rho  \circ  \theta _H}  \text{ for } \theta _H(x)= \frac{\left(x,1\right)}{\sqrt{ |x|^2+1}}. \]
		The Gauss image map $\boldsymbol{\alpha}_{K}  $ is described by the gradient $\nabla  u$ of $u$ with respect to $x$,
		\[     \boldsymbol{\alpha}_{K}  \circ \theta _H =\left(\nabla u , \sqrt{1-|\nabla u|^2}\right)\]
		and
		\[ \det D^2 u= \frac{\det\left( \left({\nabla}_{\mathbb{S}^{n}}^2h^{*}\right)\circ \theta _H +h^{*}\circ \theta _H \right) }{(|x|^2+1)^{\frac{n+2}{2}}}. \]
		Moreover, let $u^*$ denote the Legendre transformation of $u$ with respect to $x$, then
		\[ h  \circ \boldsymbol{\alpha}_{K}  \circ \theta _H= \frac{1}{\left(|\nabla  u|^2+|u^* \circ \nabla  u|^2\right)^{\frac{1}{2}}} .\]
		
		We now regard $\theta _H,  \rho $ as functions of $x,u $ and regard $ \boldsymbol{\alpha}_{K}, h  \circ \boldsymbol{\alpha}_{K}$ as functions of $\nabla u, u^* \circ\nabla u $, respectively. Recalling \eqref{eq:gauss image equation 1}, the equation
		\[	\frac{d \boldsymbol{\alpha}_{K}(\omega)}{d\omega } \geq (\leq)  \tilde{f}(\theta ) \cdot \frac{\tilde{F}\left(\theta , \rho    \right)}{	\tilde{G}\left(\boldsymbol{\alpha}_{K} ,  h  \circ\boldsymbol{\alpha}_{K}  \right) }  \text{ for } \theta \in \mathbb{S}^n  \]
		of the convex body $K$ implies  the following Monge-Amp\`ere equation of $u$,
		\[
		\det D^2 u(x)\geq (\leq) \tilde{f} \circ  \theta _H (x) \cdot \frac{F\left( x,   u \right)}{	G\left( \nabla u, u^* \circ\nabla u  \right)} \text{ for } x\in \R^n
		\]
		in the sense of measure (Aleksandrov); where both $G$ and $F$ are positive and Lipschitz and satisfy (1.5) by the compactness of $S^n$.  Therefore, Theorem \ref{thm:smp Gauss image} follows directly from   Theorem \ref{thm:smp 3}.
	\end{proof}
	
		%
		%
		%
	%

	The $L_p$ dual Minkowski problem corresponds to the  Monge-Amp\`ere equation  \eqref{eq:lp dual Minkowski problem}. Recalling  \eqref{eq:gauss image equation 1}, we can rewrite \eqref{eq:lp dual Minkowski problem} as
	\begin{equation}\label{eq:Lp dual Minkowski equation}
		\frac{d \boldsymbol{\alpha}_K^*(\omega)}{d\omega }= \frac{1}{g \circ \boldsymbol{\alpha}_K^*}  \cdot  \frac{  {h^*}^{q}}{  {\rho^*}^{p}   }f(\xi)
		\text{ or }
		\frac{d \boldsymbol{\alpha}_K(\omega)}{d\omega }= \frac{1}{f \circ \boldsymbol{\alpha}_K}  \cdot  \frac{  {\rho}^{q}}{  {h}^{p}   }g(\theta),\; \theta \in \mathbb{S}^n .
	\end{equation}
	 Hence,  Theorem \ref{thm:Lp DMP 0}  is equivalent to
	
	\begin{Theorem}\label{lem:Lp dual Minkowski problem}
		Assume that both $f$ and $g$ are positive and bounded functions on $\mathbb{S}^n$,  either $f$ or $g $ is Lipschitz. Suppose $K_1, K_2\in \K_{(o)}^{n+1}$, and their support functions $h_{1}$ and $ h_{2}$  are  Aleksandrov solutions to the $L_p$ dual Minkowski problem \eqref{eq:Lp dual Minkowski equation}.
		If $p>q $, then  $h_{1}\equiv h_{2}$.  If $p=q $, then $h_{1} \equiv ch_{2}$ for an arbitrary constant $c>0$.
	\end{Theorem}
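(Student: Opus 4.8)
\emph{Overall strategy.} The plan is to deduce Theorem~\ref{lem:Lp dual Minkowski problem} (hence Theorem~\ref{thm:Lp DMP 0}) from the strong maximum principle for the Gauss image problem, Theorem~\ref{thm:smp Gauss image}, by coupling it with the scaling covariance of equation~\eqref{eq:Lp dual Minkowski equation}. First I would reduce to the case that $f$ is the Lipschitz density. If instead only $g$ is Lipschitz, I pass to the polar bodies $K_1^\ast,K_2^\ast$: rewriting the first form of \eqref{eq:Lp dual Minkowski equation} in terms of $K^\ast$ via $\boldsymbol{\alpha}_K^\ast=\boldsymbol{\alpha}_{K^\ast}$, $h^\ast=h_{K^\ast}$, $\rho^\ast=\rho_{K^\ast}$ (recorded in Section~\ref{chp:4}) shows that $K$ solves the problem with data $(f,g,p,q)$ if and only if $K^\ast$ solves it with data $(g,f,-q,-p)$; in particular $p-q$, and hence its sign, is unchanged. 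Since $(K^\ast)^\ast=K$ and polarity carries dilations to dilations, proving the statement when the density playing the role of $f$ is Lipschitz yields the general case, so from now on I assume $f$ is Lipschitz.

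\emph{Regularity and translation of hypotheses.} Next I would note that the solution bodies are automatically $C^1$ and strictly convex, which is what Theorem~\ref{thm:smp Gauss image} requires. Since $0<\lambda\le f,g\le\Lambda$ and $0\in\mathring{K}_i$ force $\rho_{K_i}$ and $h_{K_i}$ to lie between two positive constants, a flat face of $\partial K_i$ would make a set of directions of positive $\mathbb{S}^n$-measure collapse under $\boldsymbol{\alpha}_{K_i}$ onto a single $\xi$, and a corner of $\partial K_i$ would make a single direction spread onto a set of normals of positive measure; either violates $\int_E f\,d\omega=\int_{\boldsymbol{\alpha}_{K_i}(E)}g\,d\omega$ (for the corners one applies this to $K_i^\ast$, which solves a companion equation with densities again bounded between positive constants). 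In the notation of Theorem~\ref{thm:smp Gauss image} the equation reads $\frac{d\boldsymbol{\alpha}_K(\omega)}{d\omega}=\tilde f(\theta)\,\tilde F(\theta,\rho_K)/\tilde G(\boldsymbol{\alpha}_K,h_K\circ\boldsymbol{\alpha}_K)$ with $\tilde f=g$, $\tilde F(\theta,\rho)=\rho^{q}$ and $\tilde G(\boldsymbol{\alpha}_K,h_K\circ\boldsymbol{\alpha}_K)=f(\boldsymbol{\alpha}_K)\,(h_K\circ\boldsymbol{\alpha}_K)^{p}$; these obey $\lambda\le\tilde f\le\Lambda$ and, by compactness of $\mathbb{S}^n$ and the Lipschitz hypothesis on $f$, are positive and Lipschitz on the compact ranges bounded away from $0$ that actually occur.

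\emph{The main step.} Equation~\eqref{eq:Lp dual Minkowski equation} scales as $\frac{d\boldsymbol{\alpha}_{tK}(\omega)}{d\omega}=t^{\,p-q}\,\tilde f(\theta)\,\tilde F(\theta,\rho_{tK})/\tilde G(\boldsymbol{\alpha}_{tK},h_{tK}\circ\boldsymbol{\alpha}_{tK})$, because $\boldsymbol{\alpha}_{tK}=\boldsymbol{\alpha}_K$ while $\rho_{tK}=t\rho_K$ and $h_{tK}=th_K$. Set $t_0=\min_{\mathbb{S}^n}(h_1/h_2)$ and $s_0=\min_{\mathbb{S}^n}(h_2/h_1)$; then $t_0K_2\subset K_1$ and $s_0K_1\subset K_2$, each inclusion is tight so the two boundaries actually touch (at a point on a ray through $0$, of direction $\theta_0\in\mathbb{S}^n$), and $t_0 s_0\le1$, so at least one of $t_0,s_0$ is $\le1$. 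Assume $t_0\le1$ (the case $s_0\le1$ is identical, with the roles of $K_1$ and $K_2$ exchanged). Since $p\ge q$ we have $t_0^{\,p-q}\le1$, so by the scaling formula $t_0K_2$ satisfies the inequality demanded of the inner body in Theorem~\ref{thm:smp Gauss image} (with its data formed from $t_0K_2$), while the exact solution $K_1$ satisfies the reverse one; as $t_0K_2\subset K_1$ with boundaries touching at $\rho_{K_1}(\theta_0)\theta_0$, Theorem~\ref{thm:smp Gauss image} applied with $\hat U=\mathbb{S}^n$ gives $\rho_{K_1}\equiv\rho_{t_0K_2}$, i.e. $K_1=t_0K_2$. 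Feeding this equality back into the two expressions for $\frac{d\boldsymbol{\alpha}_{K_1}(\omega)}{d\omega}$ forces $t_0^{\,p-q}=1$: if $p>q$ then $t_0=1$ and $K_1=K_2$; if $p=q$ then $K_1=t_0K_2$ is already the claimed equality up to dilation.

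\emph{Main difficulty.} The step that needs care is the sign bookkeeping in the main step: one has to verify that when $p\ge q$ it is the shrunken body $t_0K_2$ — not $K_1$ — whose Gauss image Jacobian becomes too small, so that $t_0K_2$ (sitting inside $K_1$) is exactly the inner subsolution Theorem~\ref{thm:smp Gauss image} asks for, and that the contact point furnished by the extremal dilation factor is precisely the touching point the strong maximum principle needs. Lining up the inner/outer assignment, the direction of the dilation, and the sign of $p-q$ is the only place where the argument could go wrong; the duality reduction, the automatic $C^1$/strict convexity, and the concluding identity $t_0^{\,p-q}=1$ are routine.
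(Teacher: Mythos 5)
Your argument is essentially the paper's: dilate one body until it touches the other, observe that dilation by $t$ multiplies the right-hand side of \eqref{eq:Lp dual Minkowski equation} by $t^{p-q}$, so that for $p\ge q$ the dilated body becomes the sub/supersolution with the correct inclusion, apply Theorem \ref{thm:smp Gauss image} with $\hat U=\mathbb{S}^n$ to force coincidence, and then read off $t^{p-q}=1$. The paper enlarges $K_2$ by $t\ge 1$ (so the enlarged body is the outer supersolution) and, when only $g$ is Lipschitz, applies Theorem \ref{thm:smp Gauss image} directly to the polar pair, whereas you shrink by $t_0\le 1$ after the $t_0s_0\le 1$ case split and perform the polarity reduction $(f,g,p,q)\mapsto(g,f,-q,-p)$ once at the start; these are cosmetic differences, and your sign bookkeeping in the main step is correct.

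The one genuine weak point is your justification that $\partial K_1,\partial K_2$ are $C^1$ and strictly convex, which Theorem \ref{thm:smp Gauss image} needs as a hypothesis. The measure-collapse argument you sketch only excludes the extreme degeneracies: an $n$-dimensional face (a set of radial directions of positive measure sent to one normal) and a vertex whose normal cone has positive measure. It does not exclude, say, a line segment in $\partial K_i$ or an edge whose normal image is lower-dimensional: for such configurations both $E$ and $\boldsymbol{\alpha}_{K_i}(E)$ have measure zero, so the identity $\int_E f\,d\omega=\int_{\boldsymbol{\alpha}_{K_i}(E)}g\,d\omega$ gives $0=0$ and no contradiction; passing to $K_i^*$ trades one intermediate-dimensional degeneracy for another and does not help. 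The correct route, and the one the paper takes, is to note that $0\in\mathring{K}_i$ makes $\rho_{K_i},h_{K_i}$ pinched between positive constants, so the Gauss-image Jacobian (equivalently, the Monge--Amp\`ere measure of the restricted support function) is bounded between positive constants, and then invoke Caffarelli's regularity theory (strict convexity plus $C^{1}$, as in \cite{[C1],[C3]} and its use in \cite{[CW],[HZ]}) rather than a pointwise measure argument. With that replacement the rest of your proof goes through as written.
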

	
	\begin{proof}
		Recalling Caffarelli's interior regularity theory for Monge-Amp\`ere equations,  we see that $\partial K$ is  $C^1$ and strictly convex
provided that  $\partial K$ has positive bounded Gauss curvature.  Therefore, our assumptions imply that both $\partial K_1$ and $\partial K_2$ are $C^1$ and strictly convex.
		
		If $h_{1} \not\equiv h_{2}$, we may assume that $h_{1} >h_{2} $ somewhere on $ \mathbb{S}^{n}$. By enlarging  $K_2$, we can replace $h_{2}$ with $h_{3}=th_{2}$ for a suitable $t\geq 1$ and assume that
		\[ h_{3} \geq h_{1}  \text { on }  \mathbb{S}^{n}  \text { and } h_{3}=h_{1} \text { at some point } X_0 \in  \mathbb{S}^{n}.\]	
		Note that  $g \circ \boldsymbol{\alpha}_{K_{3}^*}=g \circ \boldsymbol{\alpha}_{K_1^*}$ and  $  \frac{  {h_{3}^*}^{q}}{  {\rho_{3}^*}^{p}   }=t^{p-q}  \frac{  {h_{1}^*}^{q}}{  {\rho_{1}^*}^{p}   }  \geq    \frac{  {h_{1}^*}^{q}}{  {\rho_{1}^*}^{p}   } $, where $\rho_i^*$ denotes the radial function of $K_i^*$, $i=1,2,3$. If $g$ is Lipschitz, applying  Theorem \ref{thm:smp Gauss image} to the dual bodies $K_{1}^*$ and $K_{3}^*$, we obtain that $h_{1}^* \equiv h_{3}^*$. If   $f$ is Lipschitz, applying  Theorem \ref{thm:smp Gauss image} to $K_1$ and $K_3$,  we obtain $h_{1} \equiv h_{3}$.
	\end{proof}
	
	\begin{Remark}
		In general, the $C^0$ estimate for convex bodies, i.e., whether or not they contain the origin, is one of the main difficulties in the study of convex geometry.
	\end{Remark}
	

	\section{The Strong Maximum Principle for Polynomial Measure}\label{chp:5}

First, we introduce an {\sl EDT property} for non-negative functions and measures, which is closed under finite sums and multiplication by bounded positive functions.
	
	\begin{Definition}[EDT property]\label{def:EDT  condition}
		Let $\mu=fdx$ be an absolutely continuous measure on $\R^{n}$. We say $f$ (or $\mu$) satisfies the EDT property if under any affine coordinates of $\R^n$ (which is still marked as  $(x_1,\cdots,x_n)$ for convenience), we can find positive integer $l$, nonnegative constant $b$, positive constants $a,c_0,\tau_0, M_0$, $S_1, \cdots, S_l$, and vectors $Z_1=(z_1,1), \cdots, Z_l=(z_l,1) \in \R^n$ such that 
 for any   $ \tau (0,  \tau_0 ) $ and any  $M > M_0$, the transformations
		\[  \T_{j, M,\tau}x=S_j\tau x+  ({M}-S_j)\tau x_n Z_j \text{ for } x=(x',x_n) \in \R^n,\quad j=1,\cdots,l,\]
	 satisfy that
		\[\sum_{j=1}^l \left|{f}\left( \T_{j, M,\tau}x\right)\right| \geq a\tau^{b}\left|{f}\left(x \right)\right| , \; \forall \; x  \in \R^n\; \text{and}\;  |x| \leq \frac{c_0}{{M}\tau } .  \]
	\end{Definition}

	\begin{Lemma}\label{lem:polynomial Lemma 1}
		Suppose $|\P(x)|$ is  a polynomial on $\R^{n}$. Then, $|\P(x)|$  satisfies the EDT property.
	\end{Lemma}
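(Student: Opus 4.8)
The plan is to reduce everything to one coordinate system, take $b=\deg\P$, and exploit the fact that the restriction of $\P$ to a line is a one‑variable polynomial of controlled degree, which is determined — quantitatively — by its values at $\deg\P+1$ fixed points (Vandermonde). First, since an affine change of coordinates carries $\P$ to another polynomial of the same degree, it suffices to exhibit the constants in the standard coordinates for an arbitrary $\P\not\equiv 0$; set $d:=\deg\P$ and $b:=d$, and fix once and for all $d+1$ distinct positive scales $S_1,\dots,S_{d+1}$, so that there is $c_V=c_V(d,S_\bullet)>0$ with $\sum_{i=1}^{d+1}|p(S_i)|\ge c_V\max_{0\le k\le d}|p_k|$ for every one‑variable polynomial $p(s)=\sum_k p_k s^k$ of degree $\le d$. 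For a vector $Z=(z,1)$ the map $s\mapsto s\tau x+(M-s)\tau x_nZ=M\tau x_nZ+s\tau(x-x_nZ)$ is affine in $s$, hence $s\mapsto\P\!\left(s\tau x+(M-s)\tau x_nZ\right)$ is a polynomial of degree $\le d$ whose $s^k$–coefficient equals $\tau^kA_k(x;Z)$, where $A_k(x;Z):=\tfrac1{k!}D^k\P(M\tau x_nZ)[(x-x_nZ)^{\otimes k}]$ and $\sum_kA_k(x;Z)\lambda^k=\P(M\tau x_nZ+\lambda(x-x_nZ))$; moreover, when $|x|\le c_0/(M\tau)$, all of these points lie in a fixed ball $B_{Cc_0}$ independent of $M,\tau$.

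Next I would take $l=p(d+1)$ transformations, indexed by the pairs $(S_i,Z^{(r)})$ with $Z^{(r)}=(z^{(r)},1)$ running over $p$ suitably chosen directions (one being $z=0$, the rest generic). Since $\tau<\tau_0<1$ forces $\tau^k\ge\tau^d$ for $0\le k\le d$, the above Vandermonde bound applied to each direction reduces the Lemma to the single inequality
\[
\sum_{r=1}^{p}\ \max_{0\le k\le d}\ \tau^{\,k-d}\,\big|A_k(x;Z^{(r)})\big|\ \ge\ c_\ast\,|\P(x)|,\qquad |x|\le\frac{c_0}{M\tau},\ M>M_0,\ \tau<\tau_0 .
\]
A key elementary observation at this point is that $\tau^d|\P(x)|$ is in fact bounded by a fixed constant on the whole (growing) ball $\{|x|\le c_0/(M\tau)\}$, because $|\P(x)|\lesssim (c_0/(M\tau))^d$ and $M>M_0$; so the right–hand side above never blows up.

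I would prove the displayed inequality by a case split on the size of $|x_n|$ combined with induction on $d$ (base cases: $d=0$, trivial, and $n=1$, where the transformations collapse to $x\mapsto M\tau x$ and one simply chooses $c_0$ below the least nonzero real root of $\P$). Choosing the generic $Z^{(r)}$ so that $s\mapsto\P(sZ^{(r)})$ has degree exactly $d$, vanishes at $0$ to the minimal order $m_\ast=\operatorname{ord}_0\P$, and has no zero in $\{0<|s|\le c_0\}$ for $c_0$ small, one gets $|A_0(x;Z^{(r)})|=|\P(M\tau x_nZ^{(r)})|\ge c|M\tau x_n|^{m_\ast}$. (i) If $M\tau|x_n|$ is bounded below by a fixed $\theta_0$, the $k=0$ term alone exceeds $c_\ast|\P(x)|$ once $c_\ast$ is small (this is exactly where the fixed bound on $\tau^d|\P(x)|$ is used). (ii) If $M\tau|x_n|$ is small, write $\P=\bar\P(x')+x_nR(x',x_n)$ with $\bar\P=\P(\cdot,0)$ and $\deg R=d-1$: for the direction $z=0$ one has $s\tau x+(M-s)\tau x_ne_n=(s\tau x',M\tau x_n)$, so the Vandermonde estimate for $s\mapsto\P(s\tau x',M\tau x_n)$, after controlling the error produced by $M\tau x_n$ being small, yields a lower bound $\gtrsim\tau^d|\bar\P(x')|$ (essentially the $x_n=0$ analysis), while the $x_nR$–part is handled by the induction hypothesis for $R$: the transformations of the prescribed form scale the last coordinate as $x_n\mapsto M\tau x_n$ with $M>M_0$, so the extra factor $x_n$ contributes exactly one more power of $\tau$ and the exponent closes at $d$. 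Then $|\P(x)|\le|\bar\P(x')|+|x_n|\,|R(x',x_n)|$ lets one combine the two bounds.

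The hard part will be the uniformity in the last step: a single fixed finite family of scales and directions must work for every $x$ in $\{|x|\le c_0/(M\tau)\}$ — whose radius blows up as $\tau\to 0$ — and for all admissible $M,\tau$, including the regime where the top homogeneous part of $\P$ degenerates, so that "$\P\approx$ its leading form" fails badly. The Vandermonde control is precisely the robust substitute for that approximation, since it pins down all coefficients of a line–restriction simultaneously; so the real work lies in organising the (dyadic‑type) split in $|x_n|$ relative to powers of $M$ and $\tau$ so that in each regime one of three mechanisms carries the bound — the value $A_0(x;Z^{(r)})=\P(M\tau x_nZ^{(r)})$, the $x_n=0$ Vandermonde estimate for $\bar\P$, or the inductive estimate for $R$ — and in verifying that the bookkeeping of all the powers of $\tau$ and $M$ conspires to give the single exponent $b=d$.
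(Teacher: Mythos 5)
Your reduction and bookkeeping at the outer layer are fine and match the paper's exponent: taking $d+1$ scales $S_i$ on each fixed direction $Z^{(r)}$, the Vandermonde bound does reduce the Lemma (with $b=\deg\P=d$) to your displayed inequality for the line-restriction coefficients $A_k(x;Z^{(r)})$, and your easy regimes are handled correctly (the one-dimensional case, $x_n=0$, and $M\tau|x_n|\geq\theta_0$, where the a priori bound $\tau^{d}|\P(x)|\leq C$ on $\{|x|\leq c_0/(M\tau)\}$ suffices). But the step you explicitly defer is the entire content of the Lemma, and as sketched it does not close. Two concrete failures in the regime where $M\tau|x_n|$ is small: (a) for the direction $z=0$ one has $A_k(x;e_n)=\bar\P_k(x')+M\tau x_n R_k(x',M\tau x_n)$ (degree-$k$-in-$x'$ parts), so the Vandermonde estimate yields $\tau^{d}|\bar\P(x')|$ only up to an \emph{additive} error of order $M\tau|x_n|$ (after the $\tau$-weights tame $|x'|\leq c_0/(M\tau)$); this error is small in absolute terms but not relative to $\tau^{d}|\bar\P(x')|$, which may vanish, so the asserted bound "$\gtrsim\tau^{d}|\bar\P(x')|$" is not justified as stated; (b) the inductive appeal to the EDT property of $R$, where $\P=\bar\P(x')+x_nR$, cannot be transferred to the quantities you control: since $(\T_{j,M,\tau}x)_n=M\tau x_n$, one has $\P(\T_jx)=\bar\P((\T_jx)')+M\tau x_n R(\T_jx)$, and a lower bound on $\sum_j|R(\T_jx)|$ gives no lower bound on $\sum_j|\P(\T_jx)|$ or on the $A_k$'s of $\P$, because the two pieces can cancel — the triangle inequality points the wrong way. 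So the three mechanisms you list do not combine by themselves; the "uniformity" you flag as the hard part is exactly where a further idea is needed, and without it the argument is a plan, not a proof.

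The paper's proof avoids this cancellation problem by a genuinely different device, which is worth noting. It passes to the blown-down variables $y=x'/x_n$, $t=x_n$, writes $\P(ty,t)=\sum_k t^{k_P+k}\Q_k(y)$, and proves that the coefficient functionals $(S,z)\mapsto\mathcal{R}_{\alpha,k}(S,z)$ of $\Q_k(Sy+z)$, taken \emph{jointly} over all levels $k$ and all monomials $\alpha$ new at level $k$, are linearly independent. Hence finitely many evaluations at points $(S_i,z_i)$ dominate the full weighted $\ell^1$-sum of monomials $\D(y)[t]=\tfrac12\sum_k|t|^{k_P+k}\sum_{\beta}|y^{\beta}|$; a lower bound by a sum of absolute values of all monomials is immune to the cancellations in (a)–(b), the discarded non-new terms are absorbed because $|t|=M\tau|x_n|\leq c_0$ is small on the admissible ball, and the contraction $z_i+M^{-1}S_i(y-z_i)$ then gives the factor $\tau^{\deg\P}$ in one stroke, with no induction on the degree. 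In short: your Vandermonde step recovers the coefficients of $\P$ along each single line, but what is needed (and what the paper's joint linear-independence provides) is simultaneous quantitative control of all coefficients in $x'$ and $x_n$ from one fixed finite family of scales and base points; supplying that is the missing core of your argument.
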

	
	\begin{proof}
		In this proof, we use $\alpha, \beta$ denote multi-index, i.e., $n$-tuples of nonnegative integers. Fix an affine coordinate, let $k_P$ be the minimum order of ${\P}$ at origin, and write
		\[  {\P}(x)=\sum_{|\beta| = k_P }  {P}_{\beta}x^{\beta}+\sum_{|\beta| \geq  k_P+1 } {P}_{\beta}x^{\beta}.\]
		We write $x=(ty,t)$ for $y \in \R^{n-1}$ and $t \in \R$, and consider the function ${\Q}(y)[t]= {\P}(ty,t)={\P}(x)$.  Then it maybe rewritten as
		\begin{equation} \label{poly express  Q 1}
			{\Q}(y)[t]=\sum_{k\geq 0 }t^{k_P+k}\sum_{|\beta| =  k_P+k} {\Q}_{\beta}(y)= \sum_{k\geq 0 }t^{k_P+k} {\Q}_{k}(y).
		\end{equation}
		
		Consider the sets of multi-index:     $J_{-1} :=\emptyset$,
		\[J_0 := \left\{ \alpha :\;\text{there exists a } \; \beta \geq  \alpha   \text{  such that the coefficient of } y^{\beta}   \text{ in } {\Q}_{0}(y) \text{ is non-zero } \right\} \]
		and
		\[J_j := \left\{  \alpha :\; \text{there exists a }\; \beta \geq  \alpha   \text{ such that the coefficient of } y^{\beta}   \text{ in } {\Q}_{j}(y) \text{ is non-zero }  \right\} .\]
		
		 For a constant $S$,  let
		\[   {\Q}_{0}(Sy+z)=\sum_{\alpha \in J_0} {\mathcal{R}}_{\alpha}(S,z)y^{\alpha}, \; y\in \R^{n-1},  \]
		where ${\mathcal{R}}_{\alpha} (S,z)=S^{|\alpha|}\sum_{\beta \in J_0, \beta \geq \alpha  } E_{\alpha,\beta}z^{\beta -\alpha}$ for   constants $E_{\alpha,\beta} \neq 0$.
		
		We claim that the polynomials ${\mathcal{R}}_{\alpha} $ defined  on $ \left(S,z\right) \in \R^+ \times \R^{n-1}$ are linearly independent, i.e., the image of $ \{{\mathcal{R}}_{\alpha}(S,z)\}_{\alpha \in J_0}$ in $\R^{|J_0|}$ is not contained in any hyperplane through $0$.  Suppose that there are constants $\{C_{\alpha}\}_{\alpha \in J_0}$ such that
		\[\sum_{\alpha \in J_0} C_{\alpha}{\mathcal{R}}_{\alpha} = 0 \text{ on } \R^+ \times \R^{n-1}.\]
		Then,  $\sum_{\alpha \in J_0} C_{\alpha}{\mathcal{R}}_{\alpha}  \equiv 0$ and for any nonnegative inters $k$,
		\begin{equation} \label{poly express 1}
			\sum_{\alpha \in J_0, |\alpha|=k}\sum_{\beta \in J_0, \beta \geq \alpha   } C_{\alpha}  E_{\alpha,\beta} z^{\beta -\alpha} \equiv 0  .
		\end{equation}
		Recalling the lexicographic order of monomials, we first compare the exponents of $z_1$ in monomials, and in the case of equality, we compare the exponents of $z_2$,  and so on. The matrix $\{E_{\alpha,\beta}\}$ is then expressed as an upper triangular matrix of full rank.
		Thus,  $\{C_{\alpha}\}_{\alpha \in J_0}$  are zero, and the claim is proved.
		Hence,  one can find positive constants $  S_i $ and points $ z_i  \in   \R^{n-1}$ such that the linear combinations of finitely many vectors $\{{\mathcal{R}}_{\alpha}(S_i,z_i)\}_{\alpha \in J_0}$ will give all unit elements in $\R^{|J_0|}$.  Therefore,   we can find a finite set of $(a_i, S_i,z_i) \in \R \times \R^+ \times \R^{n-1}$ such that
		\[\sum_{i} \left |a_i{\Q}_{0}(S_iy+z_i) \right |  \geq \sum_{\alpha \in J_0} |y^{\alpha} |.\]
		
		 Generally, when $k \geq 1$,  we write
		\[   {\Q}_{k}(Sy+z)=\sum_{\alpha \in J_k} {\mathcal{R}}_{\alpha,k}(S,z)y^{\alpha},\]
		and consider the polynomial
		\[ \tilde{\Q} (Sy+z)[t] = \sum_{k\geq 0} t^{k_P+k} \sum_{\alpha \in J_k \setminus \cup_{m=-1}^{k-1}  J_{m}}  {\mathcal{R}}_{\alpha,k}(S,z)y^{\alpha}.\]
		Similarly, we have that   $\{{\mathcal{R}}_{\alpha,k}(S,z)\}_{k \geq 0,\ \alpha  \in J_k \setminus \cup_{m=-1}^{k-1}  J_{m}}$ are linearly independent.
		And there are  $(a_i, S_i,z_i)  \in \R \times \R^+ \times \R^{n-1}, 1\leq i \leq l$, such that
		\[ \sum_{i=1}^l \left| a_i\tilde{\Q} (S_iy+z_i)[t]\right| \geq \sum_{k\geq 0}| t|^{k_P+k} \left(\sum_{\alpha \in J_k \setminus \cup_{m=-1}^{k-1}  J_{m}}| y^{\alpha} | \right).\]
		We can also assume that $|a_i|=1$ for simplicity. If $|t|$ is small,  then we have
		\[ \sum_{i=1}^l  \left| \tilde{\Q} (S_iy+z_i)[t]\right| \geq {\D}(y)[t]:=\frac{1}{2}\sum_{ k \geq 0}|t|^{k_P+k}\sum_{\beta  \in J_k  }  |y^{\beta}|,\]
		and
		\[  \begin{split}
			\sum_{i=1}^l \left| {\Q}(S_iy+z_i)[t]\right|
			&\geq {\D}(y)[t]+ \sum_{i=1}^l \left| {\Q}(S_iy+z_i)[t]-  \tilde{\Q} (S_iy+z_i)[t]\right|\\
			& \geq {\D}(y)[t] -C_{\Q} \sum_{ k \geq 1}|t|^{k_P+k}\sum_{\beta \in  \cup_{m=-1}^{k-1}  J_{m} }  |y^{\beta}| \\
			&\geq {\D}(y)[t] -C_{\Q}|t| {\D}(y)[t]\\
			& \geq \frac{1}{2}{\D}(y)[t].
		\end{split}
		\]
		Note that
		\[|(y+z)^{\beta} -y ^{\beta} |  = |\sum_{ \alpha<\beta} c_{ \alpha,\beta} z^{\beta-\alpha}y^{\alpha}|\leq C_2|z| (1+ |y^{\beta}|) \text{ for } |z| \leq C_1. \]
		When  $\kappa \geq 0$ is small, we have
		\[  \begin{split}
			\sum_{i=1}^l |{\Q} \left(z_i+\kappa S_i (y- z_i)\right)[t] |
			&\geq \sum_{i=1}^l |{\Q} (z_i+ \kappa S_i y)[t]|-C_{\Q} \kappa  {\D}(\kappa y)[t] \\
			& \geq \frac{1}{2} {\D}(\kappa y)[t] -C_{\Q} \kappa  {\D}(\kappa y)[t]   \\
			&\geq  \frac{1}{4} {\D}(\kappa y)[t].
		\end{split}
		\]
		Substituting  $x=(x',x_n)= x_n(y,1)$, $\kappa =M^{-1}$,  $t=M\tau x_n$ and $Z_i=(z_i,1), i=1,\cdots,l$,  we obtain
		\begingroup
		\allowdisplaybreaks  \begin{align*}
			\sum_{i=1}^l \left|{\P}\left( S_i\tau x+  ({M}-S_i)\tau x_n Z_i \right)\right|
			&= \sum_{i=1}^l \left|{\P}\left(S_i\tau x_n \left(y ,1\right)+  ({M}-S_i)\tau x_n \left(z_i,1\right) \right)\right| \\
			& =\sum_{i=1}^l \left|{\P}\left({M}\tau x_n\left(z_i+{M}^{-1} S_i (y- z_i) ,  1\right) \right)\right| \\
			& =\sum_{i=1}^l \left|{\Q}\left(z_i+{M}^{-1} S_i (y- z_i) \right)[{M}\tau x_n]\right| \\
			&\geq  \frac{1}{4} {\D}(M^{-1} y)[{M}\tau  x_n] \\
			& = \frac{1}{8} \sum_{ k \geq 0}|{M}\tau x_n|^{k_P+k}\sum_{\beta  \in J_k  }  |({M}^{-1} y)^{\beta}| \\
			&\geq  c_P \tau^{\deg {\P}}\left|{\P}\left(x \right)\right|
		\end{align*}
		\endgroup
		provided that $M$ is large and ${M}\tau |x_n|$ is small.
	\end{proof}
	
It follows directly from Definition \ref{def:EDT  condition} and Lemma \ref{lem:polynomial Lemma 1} that
	\begin{Corollary}\label{lem:polynomial Lemma 2}
		Property \eqref{eq:polynomial measure 4} implies the EDT property of  $f$.
	\end{Corollary}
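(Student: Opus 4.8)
The plan is to deduce the EDT property of $f$ from that of the individual building blocks $|\P_i(x)|^{\alpha_i}$, exploiting crucially that all of these functions are \emph{non-negative}, so that no cancellation occurs when one forms sums. I would fix, as Definition \ref{def:EDT condition} requires, an arbitrary affine coordinate system. By Lemma \ref{lem:polynomial Lemma 1}, each polynomial $|\P_i(x)|$ satisfies EDT: there are data $l_i,\ a_i>0,\ b_i\ge0,\ c_0^{(i)},\ \tau_0^{(i)},\ M_0^{(i)},\ S^{(i)}_1,\dots,Z^{(i)}_{l_i}$ so that the associated transformations $\T^{(i)}_{j,M,\tau}$ satisfy $\sum_{j=1}^{l_i}|\P_i(\T^{(i)}_{j,M,\tau}x)|\ge a_i\tau^{b_i}|\P_i(x)|$ on $|x|\le c_0^{(i)}/(M\tau)$ for $\tau<\tau_0^{(i)}$, $M>M_0^{(i)}$.

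\textbf{Step 1 (passing to powers).} Since the form of $\T_{j,M,\tau}$ does not involve the exponent, I would reuse the very same transformations and invoke the elementary bounds $\sum_j t_j^{\alpha}\ge l^{1-\alpha}\bigl(\sum_j t_j\bigr)^{\alpha}$ for $\alpha\ge1$ (power-mean/Jensen) and $\sum_j t_j^{\alpha}\ge\bigl(\sum_j t_j\bigr)^{\alpha}$ for $0\le\alpha\le1$ (subadditivity of $s\mapsto s^{\alpha}$), valid for $t_j\ge0$; the case $\alpha=0$ is trivial since both sides are then constant. Applying this with $t_j=|\P_i(\T^{(i)}_{j,M,\tau}x)|$ yields $\sum_{j=1}^{l_i}|\P_i(\T^{(i)}_{j,M,\tau}x)|^{\alpha_i}\ge a_i'\tau^{b_i\alpha_i}|\P_i(x)|^{\alpha_i}$ for some $a_i'>0$, so each $|\P_i|^{\alpha_i}$ satisfies EDT with the same transformations and exponent $b_i\alpha_i$.

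\textbf{Step 2 (finite sums).} Set $g:=\sum_{i=1}^m|\P_i(x)|^{\alpha_i}$ and take the concatenation of all the transformations $\{\T^{(i)}_{j,M,\tau}:1\le i\le m,\ 1\le j\le l_i\}$, a list of length $l:=\sum_i l_i$ whose entries are still of the prescribed form, together with $c_0:=\min_i c_0^{(i)}$, $\tau_0:=\min_i\tau_0^{(i)}$, $M_0:=\max_i M_0^{(i)}$, $b:=\max_i b_i\alpha_i$, $a:=\min_i a_i'$. Because every summand of $g$ is non-negative, $g(\T^{(i)}_{j,M,\tau}x)\ge|\P_i(\T^{(i)}_{j,M,\tau}x)|^{\alpha_i}$, hence on $|x|\le c_0/(M\tau)$, using $\tau<1$,
\[\sum_{i,j}|g(\T^{(i)}_{j,M,\tau}x)|\;\ge\;\sum_{i=1}^m\sum_{j=1}^{l_i}|\P_i(\T^{(i)}_{j,M,\tau}x)|^{\alpha_i}\;\ge\;\sum_{i=1}^m a_i'\tau^{b_i\alpha_i}|\P_i(x)|^{\alpha_i}\;\ge\;a\tau^{b}g(x),\]
so $g$ satisfies EDT.

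\textbf{Step 3 (two-sided comparison).} By \eqref{eq:polynomial measure 4} we have $\lambda g\le f\le\Lambda g$; keeping the transformations of $g$, I would then estimate $\sum_k|f(\T_{k,M,\tau}x)|=\sum_k f(\T_{k,M,\tau}x)\ge\lambda\sum_k g(\T_{k,M,\tau}x)\ge\lambda a\tau^b g(x)\ge(\lambda a/\Lambda)\tau^b f(x)$, which is precisely the EDT inequality for $f$ with constant $\lambda a/\Lambda>0$; this finishes the proof. The only mildly delicate points are the bookkeeping of the finitely many constants and the case split $\alpha\ge1$ versus $\alpha<1$ in Step 1; the genuine potential obstacle for sums — cancellation among the values $f(\T_{k,M,\tau}x)$ — simply does not arise, precisely because the $|\P_i|^{\alpha_i}$ are non-negative.
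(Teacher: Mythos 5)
Your proof is correct and is essentially the paper's intended argument: the paper simply asserts that the corollary follows directly from the definition of the EDT property and Lemma \ref{lem:polynomial Lemma 1}, relying on the (stated but unproved) closure of EDT under finite sums of non-negative functions and comparison with bounded positive factors. Your Steps 1--3 are the natural filling-in of those closure properties, including the power-mean/subadditivity step needed to pass from $|\P_i|$ to $|\P_i|^{\alpha_i}$, which the paper leaves implicit; only trivial bookkeeping (e.g.\ also taking $\tau_0\le 1$) is needed.
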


	\begin{proof}[Proof of Theorem \ref{thm:smp dege 1}]
		By Lemma \ref{def:condition mu} and Corollary \ref{lem:polynomial Lemma 2}, we find that $\mu =fdx$ satisfies  the doubling condition and $f$ satisfies the EDT property.  Theorem \ref{thm:smp dege 1} follows directly from Theorem \ref{thm:smp dege 35} below.
	\end{proof}
	
	\begin{Theorem}\label{thm:smp dege 35}
		Suppose that $u  \geq v $, both $u$ and $v$ are generalized solutions to \eqref{monge ampere measure}.   Assume that the assumptions \eqref{eq:polynomial measure 1} hold. If $f$ satisfies the EDT property and $\mu =fdx$ satisfies  the doubling condition, then on each connected component $\Omega_i$ of $\Omega \setminus (\Sigma_u \cap \Sigma_v)$, we have either $u\equiv v$ or $u >v$.
	\end{Theorem}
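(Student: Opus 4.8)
The plan is to run the Hopf-type argument from the proofs of Theorems~\ref{thm:smp 1} and~\ref{thm:smp 3}, replacing the single dilation–shear $\T x=\tfrac12x+Mx_ne_n$ (which only works when $f$ is bounded below) by the finite family $\T_{j,M,\tau}$, $j=1,\dots,l$, furnished by the EDT property of $f$. Since $f\,dx$ is doubling and $\lambda\le F,G\le\Lambda$, both $\M u$ and $\M v$ satisfy the doubling condition and $\M u\le C\,\M v$, so Lemmas~\ref{lem:measure constrain double equiv}, \ref{lem:Pogorelov Line}, \ref{lem:Pogorelov Line 1} and the engulfing Lemma~\ref{def:engulfing condition} all apply to $u$ and $v$. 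Fix a component $\Omega_i$ of $\Omega\setminus(\Sigma_u\cap\Sigma_v)$ and assume, for contradiction, that $\Omega_0:=\{u=v\}$ meets $\Omega_i$ in a nonempty proper subset; then, $\Omega_i$ being connected and $\Omega_0$ closed, there is a point $X\in\partial\Omega_0\cap\Omega_i$. Since $X\notin\Sigma_u\cap\Sigma_v$ we may assume $X\notin\Sigma_v$, and then Lemma~\ref{lem:measure constrain double equiv} forces $X\notin\Sigma_u$ (recall $X\in\Omega_0$), so $u$ and $v$ are $C^1$ and strictly convex on a neighbourhood $N\subset\Omega_i$ of $X$. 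Running the section–sliding argument from the proof of Theorem~\ref{thm:smp 3} with $\Omega$ replaced by $N$ produces $Y\in N\setminus\Omega_0$ and a height $\bar t$ for which $S^v_{\bar t}(Y)\subset\subset N$ is an internal section whose boundary touches $\Omega_0$; after an affine change of coordinates we may assume the touching point is $0$, that $u(0)=v(0)=0$, $\nabla u(0)=\nabla v(0)=0$, $u,v\ge0$ near $0$, and $S^v_{\bar t}(Y)=\{v<c_1x_n\}$ for some $c_1>0$.

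\emph{The barrier.} Let the $\T_{j,M,\tau}$ and the constants $a,b,\tau_0$ be the EDT data of $f$ in these coordinates, fix $\tau\in(0,\tau_0)$, and set $\Phi:=\tfrac1l\sum_{j=1}^lv\circ\T_{j,M,\tau}$. Each $\T_{j,M,\tau}$ is linear with $(\det\T_{j,M,\tau})^2\ge c\,\tau^{2n}M^2$, so the change-of-variables rule for the Monge-Amp\`ere measure under linear maps together with $\lambda\le F,G\le\Lambda$ gives $\M(v\circ\T_{j,M,\tau})\ge c\,\tau^{2n}M^2f(\T_{j,M,\tau}x)\,dx$; combining this with the arithmetic–geometric inequality for Monge-Amp\`ere measures (as in the proof of Theorem~\ref{thm:smp 0}) and the EDT estimate $\sum_jf(\T_{j,M,\tau}x)\ge a\tau^bf(x)$, valid for $|x|\le c_0/(M\tau)$, yields
\[
\M\Phi\ \ge\ c\,\tau^{2n+b}M^2\,f(x)\,dx\ \ge\ c\,\tau^{2n+b}M^2\,\M u
\]
on that neighbourhood of $0$; all remaining parameters will be chosen so that the construction stays there. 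With $0<c_2<c_1$ small, set $U:=\{v<c_2x_n\}$, $\sigma(s):=\inf\{u(x)-v(x):\ x\in\partial U,\ x_n\ge s\}$, and, for $0<\delta<\sigma(s_0)$,
\[
w:=\frac{1}{1+\delta}\Big(v+\delta\Phi+\frac{c_2\delta}{4}x_n\Big).
\]
Using the $C^1$-regularity and the vanishing gradient of $v$ at $0$ together with the scale $\tau M$ of the maps $\T_{j,M,\tau}$, one checks, exactly as in Theorems~\ref{thm:smp 1} and~\ref{thm:smp 3}, that $u\ge w$ on $\partial U$ and $v\le u\le w\le u+C\delta$ on $V:=\{x\in U:\ u(x)<w(x)\}$.

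\emph{Closing the argument.} On $V$, Lemma~\ref{lem:legendre t 1} gives $\partial w(V)\subset\partial u(V)$ and $|w^*\circ\nabla w-u^*\circ\nabla u|\le C\delta$, while \eqref{eq:polynomial measure 1} gives $|\nabla w-\nabla v|+|F(x,u)-F(x,v)|\le C\delta$; then the arithmetic–geometric inequality and the bound on $\M\Phi$ yield, after fixing $M$ large (depending only on the universal constants and on $\tau$),
\[
G(\nabla w,w^*\circ\nabla w)\,\M w\ \ge\ \big(1+c\,\tau^{(2n+b)/n}M^{2/n}\delta\big)\,f(x)\,F(x,u)\,dx\quad\text{on }V .
\]
Integrating $G(p,u^*)\,dp$ over $\nabla w(V)\subset\nabla u(V)$ and using the change-of-variables identity exactly as in the proof of Theorem~\ref{thm:smp 1}, we obtain $\int_{\nabla u(V)}G(p,u^*)\,dp\le\big(1-c\,\tau^{(2n+b)/n}M^{2/n}\delta\big)\int_{\nabla u(V)}G(p,u^*)\,dp<\infty$, hence $\M u(V)=|\nabla u(V)|=0$. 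But every point of $V\subset N$ is a strict convexity point of $u$ and therefore carries an internal section $S^u_t(x_0)\subset\subset V$ with $\M u(S^u_t(x_0))\ge c_nt^n/|S^u_t(x_0)|>0$ by Lemma~\ref{lem:size section abp}; hence $V=\emptyset$, i.e.\ $u\ge w$ on $\overline U$. Since $u(0)=w(0)$, $0\in\partial U$ and $U\subset\{x_n>0\}$, differentiating $u-w\ge0$ along $e_n$ at $0$ gives $0\ge(\nabla u(0)-\nabla w(0))\cdot e_n=-\tfrac{c_2\delta}{4(1+\delta)}$, so $c_2\le0$, contradicting $c_2>0$. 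Therefore $\Omega_0\cap\Omega_i$ is either $\emptyset$ (that is, $u>v$ on $\Omega_i$) or all of $\Omega_i$ (that is, $u\equiv v$ on $\Omega_i$), which is the assertion.

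\emph{The main obstacle.} The step I expect to be hardest is the barrier construction. When $f$ is merely nonnegative a single dilation–shear can transport the Monge-Amp\`ere mass of $v$ into a region where $f$ is negligible, so that $\M(v\circ\T)$ is no longer comparable to $\M u$ and the Hopf mechanism collapses. The EDT property is precisely the device that repairs this: it records a finite family of shears whose combined densities $\sum_jf\circ\T_{j,M,\tau}$ still dominate $f$, at the acceptable cost of the factor $\tau^b$. Once that is in hand, the remaining difficulty is bookkeeping—choosing the parameters $M,\tau,c_2,s_0,\delta$ in the right order so that all the barrier estimates from the proofs of Theorems~\ref{thm:smp 1} and~\ref{thm:smp 3} survive the presence of the $l$ extra summands.
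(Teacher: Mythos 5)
Your proposal is correct and follows essentially the same route as the paper: reduce via Lemma \ref{lem:measure constrain double equiv} to a touching point where both functions are $C^1$ and strictly convex, build the barrier $\Phi$ from the EDT family of shears applied to $v$, form $w=\frac{v+\delta\Phi+\frac{c_2\delta}{4}x_n}{1+\delta}$, run the Legendre-transform integral comparison of Theorem \ref{thm:smp 1} to get $u\ge w$ in $U$, and conclude by the Hopf-type gradient contradiction at the touching point. Your only departure is a welcome extra detail: since $f$ may vanish, you explicitly pass from $\int_{\nabla u(V)}G\,dp=0$ to $V=\emptyset$ via strict convexity and Lemma \ref{lem:size section abp}, a step the paper leaves implicit when citing the nondegenerate argument.
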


	\begin{proof}
		Let $\Omega_0= \{x \in \Omega:\;  u=v\}$ denote the coincidence set. We only need to show that $\left(\partial \Omega_0 \cap \Omega \right)  \subset \Sigma_{u}\cap \Sigma_v  $. Assume on the contradiction that $ \left(\partial \Omega_0 \cap \Omega \right)  \not \subset \Sigma_v$.   As in the proof of Theorem \ref{thm:smp 0} in Section \ref{chp:3},  we can  assume that $0\in \Omega_0 \setminus \Sigma_v$, $ u(0)=v(0)=0$, $\nabla u(0)=\nabla v(0)=0$,  $u$ and $v $ are non-negative, and
		\[\{ x:\; v(x) < c_1x_n\}  \subset \Omega_0^c \cap \{ x_n > 0\} .\]
		Denote  $U=\{ x:\; v(x) < c_2x_n\}$ and let
		\begin{equation}\label{eq:sigma 1}
			\sigma(s ) :=\inf\left\{ 	u(x) -v(x)  :\; x\in \overline U,   x_n \geq  s \  \right\} >0 \text{ for } s>0,
		\end{equation}
		where $c_2 \in (0,c_1)$ is a small constant to be fixed later. We see that both $u$ and $v$ are $C^1$ and strictly convex around $0$,  $\partial U$ is strictly convex, and  $\lim_{s\to 0} \sigma(s)=0$.
		
		By the definition of the EDT property, there are positive integer $l$,   constant $b\geq 0 $, positive constants $a,c_0,\tau_0, M_0, S_1, \cdots, S_l$, and vectors $Z_1=(z_1,1), \cdots, Z_l=(z_l,1) $ in $\R^n$ such that
		\begin{equation}\label{eq:polynomial measure trans}
			\sum_{j=1}^l \left|{f}\left( \T_{j, M,\tau}x\right)\right| \geq a\tau^{b}\left|{f}\left(x \right)\right| \text{ for }  \T_{j, M,\tau}x=S_j\tau x+  ({M}-S_j)\tau x_n Z_j
		\end{equation}
		provided that $\tau\leq \tau_0 $ is small, $M > M_0$ is large and $|x| \leq \frac{c_0}{{M}\tau } $.
		
		Let
		\[\tau = \min\left\{ \tau_0, \frac{1}{1+4\sum_{j=1}  |  S_j |}\right\} \text{ and }M  \geq M_0+ \left[4\Lambda \sum_{j=1}^l\left(S_j+ S_j^{2-2n}\tau^{-2n}\right) + 1+ \frac{\tau^{-b}}{a}\right]^2.\]
		Let $c_2$ be small enough such that  $ U\subset\{ x:\; M\tau |x| \leq c_0  \}$. Consider the functions
		\[\Phi_j(x) =  v \left(  \T_{j, M,\tau} x\right) \text{ and } \Phi(x)= \sum_{j=1}^l	\Phi_j (x).\]
		 It follows from \eqref{eq:polynomial measure trans}   that
		\[\begin{split}
			\det  D^2 \Phi & \geq     \sum_{j=1}^l  \det D^2	\Phi_j (x)	 \\
			& \geq  \sum_{j=1}^l  (M-S_j)^2 S_j^{2n-2}\tau^{2n} f \left(  \T_{j, M,\tau} x\right) \\
			& \geq    M^{3/2}  \sum_{j=1}^l f \left(  \T_{j, M,\tau} x\right) \\
			&\geq   M^{3/2} a\tau^b   f( x) \\
			& \geq  M f(x) \text{ in } U.
		\end{split}\]
		
		By the convexity and $C^1$ regularity of $v $ at $0$, we have that
		\begingroup
		\allowdisplaybreaks
		\begin{align*}
			\Phi(x)= \sum_{j=1}^{l}	\Phi_j (x)
			& =  \sum_{j=1}^{l} v\left( S_j\tau x+  ({M}-S_j)\tau x_n Z_j \right) \\
			&\leq \sum_{j=1}^{l} \left[S_j\tau \cdot v(x)+(1-S_j\tau )\cdot v\left(\frac{({M}-S_j)\tau}{  (1-S_j\tau )} x_n Z_j\right)\right]\\
			& \leq \sum_{j=1}^{l}  \left[S_j\tau \cdot v(x) + o(|x_n|)\right]\\
			& \leq  \frac{1}{2} v(x) + o(|x_n|)  \text{ on } \partial U\cap \{    x_n \leq  s_0 \}
		\end{align*}
		\endgroup
		and then
		\[u(x)  \geq  \frac{1}{2}u(x)+\frac{c_2}{2} x_n   \geq   \Phi(x)+\frac{c_2}{4} x_n  \text{ on } \partial U\cap \{    x_n \leq  s_0 \}\]
		provided that $s_0=s_0(c_2) >0 $ is small enough.
		
		%

		Let $\delta \leq C\sigma(s_2)$ be a small positive constant, and consider the function
		\[ w := \frac{ v+ \delta \Phi+ \frac{c_2\delta}{4}x_n}{1+\delta} .\]
		If $c_2$ is small enough, the same discussion as the proof  of Theorem \ref{thm:smp 1} gives $ u \geq  w$ in $ U$.  Thus, $ \left(\nabla u(0) -\nabla w(0)\right) \cdot e_n \geq 0$, which is  a contradiction to
 the fact that $\nabla u(0)= 0$ and $\nabla w(0)= \frac{c_2\delta}{4} e_n$. The proof is completed.
	\end{proof}

		\section{The Set of Non-Strictly Convex Points}\label{chp:6}


			In	\cite[Mooney]{[Mo1]}, it is shown that if $u$ is convex and
		$ \det D^2u \geq \lambda > 0$, then $u$ is strictly convex away from a singular set $\Sigma$ and $\mathcal{H}^{n-1}(\Sigma )=0$.  It should be noted that the definition of strict convexity in \cite{[Mo1]} is usually weaker than the classical definition \eqref{eq:def:strict convex} of strict convexity. However, when $u \in C_{loc}^1$ or $ \M u$ satisfies the doubling condition, they are equivalent. Regardless of which definition is used, with some modifications to the proof details following the spirit of \cite{[Mo1]}, we still have $\mathcal{H}^{n-1}(\Sigma_u )=0$, and basing on this, we can make some generalizations to the degenerate case.

		\begin{Lemma}[{\cite[Section 3]{[Mo1]}}]\label{lem:section n}
			For each point ${x_0}$, if there are positive constants $r_{x_0}$ and $c_{x_0}$ such that $\det D^2 u \geq c_{x_0} >0 $ in $B_{r_{x_0}}({x_0}) \subset \Omega$.  Then
 			for  any subgradient   $ p \in \partial  {u}({x_0})$,
			\[\left|S_{t, p}^{u}({x_0}) \right| \leq  C_{{x_0}} t^{\frac{n}{2}} \text{ for  all  small } t >0   .\]
		\end{Lemma}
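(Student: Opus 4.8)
The plan is to normalize, show the section is eventually an internal section sitting inside the good ball, and then run a John-ellipsoid normalization together with the Aleksandrov maximum principle to extract the $t^{n/2}$ bound.

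\emph{Reduction.} Fix $p\in\partial u(x_0)$, subtract from $u$ the supporting affine function $x\mapsto u(x_0)+p\cdot(x-x_0)$, and translate so that $x_0=0$. This changes neither $\M u$ nor the shapes of the sections, and afterwards $u\ge0$, $u(0)=0$, $0\in\partial u(0)$, $\det D^2u\ge c_{x_0}$ on $B_{r_{x_0}}(0)$, and $S_{t,p}^u(x_0)=\{u<t\}=:S_t$. It remains to prove $|S_t|\le C_{x_0}t^{n/2}$ for all small $t>0$, with $C_{x_0}=C(n)c_{x_0}^{-1/2}$.

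\emph{Step 1: the section is internal and inside $B_{r_{x_0}}(0)$.} First I would show $u$ is strictly convex at $0$ in the classical sense; then $\bigcap_{t>0}\overline{S_t}=\{0\}$, so $\overline{S_t}\downarrow\{0\}$ and, for all small $t$, $S_t\subset\subset B_{r_{x_0}}(0)$ is an internal section on which $\det D^2u\ge c_{x_0}$. (Classical strict convexity at $0$ also forces every section $S_{t,p}^u(0)$, for every $p\in\partial u(0)$, to be internal for small $t$, so this handles all subgradients at once.) To prove strict convexity at $0$: if it failed, $u$ would be affine on a nontrivial segment $\sigma$ issuing from $0$; shrinking $\sigma$ and subtracting the affine function that agrees with $u$ on $\sigma$, we may assume $u\equiv0$ on $\sigma\subset B_{r_{x_0}}(0)$, with $\M u\ge c_{x_0}\,dx$ still. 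Let $T_{\eps}$ be the tube of radius $\eps$ about the middle third of $\sigma$. On one hand $\M u(T_{\eps})\ge c_{x_0}|T_{\eps}|\ge c\,\eps^{n-1}$. On the other hand, anchoring any subgradient of $u$ at a point of $T_{\eps}$ against two nearby points of $\sigma$ straddling its foot on $\sigma$, and using that $u$ is locally Lipschitz near $\sigma$, one checks $\partial u(T_{\eps})$ lies in an $O(\eps)$-thin slab about the hyperplane $\{q:\ q\cdot e_\sigma=0\}$ ($e_\sigma$ the direction of $\sigma$), so $\M u(T_{\eps})=|\partial u(T_{\eps})|\le C\eps$. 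For $n\ge3$ these contradict one another as $\eps\to0$; for $n=2$, strict convexity is immediate from \cite{[C3]}.

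\emph{Step 2: the normalized estimate.} By John's lemma choose an affine map $T$, with linear part $T_0$, sending the John ellipsoid of $S_t$ to the unit ball, so that $B_1(0)\subset TS_t\subset B_{C(n)}(z_0)$ and $|S_t|\le C(n)\,|\det T_0|^{-1}$. Put $w:=u\circ T^{-1}$ on $TS_t$; it is convex, $w(T0)=0$, $w=t$ on $\partial(TS_t)$, hence $0\le w\le t$ there, and $\M w(E)=|\det T_0|^{-1}\M u(T^{-1}E)\ge c_{x_0}\,|\det T_0|^{-2}\,|E|$ for every Borel $E\subset TS_t$ (since $T^{-1}E\subset S_t\subset B_{r_{x_0}}(0)$). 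On $B_{1/2}(0)\subset B_1(0)\subset TS_t$, convexity and $\mathrm{osc}_{B_1(0)}w\le t$ give the Lipschitz bound $\partial w(B_{1/2}(0))\subset\overline{B_{2t}(0)}$, so $\M w(B_{1/2}(0))=|\partial w(B_{1/2}(0))|\le\omega_n(2t)^n$. Comparing with $\M w(B_{1/2}(0))\ge c_{x_0}\,|\det T_0|^{-2}\,\omega_n2^{-n}$ gives $|\det T_0|^{-2}\le C(n)\,c_{x_0}^{-1}\,t^{n}$, whence $|S_t|\le C(n)\,|\det T_0|^{-1}\le C(n)\,c_{x_0}^{-1/2}\,t^{n/2}$, as claimed.

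The John-normalization and the elementary Lipschitz/volume estimates of Step 2 are routine. The genuinely delicate point is Step 1 — knowing that the hypothesis rules out non-strict convexity at the base point (so that the section is eventually an internal section inside the good ball); the thin-tube argument is the crux there, and some care is needed because there is no upper bound on $\det D^2u$ and because $u$ need not be differentiable at $x_0$.
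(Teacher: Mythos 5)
Your Step 2 is a correct and standard normalization argument, but Step 1 contains a genuine gap, and in fact its conclusion is false, so the overall strategy cannot work as written. From the hypothesis $\det D^2u\ge c_{x_0}>0$ only in a small ball $B_{r_{x_0}}(x_0)$ (or even in all of $\Omega$) one cannot deduce strict convexity of $u$ at $x_0$ when $n\ge3$: Pogorelov's example $u(x',x_n)=|x'|^{2-2/n}(1+x_n^2)$ has Monge--Amp\`ere measure bounded between positive constants near the segment $\{x'=0\}$ and yet is affine on that segment, so every point of the segment violates your claim. This is not a peripheral issue: in the paper the lemma is applied (in the proof of Lemma \ref{lem:section sigma tilde}) precisely at points of $\tilde{\Sigma}_u$, i.e.\ at non-strictly convex base points, so the statement must be proved without assuming the sections $S_{t,p}^u(x_0)$ shrink to $x_0$ or are ever contained in $B_{r_{x_0}}(x_0)$. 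Your thin-tube argument also does not produce the contradiction you claim: the two bounds you derive are $\M u(T_{\eps})\ge c\,\eps^{n-1}$ and $\M u(T_{\eps})\le C\eps$, and for $n\ge3$ the inequality $c\,\eps^{n-1}\le C\eps$ holds for all small $\eps$ (the exponents point the wrong way), so nothing is contradicted; for $n=2$ there is likewise no contradiction, and strict convexity there comes from \cite{[C3]}, not from this computation.

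The fix, which is how the paper argues, is to avoid confining the section to the good ball altogether. Apply the measure--oscillation estimate of Lemma \ref{lem:size section 3} (valid for an arbitrary convex subset $E$ of a domain where $\M u\ge\mu$ with $\mu$ doubling; here $\mu=c_{x_0}\,dx$) to the convex set $E=S_{t,p}^u(x_0)\cap B_{r_{x_0}}(x_0)$: since $0\le u-\ell<t$ on $E$ for the supporting affine function $\ell$, this gives $c_{x_0}|E|^2\le Ct^n$, i.e.\ $|E|\le C_{x_0}t^{n/2}$, with no internality or strict convexity needed. Then use that $S_{t,p}^u(x_0)$ is convex, contains $x_0$, and lies in the bounded set $\Omega$, so that the dilation of $E$ about $x_0$ by the factor $\operatorname{diam}(\Omega)/r_{x_0}$ covers the whole section, yielding $\left|S_{t,p}^u(x_0)\right|\le\left[\operatorname{diam}(\Omega)/r_{x_0}\right]^n|E|\le C_{x_0}t^{n/2}$. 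Your John-ellipsoid computation in Step 2 is essentially a proof of the special case of Lemma \ref{lem:size section 3} for sections compactly contained in the ball, but without the intersection-and-dilation step it cannot reach the general statement that the paper actually needs.
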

		\begin{proof}
			By Lemma \ref{lem:size section 3},  we have for any $t >0$ and  any subgradient  $ p \in \partial  {u}({x_0})$,
			\[
			c_x\left|S_{t, p}^{u}({x_0}) \cap B_{r_{x_0}}({x_0})\right|^2 \leq C t^n    .
			\]
			Observing  that $S_{t, p}^{u}({x_0}) $ is convex and $x_0 \in S_{t, p}^{u}({x_0})$, we have
			\[ \left|S_{t, p}^{u}({x_0}) \right|  \leq \left[\frac{ \operatorname{diam}(\Omega)}{r_{x_0}}\right]^n \cdot \left|S_{t, p}^{u}({x_0}) \cap B_{r_{x_0}}({x_0})\right| \leq C_{{x_0}} t^{\frac{n}{2}}.  \]
		\end{proof}
		
		\begin{Definition}
			For a convex function $u$ in $\Omega$,  it follows from Caffarelli [5] that $\Sigma_{u}$ is the union of segments $L $ on which ${u}$ is linear.  Let $\tilde{\Sigma}_{u}$ denote the union of open segments ${L}$ on which ${u}$ is linear.
		\end{Definition}
		
		\begin{Lemma}[{\cite[Section 3]{[Mo1]}}]\label{lem:section n n+1}
			Assume that  ${x_0} \in \tilde{\Sigma}_{u}$, $ p \in \partial  {u}({x_0})$, and
			\[ \left|S_{t, p}^{u}({x_0}) \right|   \leq C_{{x_0},p} t^{\frac{n}{2}} \text{ for any small } t.  \]
			Then,
			\[ \left|S_{t, p}^{w}({x_0}) \right|   \leq \tilde{C}_{{x_0},p} t^{\frac{n+1}{2}} \text{ for any small } t,  \]
			where $w={u}+|x|^2$.
		\end{Lemma}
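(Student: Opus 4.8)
The plan is to reduce the assertion to an elementary fact about the shape of the section $S_{t,p}^u(x_0)$: membership of $x_0$ in $\tilde\Sigma_u$ forces this section to contain a fixed segment (independent of $t$), and then the volume bound $|S_{t,p}^u(x_0)|\le C_{x_0,p}t^{n/2}$ forces it to be correspondingly thin in the directions transverse to that segment; the quadratic term $|x|^2$ in $w$ truncates the long direction at scale $\sqrt t$, which produces the extra factor $t^{1/2}$. First I would normalize: translating the base point to the origin and subtracting the supporting affine function $u(x_0)+p\cdot(x-x_0)$ changes $w$ only by an affine function, hence alters neither the sizes of sections nor the relevant subgradient; so we may assume $x_0=0$, $p=0$, $u(0)=0$, $u\ge 0$ near $0$, $0\in\partial u(0)\cap\partial w(0)$, and $w=u+|x|^2$. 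Since $0\in\tilde\Sigma_u$, there are a fixed $\ell>0$ and a unit vector $v$ with $[-\ell v,\ell v]$ contained in an open segment on which $u$ is linear; as this linear restriction is nonnegative and vanishes at the interior point $0$, it vanishes identically, so $\pm\ell v\in\{u=0\}\subset S_t^u(0)$ for every $t>0$. Finally $S_t^w(0)=\{x:u(x)+|x|^2<t\}\subset S_t^u(0)\cap B_{\sqrt t}(0)$.

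Next I would establish the geometric estimate. Put $K=S_t^u(0)$, let $\pi$ be the orthogonal projection onto $v^{\perp}$, and set $K'=\pi(K)\subset v^{\perp}$. Given $y'\in K'$, choose a point $q\in K$ with $\pi(q)=y'$; the midpoints $\tfrac12(q+\ell v)$ and $\tfrac12(q-\ell v)$ both lie in $K$ by convexity, they project to $\tfrac12 y'$, and they differ in the direction $v$ by exactly $\ell$, so the $v$-fibre of $K$ over $\tfrac12 y'$ has length at least $\ell$. Thus the $v$-fibre of $K$ over every point of $\tfrac12 K'$ has length $\ge\ell$, and Fubini gives $|K'|\le 2^{n-1}\ell^{-1}|K|$. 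On the other hand every $v$-fibre of $K\cap B_{\sqrt t}(0)$ has length at most $2\sqrt t$, so $|K\cap B_{\sqrt t}(0)|\le 2\sqrt t\,|K'|$.

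Combining these with the hypothesis yields
\[
|S_t^w(0)|\le |K\cap B_{\sqrt t}(0)|\le 2\sqrt t\,|K'|\le 2^{n}\ell^{-1}\sqrt t\,|K|\le 2^{n}\ell^{-1}C_{x_0,p}\,t^{(n+1)/2},
\]
which is the claim with $\tilde C_{x_0,p}=2^{n}\ell^{-1}C_{x_0,p}$. I do not expect a genuine obstacle here; the argument is essentially the Fubini observation above, as in Mooney. The only point that requires care is the normalization step, where one must check that expanding $|x-x_0|^2$ about the new origin together with subtracting the supporting hyperplane function perturbs $w$ only by affine terms — so that the section geometry is unchanged and the subgradient may be taken to be $0$ — and that a linear function which is nonnegative and vanishes at an interior point of a segment must vanish on the whole segment; this last fact is precisely what makes the long direction of $S_t^u(0)$ available uniformly in $t$.
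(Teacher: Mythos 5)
Your argument is correct and takes essentially the same route as the paper's proof: the segment of linearity through $x_0$ lies in $\{u=\ell\}$ and hence in every section $S_{t,p}^{u}(x_0)$, a convexity/Fubini estimate converts the hypothesis $|S_{t,p}^{u}(x_0)|\le C t^{n/2}$ into a bound on the projection onto the orthogonal complement of the segment direction, and the inclusion $S_{t,p}^{w}(x_0)\subset S_{t,p}^{u}(x_0)\cap B_{\sqrt t}$ supplies the extra factor $t^{1/2}$. Your explicit midpoint computation merely quantifies the paper's ``by convexity'' projection bound $|P_L^{\bot}S|\le C|S|/\operatorname{dist}(x_0,\partial L)$, and your normalization to $x_0=0$, $p=0$ matches (indeed treats more carefully) the paper's reduction.
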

		\begin{proof}
			We may assume that ${x_0}=0 $ and $ {u}(0)=0$. Since $0 \in \tilde{\Sigma}_{u}$,  there exists a line segment $L \subset \Omega$ and a subgradient $p_0\in \partial {u}(0)$ such that
			\[0 \in \mathring{L}  \subset \left\{x :\;   {u}(x)= p_0 \cdot x \right\} .\]

			Now, for any fixed subgradient $ p \in \partial {u}(0) $, we still have $L \subset S_{t, p }^{u}(0)$. Let $P_L^{\bot} S_{t, p }^{u}(0)$ denote the projection of $S_{t, p }^{u}(0)$ onto the $(n-1)$-plane which is perpendicular to    $L$. By convexity, we have
			\[ |P_L^{\bot} S_{t, p }^{u}(0)|  \leq C\frac{\left|S_{t, p}^{u}({0}) \right|}{\operatorname{dist}(0,\partial L)}\leq \frac{C_{{x_0},p}}{\operatorname{dist}(0,\partial L)}  t^{\frac{n}{2}}. \]
			Note that since $w={u}+|x|^{2}$, again by the convexity of $u$ we have
			\[S_{t, p  }^{w}({0})\subset S_{t, p  }^{|x|^{2}}({0}) \subset t^{\frac{1}{2}} B_1(0).\]
			Therefore,
			\[|S_{t, p  }^{w}({x_0}) | \leq  \frac{C_{{x_0},p}}{\operatorname{dist}(0,\partial L)} t^{\frac{n}{2}} \cdot t^{\frac{1}{2}} \leq  \tilde{C}_{{x_0},p}t^{\frac{n+1}{2}} .\]
		\end{proof}
		
		\begin{Lemma}[\cite{[Mo1]}, Lemma 3.2]\label{lem:mooneys Lemma 3.2} Assume that $w={u}+|x|^{2}$ for a convex function ${u}$. Fix a point $x_0$, if for all subgradient  $ p \in \partial  w(x_0) $,
			\begin{equation}\label{eq:mo singular pointwise}
				\left|S_{t, p }^{w}(x_0)\right| \leq C_{x,p}t^{\frac{n+1}{2}}
			\end{equation}
			holds for all small $t>0$, then, for any fixed  subgradient  $ p \in \partial  w(x_0) $, letting
			\[
			d_1(t) \geq d_2(t) \geq \cdots \geq d_n(t)
			\]
			denote the axis lengths of the John ellipsoid of the section $S_{t, p}^u(x_0)$, we have
			\begin{equation}\label{eq:mo singular pointwise 1}
				\frac{d_{n-1}(t)}{t^{1 / 2}} \rightarrow 0   \text { as } t \rightarrow 0
			\end{equation}
		\end{Lemma}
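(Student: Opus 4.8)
The plan is to argue by contradiction through an affine normalization of the sections at the heights where the conclusion fails, in the spirit of Caffarelli's normalization technique and of Mooney's argument. First I would reduce to a normalized configuration. Having fixed $p\in\partial w(x_0)$, translate $x_0$ to the origin and subtract from $w$ its supporting affine function $x\mapsto w(x_0)+p\cdot(x-x_0)$; since $w=u+|x|^2$ with $u$ convex, one checks that after these reductions $0\in\partial w(0)=\partial u(0)$, $u\ge0$, $u(0)=0$, $w(0)=0$, and hence $w(x)\ge|x|^2$ near $0$. Write $S_t:=\{w<t\}$ (the sections throughout being those of $w$). I record the elementary facts to be used: $w\ge|x|^2$ forces $S_t\subset B_{\sqrt t}$, so every John axis satisfies $d_i(t)\le2\sqrt t$; John's Lemma gives $|S_t|\approx\prod_{i=1}^n d_i(t)$; convexity with $w(0)=0$ gives $\tfrac12 S_{2t}\subset S_t\subset S_{2t}$, so the $d_i$ at heights $t$ and $2t$ are comparable; and $D^2w=D^2u+2I\ge2I$, so $\det D^2w\ge 2^n$. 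In this notation hypothesis \eqref{eq:mo singular pointwise} reads $|S_t|\le C_0 t^{(n+1)/2}$ for $t<t_0$. The bound $d_n(t)/t^{1/2}\to0$ is then immediate, since $d_n(t)^n\le\prod_i d_i(t)\le C|S_t|\le C C_0 t^{(n+1)/2}$. Suppose, towards \eqref{eq:mo singular pointwise 1}, that $d_{n-1}(t)/t^{1/2}\not\to0$, and pick $\varepsilon\in(0,1)$ and $t_k\downarrow0$ with $d_{n-1}(t_k)\ge\varepsilon\sqrt{t_k}$. Then $d_1(t_k),\dots,d_{n-1}(t_k)\in[\varepsilon\sqrt{t_k},2\sqrt{t_k}]$, while the hypothesis forces $d_n(t_k)\le C|S_{t_k}|/\prod_{i<n}d_i(t_k)\le C\varepsilon^{-(n-1)}t_k$; in particular $d_n(t_k)/d_{n-1}(t_k)\to0$.

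Next I would blow up at the bad heights. Let $R_k$ be a rotation sending the John-ellipsoid axes of $S_{t_k}$ onto the coordinate axes, put $D_k=\operatorname{diag}(d_1(t_k),\dots,d_n(t_k))$ and $A_k=R_kD_k$, so that $A_k^{-1}(S_{t_k})$ is normalized, and define $w_k(y):=t_k^{-1}w(A_ky)$ on $\Omega_k:=A_k^{-1}(S_{t_k})$, where $B_{c_n}\subset\Omega_k\subset B_1$. Each $w_k$ is convex, $w_k(0)=0$, $0\le w_k<1$ on $\Omega_k$ and $\Omega_k=\{w_k<1\}$; moreover the inequality $w\ge|x|^2$ becomes the crucial coercivity estimate
\[w_k(y)\ge t_k^{-1}|A_ky|^2=t_k^{-1}\sum_{i=1}^n d_i(t_k)^2 y_i^2\ge\varepsilon^2|y'|^2,\qquad y'=(y_1,\dots,y_{n-1}),\ y\in\Omega_k.\]
Passing to a subsequence, $\Omega_k\to\Omega_\infty$ in the Hausdorff sense with $\Omega_\infty$ a normalized convex body, and, the $w_k$ being convex and uniformly bounded hence locally equi-Lipschitz, $w_k\to w_\infty$ locally uniformly in $\mathring{\Omega}_\infty$, where $w_\infty$ is convex, $w_\infty(0)=0$, $0\le w_\infty\le1$ and $w_\infty(y)\ge\varepsilon^2|y'|^2$. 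In particular $0$ lies on no non-degenerate segment of $\overline{\Omega}_\infty$ transverse to $e_n$ along which $w_\infty$ is affine.

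The remaining — and principal — point is to contradict this coercivity. One first observes that $\partial w(0)$ must be non-degenerate: if $\partial w(0)=\{0\}$, then $u$ is differentiable at $0$ with $\nabla u(0)=0$, so $w$ is comparable to $|x|^2$ to leading order, $S_t$ is essentially round at scale $\sqrt t$, and $|S_t|\approx t^{n/2}$, contradicting the hypothesis. Then I would exploit the hypothesis \emph{at subgradients $q$ near $p$}: because $d_n(t_k)/d_{n-1}(t_k)\to0$, the maps $A_k$ stretch the short-axis direction infinitely more than the others, so the variable $y_n$ decouples in the limit; tilting the supporting plane of $w$ at $0$ within $\partial w(0)$ and running the same rescaling at the corresponding heights should produce, in the blow-up limit, a non-degenerate segment through $0$ transverse to $e_n$ on which $w_\infty$ is affine — contradicting $w_\infty(y)\ge\varepsilon^2|y'|^2$ and thus establishing \eqref{eq:mo singular pointwise 1}. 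I expect this last step to be the real obstacle: since $\det D^2w=\det(D^2u+2I)$ carries no upper bound (it blows up wherever $D^2u$ is large), the rescaled functions $w_k$ need not have uniformly bounded Monge-Amp\`ere mass, so the choice of the tilted slopes $q_k\to p$ and the passage to the limit must be carried out purely through the Aleksandrov--Bakelman--Pucci estimate (Lemma \ref{lem:size section abp}), its reverse under the doubling and $(\mu_\infty)$ conditions (Lemmas \ref{lem:size section 2} and \ref{lem:size section 3}), the engulfing property (Lemma \ref{def:engulfing condition}) and the explicit form of the normalizing maps $A_k$, keeping careful control that the rescaled sections stay normalized and that the limiting section is genuinely $(n-1)$-dimensional.
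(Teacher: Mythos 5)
Your preparatory steps are essentially fine (the normalization $w-\ell_p\ge|x-x_0|^2$, hence $S^w_{t,p}\subset B_{\sqrt t}(x_0)$ and $d_i(t)\le 2\sqrt t$; the deduction $d_n(t_k)\le C\epsilon^{-(n-1)}t_k$ from the volume bound; the blow-up compactness), but note that none of them uses the hypothesis at any subgradient other than the fixed $p$, and the step where that hypothesis must enter is precisely the one you leave open and yourself flag as ``the real obstacle''. That step is not a technicality to be supplied later: it is the entire content of the lemma, since with \eqref{eq:mo singular pointwise} assumed only at the fixed $p$ the conclusion is false. Indeed, take $u(x)=|x_n|$, $w=|x_n|+|x|^2$, $x_0=0$, $p=0$: then $S^w_{t,0}(0)$ is comparable to $\{|x'|\le \sqrt t,\ |x_n|\le t\}$, so $|S^w_{t,0}(0)|\approx t^{\frac{n+1}{2}}$ while $d_{n-1}(t)\approx \sqrt t$; the lemma survives only because \eqref{eq:mo singular pointwise} fails at the tilted subgradients $\pm e_n\in\partial w(0)$, where $S^w_{t,\pm e_n}(0)$ contains half of $B_{\sqrt t}$ and has volume $\approx t^{n/2}$. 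So any proof must extract, from the degenerate geometry along $t_k$ (thickness $\lesssim t_k$ in the collapsing direction $\nu_k$, hence linear growth of $w-\ell_p$ along $\pm\nu_k$ beyond scale $t_k$), a genuinely different subgradient $q\in\partial w(x_0)$ and then show quantitatively that the volume bound \eqref{eq:mo singular pointwise} is violated for the sections $S^w_{t,q}(x_0)$; this is how the cited argument of Mooney proceeds, and the present paper does not reprove the lemma but simply quotes it from [Mo1]. Your sketch gestures at this (``tilting the supporting plane within $\partial w(0)$'') but constructs no $q$, chooses no heights, and proves no lower bound on the tilted sections; moreover the contradiction you aim for in the blow-up --- an affine segment of $w_\infty$ through $0$ \emph{transverse} to $e_n$ --- is not something that tilting in the collapsing direction $e_n$ can be expected to produce, and no argument for it is offered.

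Two further points. The one concrete auxiliary claim you do make is not justified: from $\partial w(x_0)=\{p\}$ you infer that ``$w$ is comparable to $|x|^2$ to leading order'' and $|S_t|\approx t^{n/2}$, but differentiability of a convex function at a point gives only the one-sided estimate $w-\ell_p=o(|x-x_0|)$; it yields no quadratic upper bound, no roundness of sections, and by itself only $|S_t|\ge c_\delta t^{n}$, which is perfectly compatible with \eqref{eq:mo singular pointwise}. So even the preliminary non-degeneracy of $\partial w(x_0)$ is not established by your argument (nor is it actually needed if the contradiction argument is run correctly, since that argument itself produces the tilted subgradients). Finally, a minor slip: in your normalization $\Omega_k=A_k^{-1}(S_{t_k})$ you must also recenter at the John ellipsoid's center, since $x_0$ need not be comparable to the center of $S_{t_k}$, so $B_{c_n}\subset\Omega_k$ as written is unjustified (harmless, but symptomatic). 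In summary, what you have is a correct reduction and a statement of intent; the decisive step, where ``for all $p\in\partial w(x_0)$'' is used, is missing, so the proof has a genuine gap at its core.
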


		In the assumption of Lemma \ref{lem:mooneys Lemma 3.3} below, comparing with the original statement,  we only need \eqref{eq:mo singular pointwise} and \eqref{eq:mo singular pointwise 1} to be satisfied for a certain subgradient $p$ instead of for all subgradients, which is easy to check from the original proof.
		\begin{Lemma}[\cite{[Mo1]}, Lemma 3.3]\label{lem:mooneys Lemma 3.3}
			Assume that $w=u+|x|^{2}$ for a convex function $u$. Suppose there exists a subgradient  $ p \in \partial  u(x_0) $ satisfying \eqref{eq:mo singular pointwise} and \eqref{eq:mo singular pointwise 1}. Then for any $\epsilon>0$, there is a sequence $r_{k} \rightarrow 0$ such that
			\begin{equation}\label{eq:mo singular pointwise 2}
				\M w\left(B_{r_{k}}(x_0)\right)>\frac{1}{\epsilon} r_{k}^{n-1} .
			\end{equation}
		\end{Lemma}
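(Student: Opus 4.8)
The plan is to argue by contradiction. If the conclusion fails, then $\limsup_{r\to0}\M w(B_r(x_0))/r^{n-1}<\infty$, so there are constants $C_0,r_0>0$ with $\M w(B_r(x_0))\le C_0 r^{n-1}$ for all $0<r<r_0$. After translating $x_0$ to the origin and subtracting from $u$ the supporting affine function associated with the subgradient $p$ of the hypothesis, I may assume $x_0=0$, $u\ge0$, $u(0)=0$, $0\in\partial u(0)$, and the relevant subgradient is $0$; then $w=u+|x|^2\ge|x|^2$, so the section $S^w_t(0)$ of $w$ at $0$ with height $t$ for the subgradient $0$ satisfies $S^w_t(0)\subset\overline{B_{\sqrt t}(0)}$, and hence $\M w(S^w_t(0))\le\M w(B_{\sqrt t}(0))\le C_0 t^{(n-1)/2}$ for small $t$.

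The next step is to read off shape information. By the Aleksandrov--Bakelman--Pucci inequality (Lemma~\ref{lem:size section abp}), $|S^w_t(0)|\ge c_n t^n/\M w(S^w_t(0))\ge c\,t^{(n+1)/2}$, and together with hypothesis~\eqref{eq:mo singular pointwise} this pins $|S^w_t(0)|\approx t^{(n+1)/2}$. Since $S^w_t(0)\subset\overline{B_{\sqrt t}}$, John's lemma forces all axes of the John ellipsoid of $S^w_t(0)$ to be $\le2\sqrt t$, so the volume lower bound forces the longest axis to be $\approx\sqrt t$ (this is the axis along the segment $L$ through $0$ on which $u$ is affine and $w$ agrees with $|x|^2$). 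On the other hand $S^w_t(0)\subset S^u_t(0)$, and hypothesis~\eqref{eq:mo singular pointwise 1} says that the $(n-1)$-st John axis of $S^u_t(0)$ equals $\mu(t)\sqrt t$ with $\mu(t)\to0$; thus $S^u_t(0)$, and therefore $S^w_t(0)$, lies in a codimension-two slab of width $\le C\mu(t)\sqrt t$, which combined with the ball containment gives $|S^w_t(0)|\le C\mu(t)^2 t^{n/2}$.

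If $\mu(t_k)=o(t_k^{1/4})$ along some sequence $t_k\to0$, we are finished at once: the last two bounds give $|S^w_{t_k}(0)|=o(t_k^{(n+1)/2})$, hence by ABP $\M w(B_{\sqrt{t_k}}(0))\ge c_n t_k^n/|S^w_{t_k}(0)|\gg t_k^{(n-1)/2}$, contradicting $\M w(B_r(x_0))\le C_0 r^{n-1}$ with $r_k=\sqrt{t_k}$. The substantive case is the complementary one; in fact the contradiction hypothesis together with ABP forces $|S^w_t(0)|\ge c\,t^{(n+1)/2}/C_0$, which combined with $|S^w_t(0)|\le C\mu(t)^2 t^{n/2}$ gives precisely $\mu(t)\gtrsim t^{1/4}$ for all small $t$, so no single-scale estimate can close the argument.

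For this regime I would run a dyadic iteration in the spirit of Mooney's proof: renormalize $S^w_t(0)$ by the affine map taking its John ellipsoid to the unit ball, obtaining a rescaled solution $\tilde w_t$ on a section $\approx B_1$ with $\M\tilde w_t(\{\tilde w_t<1\})\approx1$; by ABP the half-section $\{\tilde w_t<1/2\}$ has volume $\gtrsim C_0^{-1}$, hence is comparable to a ball of radius $\sim1$, and pulling this back forces $S^w_{t/2}(0)$ to be comparable, with fixed constants, to the John ellipsoid of $S^w_t(0)$. Iterating down the scales $t=2^{-k}t_0$ and comparing the accumulated axis ratios against the ball-containment bound and against the lower bound $d_{n-1}(2^{-k}t_0)\gtrsim(2^{-k}t_0)^{3/4}$ from the previous paragraph is meant to yield the required divergence of $\M w(B_{r_k}(x_0))/r_k^{n-1}$. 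This balancing of the accumulated ratios is the delicate step, and the place where I expect the main difficulty; it is also where both hypotheses enter essentially --- \eqref{eq:mo singular pointwise} to pin the section volume and \eqref{eq:mo singular pointwise 1} to control its shape --- and where the remark preceding the lemma (that a single subgradient $p$ suffices) is convenient, since the whole construction proceeds along one chain of renormalizations of the $p$-sections. Everything else is routine bookkeeping with John's lemma and ABP.
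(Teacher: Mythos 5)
First, note that the paper itself does not supply a proof of this lemma: it is quoted directly from Mooney \cite{[Mo1]} (Lemma 3.3), with only the remark that the original argument goes through when \eqref{eq:mo singular pointwise} and \eqref{eq:mo singular pointwise 1} hold for a single subgradient. So your attempt has to stand on its own, and as written it does not. Your set-up is correct and matches the easy half of the argument: after normalizing, $S^w_t(x_0)\subset \overline{B_{\sqrt t}}(x_0)$, ABP gives $\M w(B_{\sqrt t}(x_0))\ge c\,t^n/|S^w_t(x_0)|$, and the slab bound from \eqref{eq:mo singular pointwise 1} gives $|S^w_t(x_0)|\le C\mu(t)^2t^{n/2}$; you also correctly observe that this closes only when $\mu(t_k)=o(t_k^{1/4})$ along a sequence, and that the contradiction hypothesis $\M w(B_r(x_0))\le C_0r^{n-1}$ together with ABP forces $|S^w_t(x_0)|\approx t^{(n+1)/2}$ and $\mu(t)\gtrsim t^{1/4}$, i.e.\ $d_{n-1}(t)d_n(t)\approx t^{3/2}$. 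But this is exactly where the entire content of the lemma lies: the hypotheses by themselves are compatible, at the level of axis bookkeeping, with $d_{n-1}(t)\sim\sqrt t/\log(1/t)$ and $d_n(t)\sim t\log(1/t)$ at every scale, so some finer mechanism linking the scales is indispensable, and your proposal leaves precisely that step as a hope (``is meant to yield the required divergence'', ``the delicate step'').

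Moreover, the dyadic iteration you sketch would not close as described. From $\M\tilde w_t(\{\tilde w_t<1\})\approx 1$ and ABP you get $|\{\tilde w_t<1/2\}|\gtrsim C_0^{-1}$, hence $S^w_{t/2}(x_0)$ is comparable to $S^w_t(x_0)$ only up to constants depending on $C_0$ and strictly less than $1$ per step; iterating such per-step constants geometrically over $t=2^{-k}t_0$ is perfectly consistent both with $d_{n-1}(t)/t^{1/2}\to0$ and with the lower bound $d_{n-1}(t)\gtrsim t^{3/4}$, so no contradiction is reached, and nothing in the sketch isolates where the shrinking between consecutive sections must occur. Two smaller points: the assertion that the volume lower bound ``forces the longest axis to be $\approx\sqrt t$'' does not follow from \eqref{eq:mo singular pointwise} and the ball containment (they only give $d_1(t)\gtrsim t$, or $\gtrsim t/\mu(t)^2$ using the slab); it is true in the application because $x_0$ lies on a segment of linearity of $u$, but that is not a hypothesis of the lemma as stated, so you should either import it explicitly or avoid it. In short, the reduction is right and the diagnosis of why a single-scale estimate fails is exactly correct, but the crux --- producing the unbounded factor $1/\epsilon$ with no rate on $\mu(t)$ --- is missing, and to complete it one has to reproduce Mooney's actual argument from \cite{[Mo1]} rather than the comparability iteration proposed here.
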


		Now we review Definition \ref{def:ZL sets} for $\L_{\eta}$.
		\begin{Lemma} \label{lem:section sigma tilde}
			Suppose that $u $ is convex and $\det D^2 u \geq \eta $ in $\Omega$, where $\eta \geq 0$ is continuous. Let $w=u+|x|^2$. Then for any $x_0 \in  \Sigma_u \setminus \L_{\eta}$, there exists a subgradient $p \in \partial w(x_0)$ such that    \eqref{eq:mo singular pointwise} and \eqref{eq:mo singular pointwise 1} holds for all small $t>0$. Furthermore, if $\M u $ additionally satisfies the doubling condition, then we can replace the assumption $x_0 \in  \Sigma_u \setminus \L_{\eta}$ with $x_0 \in  \Sigma_u \setminus \tilde{\L}_{\eta}$.
		\end{Lemma}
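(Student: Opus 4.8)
The plan is to verify, at the point $x_0$, the hypotheses of Mooney's Lemma \ref{lem:mooneys Lemma 3.2} for $w=u+|x|^2$: once a subgradient $p\in\partial w(x_0)$ is produced with the volume decay \eqref{eq:mo singular pointwise}, the thin-ellipsoid estimate \eqref{eq:mo singular pointwise 1} comes for free from that lemma, so everything reduces to establishing \eqref{eq:mo singular pointwise}. To that end I would first use the structure of $\Sigma_u$: since $x_0\in\Sigma_u$, Caffarelli's theorem (the Definition recalled above, or Lemma \ref{lem:Pogorelov Line}) gives a maximal convex set $K\ni x_0$ with $\dim K\ge1$ on which $u$ agrees with a supporting affine function $\ell$; set $p_0=\nabla\ell\in\partial u(x_0)$. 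Because $x_0\notin\L_\eta$, $K$ is not contained in $\mathcal Z_\eta$ (otherwise a non-degenerate segment of $K$ through $x_0$ would put $x_0$ in $\L_\eta$), so $\{\eta>0\}$ meets $K$; as $\eta$ is continuous and $\mathring K$ is dense in $K$, I can choose $y\in\mathring K$ with $\eta(y)>0$, and by continuity $\det D^2u\ge\eta\ge c_y>0$ on a small ball about $y$.

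Next I would transplant the section estimate from $y$ to $x_0$. Since $u\equiv\ell$ on $K$ and $x_0,y\in K$, for any $p$ with $p|_{\mathrm{aff}\,K}=\nabla\ell$ the affine functions $x\mapsto u(x_0)+p\cdot(x-x_0)$ and $x\mapsto u(y)+p\cdot(x-y)$ coincide, so $S^u_{t,p}(x_0)=S^u_{t,p}(y)$ as sets; this applies to $p=p_0$ (and, when $x_0\in\mathring K$, to every $p\in\partial u(x_0)$, since then each such $p$ restricts to $\nabla\ell$ on $\mathrm{aff}\,K$). Lemma \ref{lem:section n} at $y$ then gives $|S^u_{t,p}(x_0)|=|S^u_{t,p}(y)|\le Ct^{n/2}$ for small $t$. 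Moreover $S^u_{t,p_0}(x_0)$ contains a segment of fixed length through $x_0$ along the direction of $[x_0,y]$ — the part of $K$ running through $x_0$ and a little past $y$ — because $u\equiv\ell$ there. For the subgradient $p:=p_0+2x_0\in\partial w(x_0)$ one computes $S^w_{t,p}(x_0)=\{x:u(x)+|x-x_0|^2<\ell(x)+t\}\subset S^u_{t,p_0}(x_0)\cap B_{\sqrt t}(x_0)$. Since $S^u_{t,p_0}(x_0)$ is convex, has volume $\le Ct^{n/2}$, and contains a fixed-length segment through $x_0$, its projection onto the hyperplane orthogonal to that segment has $(n-1)$-measure $\le Ct^{n/2}$ (the projection step in the proof of Lemma \ref{lem:section n n+1}); intersecting with $B_{\sqrt t}(x_0)$ confines the remaining direction to length $\le2\sqrt t$, whence $|S^w_{t,p}(x_0)|\le Ct^{(n+1)/2}$, i.e.\ \eqref{eq:mo singular pointwise}, and Lemma \ref{lem:mooneys Lemma 3.2} delivers \eqref{eq:mo singular pointwise 1}.

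For the doubling variant, Caffarelli's sharper structure theorem forces all extremal points of $K$ onto $\partial\Omega$, so $K$ is exactly of the type defining $\tilde\L_\eta$; hence the weaker hypothesis $x_0\in\Sigma_u\setminus\tilde\L_\eta$ already yields $K\not\subset\mathcal Z_\eta$ and the argument above runs verbatim. The step I expect to be the main obstacle is handling base points $x_0$ on the relative boundary of $K$ rather than in its relative interior: there $\partial w(x_0)$ may be large, so \eqref{eq:mo singular pointwise} is only readily available for the pinned-down subgradient $p_0+2x_0$, and one must be careful that Lemma \ref{lem:mooneys Lemma 3.2} is applied with a subgradient for which it is valid — which is why the lemma only asserts the existence of one good $p$, and one should check (following the spirit of the remark preceding Lemma \ref{lem:mooneys Lemma 3.3}) that a single-subgradient version suffices. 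The resolution is that, once $\L_\eta$ (resp.\ $\tilde\L_\eta$) has been removed, $x_0$ still lies on a non-degenerate affine segment of $u$ along which the supporting plane is pinned to $\ell$, so the section estimate can always be imported from the non-degeneracy point $y$; a minor technicality is ensuring the fixed-length segment obtained inside $S^u_{t,p_0}(x_0)$ stays within the domain, which can be arranged by working inside an internal section of $u$.
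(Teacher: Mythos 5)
Your construction of the good subgradient and the volume bound \eqref{eq:mo singular pointwise} is essentially the paper's: pick the contact set through $x_0$, use $x_0\notin\L_{\eta}$ (or, with doubling, $x_0\notin\tilde\L_{\eta}$ via Lemma \ref{lem:Pogorelov Line}) to find a relative-interior point of the segment where $\eta>0$, import the $t^{n/2}$ bound of Lemma \ref{lem:section n} along the common supporting plane, and gain the extra $t^{1/2}$ from the segment as in Lemma \ref{lem:section n n+1}. The genuine gap is in the very first reduction: you claim that once \eqref{eq:mo singular pointwise} holds for \emph{one} subgradient $p\in\partial w(x_0)$, the thin-ellipsoid property \eqref{eq:mo singular pointwise 1} ``comes for free'' from Lemma \ref{lem:mooneys Lemma 3.2}. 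That lemma requires the volume bound for \emph{every} subgradient in $\partial w(x_0)$, and at the delicate points --- $x_0$ an endpoint of the contact segment, where $\partial u(x_0)$ can be large and $\eta(x_0)$ may vanish --- the bound is only available for the pinned slope transported from $y$; for the other supporting planes the section at $x_0$ is unrelated to any section at $y$ and no nondegeneracy is at hand. You flag exactly this as ``the main obstacle,'' but your proposed resolution only re-derives \eqref{eq:mo singular pointwise} for the single subgradient; it neither verifies the all-subgradient hypothesis at $x_0$ nor proves a single-subgradient version of Lemma \ref{lem:mooneys Lemma 3.2} (the paper weakens only Lemma \ref{lem:mooneys Lemma 3.3} to one subgradient, deliberately not Lemma \ref{lem:mooneys Lemma 3.2}). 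So \eqref{eq:mo singular pointwise 1} at $x_0$ is not established.

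The paper closes this gap differently: it applies Lemma \ref{lem:mooneys Lemma 3.2} not at $x_0$ but at the interior point $X\in\mathring L\setminus\mathcal Z_{\eta}$, where the all-subgradient hypothesis \emph{can} be checked --- since $X$ lies in the relative interior of the segment, every supporting plane of $u$ at $X$ contains $L$, so Lemmas \ref{lem:section n} and \ref{lem:section n n+1} give the $t^{(n+1)/2}$ bound for every $p\in\partial w(X)$ --- and thus obtains \eqref{eq:mo singular pointwise 1} at $X$. It then transfers the thinness from $X$ to $x_0$ by a convexity (cone) argument: the $u$-sections at $X$ and $x_0$ with the pinned slope are the same set, near $X$ this set lies in a slab $T_E$ of width $o(t^{1/2})$ in two directions, and since the segment extends a distance $\kappa$ beyond $X$ on the side opposite to $x_0$ (i.e.\ $-\kappa e\in S^{u}_{t/2}(X)$), the cone from $-\kappa e$ over the thin part forces $S^{w}_{t/2}(x_0)\subset C\kappa^{-1}\epsilon t^{1/2}T_E\cap t^{1/2}B_1(x_0)$, which is \eqref{eq:mo singular pointwise 1} at $x_0$. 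Your argument is missing either this transfer of \eqref{eq:mo singular pointwise 1} (you transfer only the volume bound) or a proof that Mooney's Lemma \ref{lem:mooneys Lemma 3.2} holds under a one-subgradient hypothesis; as written the proposal is incomplete at that step, while the remaining steps (choice of $y$, the projection estimate, and the doubling variant via the structure of $\Sigma_u$) are sound and parallel to the paper.
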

		
		\begin{proof}
			Let $x_0 \in \Sigma_u\setminus \L_{\eta} $,  there is  a supporting hyperplane function $\ell(x)= p_{x_0}\cdot (x-x_0) +u(x_0)$ of $u$ at $x_0$ and a line segment $L \subset \Omega$   such that
			\[  L  \subset  \left\{x :\;   u(x)= \ell(x)\right\} \text{ and } x_0 \subsetneq L.\]
			By the definition of $x_0 $ and $\L_{\eta}$, we can assume that $L  \not \subset \mathcal{Z}_{\eta} $.
			Since $\mathcal{Z}_{\eta}  \cap L$ is a closed subset of $L$,   we can take a point $X \in  \mathring{L} \setminus \mathcal{Z}_{\eta}$. Then,
			\[X \in \tilde{\Sigma}_u \setminus \mathcal{Z}_{\eta}.\]
			For simplicity, we may assume that
			\[ X=0,\ p_{x_0}=0, \ \operatorname{dist} \left(0, \partial L \right) = \kappa >0,\]
			and write  $x_0= |x_0| e$ for a unit vector $e$. Applying Lemmas \ref{lem:section n} and \ref{lem:section n n+1} to the point $X=0$, we obtain that
			\[
			|S_{t, 0}^{w}(x_0)|=|S_{t, 0}^{u}(0)+\{x_0\}| =|S_{t, 0}^{u}(0)|\leq  C_{0,0}t^{\frac{n+1}{2}} .
			\]
			This gives \eqref{eq:mo singular pointwise} for $x_0$ and $0 \in \partial u(x_0)$.
			
			For each fixed $t>0$, consider the John ellipsoid of $S_{t, 0}^{w}(0)$, with unit directions $e_1(t) , e_2(t) , \cdots , e_n(t) $, and with
 the corresponding  axis lengths $d_1(t) \geq d_2(t) \geq \cdots \geq d_n(t)$.
			Since $0 \in    \tilde{\Sigma}_u \setminus \mathcal{Z}_{\eta}$, combining Lemmas  \ref{lem:section n}, \ref{lem:section n n+1} and \ref{lem:mooneys Lemma 3.2}, we have	
			\[\frac{d_{n-1}(t)}{t^{1 / 2}} \rightarrow 0  \text{ as } t \rightarrow 0.\]
			Note that
			\[ S_{t, 0}^{w}(0) \subset S_{t, 0}^{|x|^2}(0) \subset t^{\frac{1}{2}}B_{1}(0) .\]
			Therefore, for any small $\epsilon>0$,
			\[    S_{t, 0}^{w}(0) \subset  \left\{|x\cdot e_{n-1} | +|x\cdot e_{n} | \leq C d_{n-1} (t)\leq C \epsilon t^{\frac{1}{2}}  \right\} \cap t^{\frac{1}{2}}B_{1}(0)  \]
			provided that $t>0$ is small enough.
			
			Now suppose that $t>0$ is small. Use vectors $\tilde{e}_{n-1}(t), \tilde{e}_{n}(t)$ to denote the projections  of $ {e}_{n-1}(t), {e}_{n}(t)$ onto the hyperplane which is perpendicular to $e$. Let  $E$  be  the plane generated by  $\tilde{e}_{n-1}(t), \tilde{e}_{n}(t)  $, $\operatorname{P}_E$ be the projection map onto $E$, and
			\[T_E=\{x \in \R^n :\; |\operatorname{P}_E x  |  \leq C \epsilon t^{\frac{1}{2}}   \}. \]
			 Since
			\[L\cap t^{\frac{1}{2}}B_{1}(0) \subset S_{t, 0}^{w}(0) \subset t^{\frac{1}{2}}B_{1}(0),\]
			we have that
			\[   S_{t, 0}^{w}(0) \subset  2T_E \cap t^{\frac{1}{2}}B_{1}(0) \text{ if } t^{\frac{1}{2}} \leq \kappa. \]
			Then,
			\[  u (x) \geq w (x)  -|x|^2 \geq t-\frac{1}{4}t \geq \frac{1}{2}t  \text{ in }     \frac{1}{2}t^{\frac{1}{2}}B_{1}(0) \setminus 2T_E. \]
			Therefore,
			\[   S_{t/2, 0}^{u}(0)  \cap  \frac{1}{2} t^{\frac{1}{2}}B_{1}(0) \subset  2T_E.\]
			
			The  assumption $\operatorname{dist} \left(0, \partial L \right) = \kappa  >0$ implies that $-\kappa e\in S_{t/2, 0}^{u}(0) $.
			By convexity, $S_{t/2, 0}^{u}(0) \cap t^{\frac{1}{2}}B_{1}(0) $ is inside the convex cone generated from  $S_{t/2, 0}^{u}(0)  \cap  t^{\frac{1}{2}}B_{1}(0)$  with vertex at  $-\kappa x_0$, by  a 
calculation we obtain that
			%
			\[  S_{t/2, 0}^{w}(x_0) \subset   S_{t/2, 0}^{u}(x_0) \cap   t^{\frac{1}{2}}B_{1}(x_0) \subset  4C\kappa^{-1}  \epsilon t^{\frac{1}{2}} T_E\cap   t^{\frac{1}{2}}B_{1}(x_0).\]
			This gives \eqref{eq:mo singular pointwise 1}  for $x_0$ and $0 \in \partial u(x_0)$.
			
			If $\M u$ also satisfies the doubling condition. Recalling Lemma \ref{lem:Pogorelov Line}, for each point $x_0 \in \Sigma_u \setminus \tilde{\L}_{\eta}$,   we can still find a  segment $ L$ containing $x_0$ such that $L \not \subset  \mathcal{Z}_{\eta} $. The remaining proof is the same.
		\end{proof}

		\begin{proof}[Proof of Theorem \ref{thm:hausdorff n-1}]
			 The proof  of Theorem \ref{thm:hausdorff n-1}  now directly follows from \cite{[Mo1]}. For the completeness of the article, we restate it here.  For any $\epsilon>0$ and point $x\in  \Sigma_u \setminus \L_{\eta}$, we apply Lemmas \ref{lem:mooneys Lemma 3.3}  and \ref{lem:section sigma tilde}. There is a sequence $r_{k} \rightarrow 0$ such that
			\begin{equation}\label{eq:singular pointwise 4}
				\M w\left(B_{r_{k}}(x)\right)>\frac{1}{\epsilon} r_{k}^{n-1} .
			\end{equation}
			We now cover  $\Sigma_u \setminus \mathcal{Z}_{\eta} $ with balls $\bigcup_{x_j \in \Sigma_u \setminus \L_{\eta}} B_{r_j}(x_j)$ such that
			\[\M w\left(B_{r_{j}}(x_j)\right)>\frac{1}{\epsilon} r_{j}^{n-1} \text{ and } r_j < \epsilon .\]
			By the Vitali covering Lemma, there exists  a countable refinement family, disjoint balls $\{ B_{r_j}(x_j)\}_{j\in J}$ such that
			\[\bigcup_{j \in J} B_{5r_j}(x_j)  \supset \bigcup_{x_j \in \Sigma_u \setminus \L_{\eta}} B_{j} \supset  \Sigma_u \setminus   \L_{\eta} .\]
			  Note that
			\[\sum_{j \in J} \left|\operatorname{diam} B_{5r_j}(x_j)\right| ^{n-1}\leq C_n\sum_{j \in J}r_{j}^{n-1} \leq   C_n\epsilon\M w\left( \bigcup_{x_j \in J}  B_{r_{j}}(x_j)\right) \leq C_n \epsilon \M w(\Omega) .  \]
			One may assume that $ \M w(\Omega)  < \infty$ by exhausting $\Omega$ with subdomains. Letting $\epsilon \to 0^+ $,  we prove  tthat $ H^{n-1}(\Sigma_u \setminus \L_{\eta}) =0$.
			
			If $\M u$ satisfies the doubling condition, the same proof gives $  H^{n-1}(\Sigma_u \setminus \tilde{\L}_{\eta}) =0$.
		\end{proof}
		
		 The following is the $W^{2,1+\epsilon}$ regularity result inside internal sections.
		\begin{Theorem}[\cite{[Phi1]}, Theorem 2]\label{lem:psf Theorem 2}
			Suppose that $u$ is a convex solution to
			\[\det D^2 u =f \text{ in } S_{t}^u(x_0) \subset \subset \Omega\]
			with $f$ as in \eqref{eq:polynomial measure 4}. Then, $u \in W^{2,1+\epsilon}\left(S_{t/2}(x_0)\right)$ for some positive consatant $\epsilon$ depending only on $\lambda, \Lambda, n$, and the doubling constants of $fdx$.
		\end{Theorem}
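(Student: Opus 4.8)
The plan is to deduce this from De~Philippis's interior $W^{2,1+\epsilon}$ estimate \cite[Theorem 2]{[Phi1]}, for which the only thing to check is that the measure $\M u=f\,dx$ with $f$ as in \eqref{eq:polynomial measure 4} meets that theorem's hypotheses, namely that $f\,dx$ satisfies condition $(\mu_\infty)$ of Lemma~\ref{def:condition mu} (which, taking $V=c_n(E-x_E)$, immediately forces the doubling condition of Definition~\ref{def:doubling constant}, the quantity appearing in the statement). So the first and essentially only new step is the following stability observation. If measures $\mu_1,\dots,\mu_m$ satisfy $\mu_i(V)/\mu_i(E)\ge\gamma_i(|V|/|E|)^{\beta_i}$ for every convex $E$ with nonempty interior and every open $V\subset E$, then, writing $\beta:=\max_i\beta_i$, $\gamma:=\min_i\gamma_i$ and using $|V|\le|E|$,
\[
\frac{\sum_{i=1}^m\mu_i(V)}{\sum_{i=1}^m\mu_i(E)}\ \ge\ \min_{1\le i\le m}\frac{\mu_i(V)}{\mu_i(E)}\ \ge\ \min_{1\le i\le m}\gamma_i\Big(\frac{|V|}{|E|}\Big)^{\beta_i}\ \ge\ \gamma\Big(\frac{|V|}{|E|}\Big)^{\beta},
\]
so $(\mu_\infty)$ passes to finite sums, and it is obviously unaffected by multiplying the density by a function bounded between two positive constants. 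Since Lemma~\ref{def:condition mu} gives $(\mu_\infty)$ for each $|\P_i|^{\alpha_i}\,dx$, it holds for $\sum_{i=1}^m|\P_i|^{\alpha_i}\,dx$, hence for $f\,dx$ by \eqref{eq:polynomial measure 4}, with constants $\beta,\gamma$ depending only on $n,\lambda,\Lambda,\P_i,\alpha_i$. Feeding this into \cite[Theorem 2]{[Phi1]} yields $u\in W^{2,1+\epsilon}\big(S_{t/2}^u(x_0)\big)$ with $\epsilon$ as asserted.

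For completeness I would recall the mechanism behind \cite[Theorem 2]{[Phi1]}, used here as a black box. One first applies John's Lemma to affinely normalize the section $S_t^u(x_0)$ so that it is comparable to a unit ball; the rescaled solution $\tilde u$ then solves a Monge--Amp\`ere equation whose right-hand side is, up to a bounded factor, the affine image of $f\,dx$, and since condition $(\mu_\infty)$ is \emph{affine invariant} — under a linear change of variables both $\mu(V)/\mu(E)$ and $|V|/|E|$ are preserved, and an overall positive constant factor drops out — the rescaled density again satisfies $(\mu_\infty)$ with the same constants $\beta,\gamma$. One then runs a Calder\'on--Zygmund-type iteration in which the sections of $\tilde u$, by the engulfing property (Lemma~\ref{def:engulfing condition}), behave like the balls of a space of homogeneous type, combined with Caffarelli's $C^{1,\alpha}$ and $W^{2,p}$ estimates for normalized solutions; this produces the distributional bound $|\{y:\ |D^2\tilde u(y)|>s\}|\le Cs^{-(1+\epsilon)}$ on the inner normalized section, whence $\tilde u\in W^{2,1+\epsilon}$ there. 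Undoing the affine normalization — which preserves membership in $W^{2,1+\epsilon}$, changing the norm only by powers of $\|T\|,\|T^{-1}\|$ and $t$ — gives the claim.

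The genuinely hard point is the distributional decay estimate in the previous paragraph, which is precisely the content of \cite{[PhiF],[Phi1]}; it is exactly there that the strengthened hypothesis $(\mu_\infty)$ — rather than the mere two-sided bound $0<\lambda\le f\le\Lambda$ of the non-degenerate case — is needed, since it is what controls the Lebesgue measure of the bad sub-sections arising at each stage of the iteration. If one wanted a self-contained proof this iteration would be the main obstacle; within the present paper, however, everything except the stability of $(\mu_\infty)$ under finite sums (and its affine invariance, used inside \cite{[Phi1]}) is imported verbatim, so the only additions required beyond the cited estimate are the two elementary facts established above.
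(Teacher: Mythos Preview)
The paper does not supply its own proof of this statement; it is quoted verbatim as a result from \cite{[Phi1]} and used as a black box in the proof of Theorem~\ref{thm:w21 estimate}. Your write-up correctly identifies and fills the one small gap the citation leaves implicit, namely that $f$ as in \eqref{eq:polynomial measure 4}---a finite sum of polynomial powers, not a single one as in Lemma~\ref{def:condition mu}---still satisfies condition $(\mu_\infty)$; your stability-under-sums argument via $\frac{\sum\mu_i(V)}{\sum\mu_i(E)}\ge\min_i\frac{\mu_i(V)}{\mu_i(E)}$ is valid, and the rest is exactly the intended appeal to \cite{[Phi1]}. (One cosmetic slip: when you deduce doubling from $(\mu_\infty)$ you should take $V=c_n(E-x_E)+x_E$, not $c_n(E-x_E)$, so that $V\subset E$.)
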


		\begin{proof}[Proof of Theorem \ref{thm:w21 estimate}]
			Theorem \ref{lem:psf Theorem 2} gives the local $W^{2,1+\epsilon}$ regularity of $u$ on $\Omega \setminus \Sigma_u$. Due to the doubling condition of $fdx$, $\Sigma_u$ is a closed set.  
This leads us to prove that $|D^2u|$ cannot concentrate on $\Sigma_u$.
 By Theorem \ref{thm:hausdorff n-1} and our assumptions, we have
			\[\mathcal{H}^{n-1}(\Sigma_u  )=0.\]
			Then for any $\epsilon>0$, we can cover $\Sigma_u $ with open balls $\left\{ B_{r_j}(x_j)\right\}$ such that $\sum_{j=1}^{\infty}r_j^{n-1} < \epsilon$.  Hence,
			\[  \begin{split}
				\sum_{j=1}^{\infty}\int_{B_{r_j}(x_j) } |D^2 u | dx
				\leq  C(n)\sum_{j=1}^{\infty}\int_{B_{r_j}(x_j) } \Delta u dx
				& =  C(n)\sum_{i=1}^{\infty} \int_{\partial B_{r_i}} D_{\nu}u d s \\
				&\leq C ||u||_{Lip} \cdot \sum_{i=1}^{\infty} r_i^{n-1}< C\epsilon ||u||_{Lip}.
			\end{split}\]
			Therefore, the second derivatives of $u$ cannot concentrate on $\Sigma_u$. The proof is completed.
		\end{proof}
		
			\appendix
		\renewcommand\theequation{\thesection\arabic{equation}}
		
		\section{  Examples}\label{chp:a}
		
		
	In the non-degenerate case, in Example \ref{exa:non-dege} and Remark \ref{rem:non-dege} below, we show there exist a family of convex functions with a common non-strictly convex point set $\Sigma$  such that all the coincidence sets are exactly $\Sigma$, and the Hausdorff dimension of $\Sigma$ is close to $n-1$. 
 For the degenerate case, in Example \ref{exa:dege}, we will construct a family of convex functions whose non-strictly convex point set $\Sigma$ is precisely the zero set of the Monge-Am\`ere measure, such that the coincidence set lies exactly on one side of their common non-strictly convex point set $\Sigma$.
	
		Caffarelli proves that the Pogorelov line cannot extend to the boundary point where $u$ is smooth enough, see \cite[Lemma 3.4]{[JX2]} for a proof.
		\begin{Lemma}[Caffarelli]\label{lem:strictly convexpogorelov}
			Suppose that $\Omega$ is convex, and $u\in C(\overline {\Omega})$ satisfies
			\[ \det D^2 u \geq  \lambda >0\text{ in } \Omega \text{ and } u=\varphi \text{ on } \partial \Omega.
			\]
		If both $\partial \Omega$ and $\varphi $ are pointwise $ C ^{1,\alpha}$ at $x_0 \in \partial \Omega$  for $\alpha > 1-\frac{2}{n}$, then $u$ is strictly convex at $x_0$ along any oblique direction.   Hence, if $u$ is linear on a segment $L \subset \Omega$, then $x_0 $ cannot be the endpoint of $L$.
		\end{Lemma}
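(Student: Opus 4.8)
The approach will be by contradiction. Suppose $u$ is affine on a non-degenerate segment $L$ issuing from $x_0$ in a direction $e$ that is \emph{oblique} to $\partial\Omega$ at $x_0$. First I would normalize: let $\ell_0$ be a supporting affine function of $u$ with $\ell_0=u$ on $L$; replacing $u$ by $u-\ell_0$ preserves $\det D^2u\ge\lambda$ and makes $u\ge0$, $u\equiv0$ on $L$, so with $x_0=0$ we get $0\in\partial u(0)$, and also $0\in\partial u(x_1)$ for any $x_1\in\mathring L$. Since $\partial\Omega$ is pointwise $C^{1,\alpha}$ at $0$, near $0$ it is a graph over its tangent plane $T$ with height $O(|x|^{1+\alpha})$, and $\varphi$ has an expansion $\varphi(x)=\varphi(0)+q\cdot x+O(|x|^{1+\alpha})$ along $\partial\Omega$. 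Because $u=\varphi-\ell_0\ge0$ on $\partial\Omega$ and vanishes at $0$, the linear part of $u|_{\partial\Omega}$ at $0$ can have no nontrivial component tangent to $T$, and its normal component is itself $O(|x|^{1+\alpha})$ on $\partial\Omega$; hence
\[0\le u(x)\le C|x|^{1+\alpha}\qquad\text{for }x\in\partial\Omega\cap B_{\rho_0}(0).\]

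Next I would estimate the volume of the section $S_t:=S^u_{t,0}(x_1)=\{x\in\Omega:u(x)<t\}$ in two ways. This set is convex but not an internal section (it touches $\partial\Omega$ near $0$); nonetheless the upper bound does not need internality: since $\M u\ge\lambda\,dx$ and $\lambda\,dx$ is doubling, Lemma \ref{lem:size section 3} (via Lemma \ref{lem:size section 2}) with the test plane $\ell\equiv0$ gives $\lambda|S_t|^2\le\M u(S_t)\,|S_t|\le C\|u\|_{L^\infty(S_t)}^{n}\le Ct^{n}$, that is, $|S_t|\le Ct^{n/2}$. For the lower bound, the boundary estimate shows $\partial\Omega\cap B_{r_t}(0)\subset\overline{S_t}$ with $r_t:=(t/2C)^{1/(1+\alpha)}$, and the $C^{1}$-flatness of $\partial\Omega$ at $0$ forces this slice to project onto an $(n-1)$-disk in $T$ of radius $\gtrsim r_t$; since $\overline L\subset\overline{S_t}$ and $L$ leaves $T$ transversally, the convex hull of $\overline L$ with that disk is a solid cone of height comparable to the length of $L$, so
\[|S_t|\ \ge\ c\,r_t^{\,n-1}\ =\ c\,t^{(n-1)/(1+\alpha)}.\]

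Combining the two estimates, $c\,t^{(n-1)/(1+\alpha)}\le|S_t|\le Ct^{n/2}$ for all small $t>0$. The hypothesis $\alpha>1-\tfrac2n$ is precisely the inequality $(n-1)/(1+\alpha)<n/2$, so dividing and letting $t\to0^{+}$ yields $c\,t^{(n-1)/(1+\alpha)-n/2}\le C$ with a negative exponent, which is impossible. Hence no oblique segment of linearity can emanate from $x_0$; this is the asserted strict convexity of $u$ at $x_0$ along oblique directions, and the last sentence of the statement is the special case in which $x_0$ is an endpoint of such a segment. An alternative route giving the same exponent is to dominate $u$ near $x_0$ by a degenerate subsolution built from the low boundary values (a "needle" along $L$) and invoke the comparison principle Lemma \ref{lem:comparison principle 01}, but the section-volume dichotomy above is cleaner and uses only the lemmas already in hand.

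The step I expect to be the main obstacle is the lower bound on $|S_t|$. Obliqueness is genuinely essential there: a tangential segment would give a degenerate, lower-dimensional convex hull and no contradiction, consistent with the fact that $\partial\Omega$ may be flat in some tangent direction at $x_0$. One must also use honest $C^{1}$-flatness of $\partial\Omega$ at $x_0$ (not merely that $0\in\partial\Omega$) to guarantee that $\partial\Omega\cap B_{r_t}$ spans an $(n-1)$-disk of radius proportional to $r_t$ after projection to $T$. The other point requiring care is the normalization: extracting the clean inequality $u\le C|x|^{1+\alpha}$ on $\partial\Omega$ from the two separate pointwise $C^{1,\alpha}$ hypotheses (on $\partial\Omega$ and on $\varphi$) together with $u\ge0$.
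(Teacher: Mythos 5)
Your proof is correct, and it is the standard Caffarelli argument behind this lemma; note that the paper itself gives no proof but only cites \cite{[JX2]} (Lemma 3.4 there), which runs along essentially the same lines as yours: the upper bound $|S_t|\le Ct^{n/2}$ from $\det D^2u\ge\lambda$ (your use of Lemma \ref{lem:size section 3} is legitimate, since no internality of the section is needed there) against the lower bound $|S_t|\ge c\,t^{(n-1)/(1+\alpha)}$ from the cone over the boundary patch where the data is $O(|x|^{1+\alpha})$, with $\alpha>1-\frac{2}{n}$ exactly making the exponents clash. Two small points are worth writing out explicitly: the reduction $0\le u\le C|x|^{1+\alpha}$ on $\partial\Omega$ near $x_0$ uses that, by convexity and the pointwise $C^1$ regularity, $\partial\Omega$ is a graph over a full neighborhood of $x_0$ in the tangent plane, so nonnegativity kills the tangential linear part while the $C^{1,\alpha}$ flatness of $\partial\Omega$ absorbs the normal one; and for the last sentence of the lemma one should observe that any segment $L\subset\Omega$ with endpoint $x_0$ is automatically oblique, because interior points of $\Omega$ lie strictly on one side of the supporting (tangent) plane at $x_0$, so the "hence" clause is indeed the case your contradiction covers.
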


		\begin{Example}\label{exa:non-dege}
			In the following steps $(1)-(5)$, we  recall the functions $v$ and $w$, and the set $S_{\delta}$ from  Mooney \cite[Section 4]{[Mo1]}.
			\begin{enumerate}
				\item Let $\epsilon>0$ be a small constant, $\gamma=1-2^{-3 \epsilon}$ and $	\delta=\frac{2 \epsilon}{1+3 \epsilon}$. We  construct a standard cantor set
				\[
				S_{\delta}=\left[-\frac{1}{2}, \frac{1}{2}\right]-\bigcup_{i, k} I_{i, k}
				\]
				as follows: for each $k \geq 0$, let $\left\{s_{i, k}\right\}_{i=1}^{2^{k}}$ denote the center of the remaining intervals, respectively; and we remove $2^k$ intervals $I_{i, k}$ centered at points $\left\{s_{i, k}\right\}_{i=1}^{2^{k}}$ with length $ \gamma 2^{-(1+3 \epsilon) (k-1)}$.
				
				\item Consider the function
				\[v(s)=\sum_{k=1}^{\infty} \sum_{i=1}^{2^{k-1}} 2^{-2(1+2 \epsilon) k} v_0\left(2 \gamma^{-1} 2^{(1+3 \epsilon) k}\left(s-s_{i, k}\right)\right),\]
				where
				\[v_0(s)=
				\begin{cases}
					|s|, & |s| \leq 1, \\
					2|s|-1, & |s|>1.
				\end{cases}\]
				For any point $s_0 \in S_{\delta}$,  $v$ separates from its supporting hyperplane function faster than $c_0|s-s_0|^{2-\delta}$ for some $c_0 >0$.
				
				\item Let $p,p'>1$ satisfy $\frac{1}{p}+\frac{1}{p'}=\frac{3}{2}$ with $p=2-\delta$. Let
				\[\Omega= \left(-1,1 \right ) \times  \left(-1,1 \right ) \times  \left(-1,1 \right )  \in \R^3 .\]
				There are positive constants $a$ and $b$, and a convex function of the type
				\[w\left(x_1, x_2, x_3\right)=\left(x_1^p+x_2^{p'}\right) f\left(x_3\right)  \text{ for } f(x_3)=a+bx_2^2\]
				such that
				\[ \det D^2 w \geq 1 \text{ in } \Omega.\]
				
				\item For each large $t>t_0$, we solve the following Dirichlet problem
				\[\operatorname{det} D^2 u(\cdot, t)=1   \text { in } \Omega, u(x_1,x_2,x_3, t) =v\left(x_1\right)+t \left|x_2\right|  \text { on } \partial \Omega.  \]
				We then use $w$ to construct a lower barrier function at each point on
				\[E_{\delta}= S_{\delta}  \times \{0\} \times  \left(-1,1 \right ) , \]
				and prove that $u(\cdot, t)$ is linear in the direction of $x_3$ on $ E_{\delta}$. Therefore,
				\[  E_{\delta} \subset \Sigma_{u(\cdot, t)}  \text{ for each } t \geq t_0.\]
				Applying Lemmas \ref{lem:Pogorelov Line} and \ref{lem:strictly convexpogorelov},  we find that
				\[ \Sigma_{u(\cdot, t)} \subset (-1,1)  \times \{0\} \times  (-1,1). \]
				\item If $n > 3$, we can consider the family of functions
				\[u \left(x_1, x_2, x_3,t\right)+x_4^2+\cdots+x_n^2.\]

				\item For each $t_2 > t_1 > t_0$, we have  $u(\cdot, t_1) \leq u(\cdot, t_2)$ in $\Omega$, and $u(\cdot, t_1) = u(\cdot, t_2)$ on $(-1,1)  \times \{0\} \times  \{\pm 1\}$.  By Lemma \ref{lem:Pogorelov Line}, we have
				\[    E_{\delta} \subset \Sigma_{u(\cdot, t_1)} \subset \Sigma_{u(\cdot, t_2)} \subset (-1,1)  \times \{0\} \times  (-1,1). \]
				Note that since $ u(\cdot,t_1) $ and $ u(\cdot,t_2) $ are different somewhere, we conclude that
				\[  E_{\delta} \subset \Sigma_{u(\cdot, t_1)} \cap \Sigma_{u(\cdot, t_2)} =\{x \in \Omega:\;  u(\cdot, t_1)=u(\cdot, t_2)\} \subsetneq \Omega  .\]
				
			\end{enumerate}
		\end{Example}

		\begin{Remark}\label{rem:non-dege}
			In addition to Example \ref{exa:non-dege}, consider the set
			\[ J_{\delta}= \bigcup_{i,k}\{s_{i, k},s_{i, k}-\gamma2^{-1-(1+3 \epsilon) k},s_{i, k}+\gamma2^{-1-(1+3 \epsilon) k}\}.\]
			Then, $J_{\delta}$ is discrete and all the limit points of $J_{\delta}$ are in $S_{\delta}$.   One  can check for all $u(\cdot, t)$ defined in Example \ref{exa:non-dege} the following
			\begin{equation}\label{eq:Mooney function}
				\Sigma_{u(\cdot, t)} =\left(S_{\delta} \cup J_{\delta}\right)  \times \{0\} \times  \left(-1,1 \right),
			\end{equation}
			by verifying steps (1)-(3) below:
			\begin{enumerate}
				\item For any point $s_0 \in  S_{\delta} \cup J_{\delta}$,  $v$ separates from its supporting hyperplane function faster than $c_0|s-s_0|^{2-\delta}$ for some $c_0 >0$.  Then, we have
				\[\left(S_{\delta} \cup J_{\delta}\right)  \times \{0\} \times  \left(-1,1 \right) \subset \Sigma_{u(\cdot, t)} . \]
				\item  Suppose that point
				\[Z=(z_1,z_2,z_3) \in    \left(S_{\delta} \cup J_{\delta}\right)  \times \{0\} \times  \left(-1,1 \right)  \text{ but } Z \notin \Sigma_{u(\cdot, t)}.\]
				By Lemmas \ref{lem:Pogorelov Line} and \ref{lem:strictly convexpogorelov}, $Z$ is in a segment $L$ on which $u$ is linear, and the endpoints of $L$ are on $(-1,1)  \times \{0\} \times  \{ \pm 1 \} $. Thus, $z_1 \notin S_{\delta} \cup J_{\delta}$.
				\item $S_{\delta} \cup J_{\delta}$ is a closed set. We may assume that $z_1 \in (s_1,s_2) \subset [-1,1]\setminus (S_{\delta} \cup J_{\delta})$ for points $s_1,s_2 \in S_{\delta} \cup J_{\delta}$. Consider the rectangle
				\[  E=(s_1,s_2) \times \{0\} \times  \left(-1,1 \right)\]
				Now, $u(\cdot, t)$ is a convex function, and $u$ is linear on the  $\{Z\} \cup \partial E$. Thus, $u$ is linear on $E$. This contradicts Lemma \ref{lem:Pogorelov Line}, which states that $\dim E < \frac{3}{2}$. In conclusion, we have proved  \eqref{eq:Mooney function}.
			\end{enumerate}
		\end{Remark}
		
		The next two examples concern degenerate cases.
		\begin{Example}\label{exa:2 SW}
			Sawyer and Wheeden \cite{[SW]} introduced the following convex function
			\[w(y, s)=y^{2}\zeta(s) \text{ for } (y,s) \in B_{c_0}(0) \subset \R^2,\]
			where $c_0$ is small, and  $\zeta$ solves the differential equation
			\[2\zeta\zeta^{\prime \prime}-4\zeta'^{2}=1, \zeta(0) >0, \zeta'(0)=0.\]
			In this example, we have
			\[ \det D^2 w = y^{2}.\]
			And $\Sigma_w$ is exactly on the $s$-axis.
			
			For each fixed $t>t_0$, where $t_0$ is large, the function
			\[w  (y,s,t) =  t^{-1}w(y,ts)=t^{-1}y^{2}\zeta(t s )\]
			satisfies
			\[ \det D^2 w (\cdot ,t) = y^{2}.\]
			Away from the $s$-axis, we have $w(\cdot ,t) > w(\cdot,1)$ somewhere. Thus, $w(\cdot ,t) $ only touches $w(\cdot,1)$ on the $s$-axis, where they have the value $0$.
			
			When $n >3$, we write points in $\R^n$ as $x=(x'',x_{n-1},x_n  )$.
			Let
			\[v(x)=w\left(x_{n-1}, x_n,1\right)+|x''|^2\]
			and
			\[ u(x)= \begin{cases}
				w\left( x_{n-1},x_n,t_0\right)+|x''|^2   & \text{ if }  x_{n} \geq 0,\\
				w\left( x_{n-1}, x_n,1\right)+|x''|^2  &
				\text{ if } x_{n} \leq 0.
			\end{cases}\]
			Then,  $\Sigma_u=\Sigma_v=\{ x_n= 0\}$, and the coincidence set is $\{ x_n \leq 0\}$.
		\end{Example}
		
		Inspired by Example \ref{exa:2 SW}, we can further construct function $u$ such that $\Sigma_u$ is flat along exactly one direction.
		\begin{Example}\label{exa:dege}
			Consider the function
			\[ w(x)=\frac{1}{2}\left[\left( x_{n}-\frac{1}{2}|x''|^2\right)^2\zeta(x_{n-1})+|x''|^2\right] \text{ for } \ x=\left(x'',x_{n-1},x_n\right) \in \R^n. \]
			Then we have
			\[ \det D^2 w =\left(\frac{1}{2} \zeta \zeta''-\zeta'^2\right)^2 \cdot  \left(1-\zeta W \right)^{n-2} W^2, \ \text{ where  }W (x) =x_{n}-\frac{1}{2}|x''|^2.\]
			Solving the equation
			\[\frac{1}{2} \zeta \zeta''-\zeta'^2=1 , \zeta(0)=1,  \zeta'(0)=0,\]
			we find that $w$ is convex  and
			\[ \det D^2 w\approx \left(x_{n}-\frac{1}{2}|x''|^2\right)^2 \text{ in } B_{c_0}(0)\]
			provided that $c_0>0 $ is small. Let $c_1>0$ be small, and let $\tilde{w}$ be the convex solution to
			\[ \det D^2 \tilde{w}= c_1 \left(x_{n}-\frac{1}{2}|x''|^2\right)^2 \text{ in } B_{c_0}(0) ,\quad \tilde{w} =w \text{ on } \partial B_{c_0}(0) .\]
			By comparison principle, $\tilde{w} \geq w$. Combining with the convexity of $\tilde{w}$, we see that
			\begin{equation}\label{eq:example v growth}
				\frac{1}{2}\left[c_2	\left( x_{n}-\frac{1}{2}|x''|^2\right)^2+|x''|^2\right]\leq \tilde{w}(x ) \leq \frac{1}{2}\left[C_2	\left( x_{n}-\frac{1}{2}|x''|^2\right)^2+|x''|^2\right]
			\end{equation}
			for some positive constants $c_2$ and $C_2$. Then,
			\[\tilde{w}=w=\frac{1}{2}|x'|^2 =  x_n \text{ on } E_0=\left\{x_n =\frac{1}{2}|x'|^2 \right\} \]

			We claim that
			\[ \Sigma_{\tilde{w}}= E_0 .\]
			Suppose to the contrary that there exists a point $X \in \Sigma_{\tilde{w}} \setminus E_0$. According to Lemma \ref{lem:Pogorelov Line}, $X$ is in a convex set $E$ on which $u$ is linear, and all extremal points of $E$ are on $E_0\cup  \partial B_{c_0}(0)  $.  Note that $ \tilde{w} \in C^{2}(\partial B_{c_0}(0))$ and $\det D^2 \tilde{w}>0$ outside $E_0$, applying Lemma \ref{lem:strictly convexpogorelov}, we find that all extremal points of $E$ are on $E_0$. Therefore, $\tilde{w}=x_n$ on $E$. By \eqref{eq:example v growth}, we have $X \in E \subset E_0$, which contradicts with $X \in  \Sigma_{\tilde{w}} \setminus  E_0$.

			For each large $t>t_0$, the function
			\[w (x,t) = t^{2} \tilde{w}( t^{-1}x'', t^{2} x_{n-1}, t^{-2}x_n) \]
			solves
			\[\det D^2  w(\cdot,t)=c_1 \left(x_{n}-\frac{1}{2}|x''|^2\right)^2,\]
			and
			\[ w(\cdot,t) \geq 	\frac{1}{2}\left[ct^{-2}	\left( x_{n}-\frac{1}{2}|x''|^2\right)^2+|x''|^2\right] \geq  \tilde{w}(x).   \]
			Thus, $w(\cdot,t) \geq v$, and $w(\cdot,t) $ only touches $v$ on $E_0$.   Let
			\[v(x)=w\left(x,1\right) \]
			and
			\[ u(x)= \begin{cases}
				w\left( x,t_0\right)   &  x_n  \geq \frac{1}{2}|x''|^2,\\
				w\left( x,1\right) & x_n  \leq \frac{1}{2}|x''|^2.
			\end{cases}\]
			Then,  $\Sigma_u=\Sigma_v=E_0$ while the coincidence set $\{x:\;u(x)=v(x)\}$ equals $\{ x_n \leq \frac{1}{2}|x''|^2\}$.
		\end{Example}

%

		\par\bigskip\noindent
	\textbf{Acknowledgments.}	The authors are
	grateful to Professor Tianling Jin for discussions and suggestions on the earlier version of this paper. We also thank Professors Jian Lu and Xu-Jia Wang for their helpful comments on this topic.

	\medskip
	
	\noindent \textbf{Conflict of interest:} All authors certify that there is no actual or potential conflict of interest about this article.

		\bigskip
		
		\noindent H. Jian
		
		\noindent Department of Mathematical Sciences, Tsinghua University\\
		Beijing 100084, China\\[1mm]
		Email: \textsf{hjian@tsinghua.edu.cn}

		\medskip
		
		\noindent X. Tu
		
		\noindent Department of Mathematics, The Hong Kong University of Science and Technology\\
		Clear Water Bay, Kowloon, Hong Kong\\[1mm]
		Email:  \textsf{maxstu@ust.hk}
		
	\end{document}